\newcommand{\nicecolor}{Navy}  
\setlist[1]{wide}
\setlist[2]{leftmargin=15mm} 
\setlist[enumerate]{label=\rm{(\arabic*)}}
\setlist[enumerate,2]{label=\rm({\it\roman*}), }
\setlist[itemize]{label=\raisebox{0.25ex}{\tiny$\bullet$}}
\newtheorem{mytheorem}{Theorem}
\newtheorem{lemma}{Lemma}[section]
\newtheorem{corollary}[lemma]{Corollary}
\newtheorem{theorem}[lemma]{Theorem}
\newtheorem{proposition}[lemma]{Proposition}
\theoremstyle{definition}
\newtheorem{definition}[lemma]{Definition}
\newtheorem{notation}[lemma]{Notation}
\theoremstyle{remark}
\newtheorem{remark}[lemma]{Remark}
\newtheorem{example}[lemma]{Example}
\newcommand\C{{\mathbb C}}
\newcommand\GL{{\mathrm{GL}}}
\newcommand\R{{\mathbb R}}
\newcommand\A{{\mathbb A}}
\newcommand\Z{{\mathbb Z}}
\newcommand\iso{\stackrel{\simeq}{\longrightarrow}}
\newcommand{\Aut}{\mathrm{Aut}}
\newcommand{\Bir}{\mathrm{Bir}}
\newcommand{\End}{\mathrm{End}}
\newcommand{\Jac}{\mathrm{Jac}}
\newcommand{\Spec}{\mathrm{Spec}}
\newcommand{\BA}{\mathrm{BA}}
\newcommand{\Aff}{\mathrm{Aff}}
\newcommand{\Gal}{\mathrm{Gal}}
\newcommand{\id}{\mathrm{id}}
\newcommand{\KK}{\mathbf{K}}
\newcommand{\kk}{\mathbf{k}}
\newcommand{\I}{\mathbf{i}}
\newcommand{\e}{\mathrm{e}}
\title[Gizatullin surfaces and Koras-Russell threefolds]{Real forms of some Gizatullin surfaces and Koras-Russell threefolds}
\author[J.~Blanc]{J\'er\'emy~Blanc}
\address{J\'er\'emy Blanc, Universit\"{a}t Basel, Departement Mathematik und Informatik, Spiegelgasse $1$, CH-$4051$ Basel, Switzerland.}
\email{jeremy.blanc@unibas.ch}
\urladdr{http://algebra.dmi.unibas.ch/blanc}
\author[A.~Bot]{Anna~Bot}
\address{Anna Bot, Universit\"{a}t Basel, Departement Mathematik und Informatik, Spiegelgasse $1$, CH-$4051$ Basel, Switzerland.}
\email{annakatharina.bot@unibas.ch}
\urladdr{http://algebra.dmi.unibas.ch/bot}
\author[P.-M.~Poloni]{Pierre-Marie~Poloni}
\address{Pierre-Marie~Poloni, Universit\"{a}t Basel, Departement Mathematik und Informatik, Spiegelgasse $1$, CH-$4051$ Basel, Switzerland.}
\email{ poloni.pierremarie@gmail.com}
\urladdr{http://algebra.dmi.unibas.ch/poloni}
\thanks{The first two authors acknowledge support by the Swiss National Science Foundation Grant \textquotedblleft Geometrically ruled surfaces\textquotedblright 200020--192217.}
\begin{document}

\subjclass[2010]{14R05, 14R20, 20J06, 14J50, 14P05, 14P99, 14J26}
\keywords{real forms, Gizatullin surfaces, Danielewski surfaces, Koras-Russell threefolds, group cohomology}

\begin{abstract}
We describe the real forms of Gizatullin surfaces of the form $xy=p(z)$ and of Koras-Russell threefolds of the first kind. The former admit zero, two, three, four or six isomorphism classes of real forms, depending on the degree and the symmetries of the polynomial~$p$. The latter, which are threefolds given by an equation of the form $x^dy+z^k+x+t^\ell=0$, all admit exactly one real form up to isomorphism. 
\end{abstract}

\maketitle 
\tableofcontents

\section{Introduction} 

Given a complex algebraic variety $X$, a \textit{real form} of $X$ is a real algebraic variety $Y$ whose complexification is isomorphic to $X$. It is then natural to ask whether $X$ has  one, only one, finitely many or infinitely many isomorphism classes of real forms. We study here the case where $X$ is affine. The most natural examples to look at in this context are the affine spaces. For any $n \geq 1$, an obvious real form of $\A^n_\C$ is $\A^n_\R$. For $n\le 2$, it turns out to be the only one up to isomorphisms. This is a nice exercise for $n=1$, and for $n=2$ it is a result of Kambayashi in \cite[Theorem 3]{Kambayashi} based on the amalgamated free product structure of $\Aut(\A^2_\C)$. For $n\ge 3$, it is still unknown whether $\A^n_\C$ admits any nontrivial real form.

In this article, we investigate some affine surfaces and threefolds which are close to the affine plane and space.

Recall that a \emph{Gizatullin surface}  is a normal complex affine surface completable by a \emph{zigzag}, that is, by a simple normal crossing divisor with rational components and a linear dual graph, see for more details \cite{FKZ}.  These surfaces are classical generalisations of the affine plane. For instance,  a smooth affine surface is quasi-homogeneous (that is, its automorphism group admits an open orbit with finite complement) if and only if it is a Gizatullin surface or isomorphic to $(\A^1_\C\setminus \{0\})^2$, see \cite{GizatullinQH}. Moreover, by \cite[Theorem]{DuboulozML}, a normal complex affine surface admits two $(\C,+)$-actions with different general fibres if and only if it is a Gizatullin surface not isomorphic to $\A^1_\C\times (\A^1_\C\setminus \{0\})$. In the latter case, the zigzag can be chosen to have a sequence of self-intersections $(0,-1,-a_1,\ldots,-a_r)$, with $a_1,\ldots,a_r\ge 2$ (see for instance \cite{BD}). 

The case   $r=0$ is the affine plane $\A^2_\C$. The case  $r=1$ corresponds to the surfaces  $D_p=\Spec(\C[x,y,z]/(xy-p(z)))$, where $p\in \C[z]$ is of degree at least~$2$, called \emph{Danielewski surfaces} by some authors. For $r=2$, there are Gizatullin surfaces with uncountably many nonisomorphic real forms, as the second author recently proved in \cite{Bot}. 
In this text, we compute the number of isomorphism classes of real forms of all surfaces $D_p$, and show in particular that this number is finite for all of them. 

We first establish in Proposition~\ref{Prop:ExistencerealstructureDanielewski} that  $D_p$ admits a real form if and only if there exist $a,\lambda \in \C^*$, $b\in \C$, such that $\lambda p(az+b)\in \R[z]$. In this case, we can assume that $p\in \R[z]$, and moreover that $p$ is in \emph{reduced} form as defined in Definition~\ref{Defi:ReducedForm}, i.e., that $p(z)=z^d+s(z)$ for some integer $d$ and some polynomial $s\in\R[z]$ with $\deg(s)\leq d-2$. We then obtain the full list of isomorphism classes of real forms for any such surface in Propositions~\ref{Prop:RealformsDanielewski2a},~\ref{Prop:RealformsDanielewski2b} and~\ref{Prop:RealformsDanielewski3}, summarised as follows:

\begin{mytheorem}\label{ThmA}
Let $p\in \R[z]$ be a polynomial of degree $d\ge 2$ in reduced form. Write $p(z)=z^mq(z^n)$ where $m\ge 0$, $n\ge 1$, $q\in \R[z]$, $q(0)\neq 0$ and where $q,n$ are chosen such that $n$ is  maximal if $q\neq 1$. For all $a,b,c\in \{0,1\}$, the surface
\[S_{abc}=\Spec(\R[x,y,z]/(x^2+(-1)^a y^2+(-1)^b z^mq((-1)^c z^n)))\]
is a real form of the Gizatullin surface $D_p=\Spec(\C[x,y,z]/(xy-p(z)))$. Moreover, the number~$i$ of isomorphism classes of real forms of $D_p$ and the representatives are related as follows.
\[\begin{array}{|l|l|l|l|}
\hline
i & \text{Representatives} & \multicolumn{2}{c|}{\text{Conditions on }q,n,d}\\
\hline
2& S_{000}, S_{110} & q=1,d=2 & q=1,d\ge 3\text{ odd}\\ \hline
3 &S_{000},S_{010},S_{110}& q=1,d\ge 4\text{ even}&q\neq 1, n\text{ odd}\\ \hline
4 &S_{abb},a,b\in \{0,1\} & q\neq 1, n\text{ even}, d\text{ odd}& q\neq 1, (n,d)=(2,2)
 \\ \hline
6 &S_{00c},S_{a1c},a,c\in \{0,1\}& \multicolumn{2}{l|}{ q\neq 1, n,d \text{ both even}, (n,d)\neq (2,2)}\\
\hline
\end{array}\]
\end{mytheorem}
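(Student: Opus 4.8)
The plan is to prove the two halves of the statement in turn: that each $S_{abc}$ is a real form of $D_p$, and that the listed surfaces are exactly the isomorphism classes of real forms. For the first half I would check that $S_{abc}\otimes_\R\C\cong D_p$ by diagonalising the leading binary quadratic form over $\C$: setting $u=x+\I y,\ v=x-\I y$ when $a=0$ and $u=x+y,\ v=x-y$ when $a=1$, the complexification becomes $\Spec\C[u,v,z]/(uv-(-1)^{b+1}z^mq((-1)^cz^n))$. Choosing $\alpha\in\C^*$ with $\alpha^n=(-1)^c$ and substituting $z=\alpha w$ turns $q((-1)^cz^n)$ into $q(w^n)$, so the right-hand side becomes $(-1)^{b+1}\alpha^m p(w)$; absorbing this nonzero scalar by rescaling $v$ identifies the surface with $D_p$. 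This settles the first assertion for all $a,b,c$.

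For the count I would use that, since $p\in\R[z]$ supplies a base real structure, the isomorphism classes of real forms of $D_p$ form the pointed set $H^1(\Gal(\C/\R),\Aut(D_p))$, i.e.\ the $\Aut(D_p)$-conjugacy classes of antiregular involutions. The key structural input is that every automorphism preserves the Makar--Limanov invariant $\C[z]$ and hence induces a map on $\Spec\C[z]=\A^1_\C$; reducedness of $p$ forces this to be a scaling $z\mapsto\alpha z$ with $p(\alpha z)\in\C^*\cdot p(z)$, and the maximality of $n$ identifies the group of such $\alpha$ with $\C^*$ when $q=1$ and with $\mu_n$ when $q\neq1$. Together with the torus $(x,y)\mapsto(sx,s^{-1}y)$ and the ruling-swap $x\leftrightarrow y$, these generate the relevant part of $\Aut(D_p)$, the two $\mathbb{G}_a$-actions forming a normal subgroup that contributes nothing to the first cohomology, so that the computation of $H^1$ reduces to this finite/toric symmetry data.

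The concrete bookkeeping then comes from recording the real isomorphisms among the $S_{abc}$. The swap $x\leftrightarrow y$ (with an overall sign change of the equation) gives $S_{1bc}\cong S_{1,b+1,c}$, so $b$ becomes irrelevant once $a=1$; the reflection $z\mapsto -z$ gives $S_{abc}\cong S_{a,\,b+m,\,c+n}$ with indices taken mod $2$, which collapses $c$ exactly when $n$ is odd and ties $b$ to the parity of $m$ (equivalently, when $n$ is even, to the parity of $d=m+n\deg q$); and for $q=1$ the full real scaling of $z$ additionally flips $b$ whenever $d$ is odd. Feeding these relations through the parity cases $q=1$ versus $q\neq1$ and $n,d$ even/odd reproduces the counts $3$, $4$ and $6$ and pins down the stated representatives. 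The two entries with $d=2$ are exceptional because $D_p$ is then an affine quadric -- a cone when $q=1$, a smooth quadric when $(n,d)=(2,2)$ -- whose exceptionally large automorphism group forces extra identifications, lowering the counts to $2$ and $4$ respectively.

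The remaining and hardest step is to show the representatives are pairwise non-isomorphic and that no further identifications occur, and I expect this to be the main obstacle. It requires pinning down $\Aut(D_p)$ precisely enough -- including the exceptional symmetries in degree $2$ -- to compute $H^1(\Gal(\C/\R),\Aut(D_p))$ and verify its cardinality equals $i$, or, more geometrically, to separate the classes by invariants of the real loci $S_{abc}(\R)$ (the definite versus split leading form distinguishing $a$, and the topology of the real locus distinguishing the signs $b$ and the reflection $c$). Since $\Aut(D_p)$ is infinite-dimensional, the crux is justifying the reduction of the cohomology to the toric/finite symmetry group before the elementary parity bookkeeping above can be trusted.
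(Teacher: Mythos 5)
Your first half (each $S_{abc}$ is a real form, via diagonalising the quadratic part and rescaling $z$ by an $n$-th root of $(-1)^c$) is correct and is essentially what the paper does, and your parity bookkeeping of the real isomorphisms among the $S_{abc}$ (the swap $x\leftrightarrow y$ when $a=1$, the reflection $z\mapsto -z$, the extra scaling when $q=1$) matches the identifications used in the paper's proof of the theorem. The problem is the step you yourself flag as the crux, and there the proposal contains a genuine error and a genuine gap. The claim that every automorphism of $D_p$ preserves ``the Makar--Limanov invariant $\C[z]$'' is false: for $\deg(p)\ge 2$ the surface $D_p$ carries two independent $\mathbb{G}_a$-actions (one with general orbits in the fibres of $x$, one in the fibres of $y$), so its Makar--Limanov invariant is $\C$, not $\C[z]$, and automorphisms such as $(x,\,y+\tfrac{p(z+x)-p(z)}{x},\,z+x)$ visibly do not preserve $\C[z]$. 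Likewise, the subgroup generated by the two $\mathbb{G}_a$-actions is not normal with a symmetry-group quotient in any evident way, and even if it were, ``contributes nothing to the first cohomology'' would require controlling $H^1$ of all its twists. So the proposed mechanism for reducing $H^1(\Aut_\C(D_p))$ to the finite/toric symmetry data does not work as stated.

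What the paper actually uses in place of this is the amalgamated product structure $\Aut_\C(D_p)=A_\C(p)\Asterisk_{\cap}B_\C(p)$ (Theorem~\ref{thm: BD11, autos of A1-fibred affine surfaces}), where $A_\C(p)$ is the group of restrictions of affine automorphisms and $B_\C(p)$ is the group preserving the fibration $(x,y,z)\mapsto x$. Kambayashi's theorem then makes the square of $H^1$'s cocartesian, and an explicit cocycle computation (Lemma~\ref{Lemm:H1B-new}) shows that every $1$-cocycle in $B_\C(p)$ is equivalent to one in $A_\C(p)\cap B_\C(p)$ and that equivalence in $B_\C(p)$ implies equivalence in the intersection; this is what legitimises the reduction $H^1(\Aut_\C(D_p))\cong H^1(A_\C(p))$. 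After that, $H^1(A_\C(p))$ is computed case by case: for $\deg(p)\ge 3$ via $A_\C(p)=(A\cap B)\rtimes\langle(y,x,z)\rangle$ and the cohomology of $H_p$ (Lemmas~\ref{AcapBdeg3}, \ref{lemma:Hp}, \ref{Lemm:H1A}), and for $\deg(p)=2$ via the identifications $A_\C(p)\cong\PGL_2(\C)\times\Z/2$ resp.\ $\PGL_2(\C)\times\C^*$ and $H^1(\PGL_2(\C))=\{1,[M]\}$ (Lemmas~\ref{PGL2Z2}--\ref{Az2}) --- your remark that the degree-$2$ cases are exceptional because of the larger automorphism group is correct, but the extra identifications again have to be extracted from an actual $H^1$ computation, not just from the bookkeeping. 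Without some substitute for Lemma~\ref{Lemm:H1B-new} and Lemma~\ref{Lemm:H1A}, your argument only bounds the number of real forms from above and cannot certify that the listed representatives are pairwise non-isomorphic (the real-locus invariants you mention separate some but not all classes once $\deg p\ge 3$, e.g.\ the two forms indexed by $c$ when $n$ and $d$ are even).
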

 
Just as for the affine plane, the automorphism group of a surface  $D_p$  has the structure of a free product of two subgroups  amalgamated over their intersection (see Theorem~\ref{thm: BD11, autos of A1-fibred affine surfaces} below, or \cite[Theorem~5.4.5]{BD}). The situation is however more complicated than for $\A^2_\C$, since the cohomology pointed sets of the two factors are not trivial.

In the particular case of the affine quadric $\Spec(\R[x,y,z]/(xy-z^2+1))$, Theorem~\ref{ThmA} provides exactly four isomorphism classes of real forms, given by  $\Spec(\R[x,y,z]/(x^2\pm  y^2+ z^2\pm 1))$. This rectifies a similar claim in the introduction of \cite{DFM}, where only three of the four real forms were given. 

To complete our study of real forms of affine surfaces, we consider in Section~\ref{CCC} the surfaces $(\A^1_\C\setminus \{0\})^2$ and $\A^1_\C\times (\A^1_\C\setminus \{0\})$ mentioned above. We prove that they admit six and four isomorphism classes of real forms, respectively.

Following the examination in dimension two, we move to the study of three-dimensional affine varieties. We investigate the Koras-Russell threefolds of the first kind in Section~\ref{Section:KR}. We recall that they are defined as the hypersurfaces 
\[X_{d,k,\ell}=\{x^dy+z^k+x+t^\ell=0\}\subset\A^4_{\C},\]
where $d\geq2$ and $2\leq k<\ell$ are integers with $k$ and $\ell$ relatively prime, and that they are all smooth affine contractible, hence diffeomorphic to $\R^6$ when equipped with the euclidean topology \cite{Dimca}. They are furthermore $\A^1_{\C}$-contractible in the $\A_{\C}^1$-homotopy sense \cite{Dubouloz-Fasel}. Nevertheless,  none of them is  isomorphic to $\A^3_{\C}$ as an algebraic variety \cite{ML-cubique, Kaliman-ML}. We also recall that two important questions about them are still wild open   for all $d,k,\ell$: It is not known whether $X_{d,k,\ell}$ is biholomorphic to $\A^3_{\C}$, nor whether its cylinder $X_{d,k,\ell}\times\A^1_{\C}$ is isomorphic to $\A^4_{\C}$ (algebraically or analytically).

We prove in Section~\ref{section:KR} that every Koras-Russell threefold of the first kind admits no nontrivial real forms. 
\begin{mytheorem}\label{ThmB}
For all integers $d,k,\ell$ with $d\geq2$ and $2\leq k<\ell$  with $k$ and $\ell$ relatively prime, every real form of the Koras-Russell threefold \[X_{d,k,\ell}= \Spec(\C[x,y,z,t]/(x^dy+z^k+x+t^\ell))\] is isomorphic to the real surface $\Spec(\R[x,y,z,t]/(x^dy+z^k+x+t^\ell))$.
\end{mytheorem}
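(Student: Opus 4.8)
The plan is to show that any real form of the Koras-Russell threefold $X_{d,k,\ell}$ must be isomorphic to the obvious one. A real form corresponds to a real structure, i.e.\ an antiregular involution $\sigma$ on $X_{d,k,\ell}$, and two real forms are isomorphic precisely when their real structures are conjugate by a (complex) automorphism. So the first and central step is to understand the group $\Aut(X_{d,k,\ell})$ well enough to classify real structures up to conjugacy. The key rigidity input, which I would invoke from the literature on Koras-Russell threefolds, is that these varieties are extremely non-homogeneous: the Makar-Limanov invariant is nontrivial, and in fact the coordinate $x$ (equivalently, the locus $\{x=0\}$) is canonically determined by the algebra of locally nilpotent derivations. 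Concretely, $x$ is a generator of the ring of absolute constants up to scalar, so every automorphism of $X_{d,k,\ell}$ preserves the function $x$ up to a scalar multiple, and more refined invariant-theoretic arguments pin down the shape of any automorphism.

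\smallskip

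First I would recall (or re-derive from Makar-Limanov's computation) the explicit description of $\Aut(X_{d,k,\ell})$: every automorphism must send $x\mapsto \alpha x$ for some $\alpha\in\C^*$, and then the remaining coordinates are constrained by the defining equation. Comparing the three monomials $z^k$, $x$, $t^\ell$ of distinct, coprime, weights forces the scalars acting on $z$ and $t$ to be roots of unity tied to $\alpha$, and the $x^dy$ term then determines the action on $y$; one also gets a triangular ``$y\mapsto y+$(polynomial)'' freedom coming from the locally nilpotent derivation $x^d\partial_y - k z^{k-1}\partial_z - \ell t^{\ell-1}\partial_t$ style relations. The upshot I expect is that $\Aut(X_{d,k,\ell})$ is a semidirect product of a torus-like group of scalar substitutions by a unipotent-type group, with the scalar part being essentially a one-dimensional torus (plus finite root-of-unity data). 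Getting this description cleanly, including the analogous statement for \emph{anti}regular maps, is the technical heart and the step I expect to be the main obstacle.

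\smallskip

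Next, using the structure of $\Aut$, I would analyze real structures. An antiregular involution $\sigma$ composed with complex conjugation on the ambient $\A^4_\C$ (via the standard real structure of the obvious real form, which sends each coordinate to its conjugate) yields a \emph{regular} automorphism $g=\sigma\circ\tau$, where $\tau$ is the standard real structure. Since $\tau$ is an involution, classifying real structures up to conjugacy is exactly the problem of computing the first nonabelian cohomology pointed set $H^1(\Gal(\C/\R),\Aut(X_{d,k,\ell}))$, i.e.\ the $\tau$-twisted conjugacy classes of elements $g$ satisfying the cocycle condition $g\cdot\tau(g)=\id$. Using the explicit semidirect-product description, I would compute this cohomology by the usual dévissage: the unipotent/additive part contributes trivial cohomology (it is a $\C$-vector-space-like object, on which $H^1$ of $\Z/2$ vanishes by the standard averaging/Hilbert~90 argument), and the torus part contributes $H^1(\Z/2,\C^*)=\C^*/N$ which, after accounting for the root-of-unity constraints and the fact that the relevant norm map is surjective, collapses to a single class. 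Assembling these via the cohomology exact sequence associated to the semidirect product then yields $|H^1|=1$.

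\smallskip

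Finally, the triviality of this cohomology set translates back to the statement that every real structure is conjugate to the standard one $\tau$, hence every real form is isomorphic to $\Spec(\R[x,y,z,t]/(x^dy+z^k+x+t^\ell))$. I would also check at the outset that this obvious candidate really is a real form, i.e.\ that $\tau$ is a genuine antiregular involution on $X_{d,k,\ell}$ whose complexification gives back $X_{d,k,\ell}$, which is immediate since the defining polynomial has real coefficients. The main obstacle throughout is the first step: extracting a sufficiently rigid and explicit normal form for (anti)automorphisms from the Makar-Limanov invariant theory, since once $\Aut$ is pinned down the cohomological computation is routine.
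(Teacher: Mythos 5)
Your overall framework is the same as the paper's: real forms correspond to classes in $H^1(\Gal(\C/\R),\Aut_\C(X_{d,k,\ell}))$, the automorphism group is taken from the literature (\cite{DMJP, Moser-Jauslin}), and one computes the cohomology by d\'evissage along the known filtration. However, there is a genuine gap at the point you dismiss as ``routine.'' The group $\Aut_\C(X_{d,k,\ell})\simeq\mathcal{A}$ is an extension of $\C^*$ by $\mathcal{A}_1$, and $\mathcal{A}_1$ carries a subnormal series $\mathcal{A}_d\triangleleft\cdots\triangleleft\mathcal{A}_1$ whose successive quotients $\mathcal{A}_n/\mathcal{A}_{n+1}$ for $1\le n\le d-1$ are indeed isomorphic to $(\C[z,t],+)$ and have trivial $H^1$ by averaging, exactly as you say. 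But the series does not terminate in the trivial group: its bottom term is
\[\mathcal{A}_d=\{f\in\Aut_{\C[x]}(\C[x,z,t])\mid f\equiv\mathrm{id}\bmod (x^d)\},\]
a large \emph{non-abelian} group (it contains, for instance, arbitrary automorphisms of the $(z,t)$-variables over $\C[x]$ that are congruent to the identity modulo $x^d$), and it is in no sense ``a $\C$-vector-space-like object'' on which $H^1$ vanishes by Hilbert~90. Proving $H^1(\mathcal{A}_d)=\{1\}$ is precisely the key step of the paper, and it occupies all of its Section~5.1: one first shows $H^1(G_{\C,z})=\{1\}$ for the group of automorphisms of $\A^3_\C$ fixing one coordinate (Proposition~\ref{AutR}), which uses Kambayashi's theorem over the field $\C(z)$, a descending induction on the denominators of the inverse map, the classification of variables of $\C[x,y]$ and Furter's theorem on the ind-closure of the set of variables; one then deduces $H^1(G_r)=\{1\}$ for the congruence subgroups by induction on $r$, using a lifting theorem for automorphisms over $\R[z]/(z^r)$ from \cite{EMV}. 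None of this is present, or even flagged as an obstacle, in your proposal; you locate the difficulty in pinning down $\Aut(X_{d,k,\ell})$, which is actually the part that is already available in the literature.

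A secondary, smaller inaccuracy: in the semidirect product $\mathcal{A}\simeq\mathcal{A}_1\rtimes\C^*$ the torus acts by $P(x,z,t)\mapsto P(a^{k\ell}x,a^{\ell}z,a^{k}t)$, so the scalars on $z$ and $t$ are genuine powers of the torus parameter rather than roots of unity tied to it; this does not affect the cohomological conclusion (since $H^1(\C^*)$ is trivial either way), but your ``unipotent-type group'' description of $\mathcal{A}_1$ substantially underestimates its size, which is exactly what leads to the gap above.
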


To  achieve this result, we  use the  structure of the automorphism group of the threefold $X_{d,k,\ell}$ as a subnormal series as computed in \cite{DMJP, Moser-Jauslin} (see Proposition~\ref{prop:Automorphisms_KR}). The factor groups being isomorphic to $\C^*$, $(\C[x,z],+)$ or $\{f\in\Aut_{\C[x]}(\C[x,z,t])\mid f\equiv\mathrm{id}\mod (x^d)\}$,  the key step in the proof of Theorem~\ref{ThmB} is then to  show that the first cohomology pointed set of this latter group is trivial  for any $d\geq 0$. Note that the triviality of this group for $d=0$ also implies that every real structure of $\A^3_\C$ compatible with the projection along one coordinate is equivalent to the standard real structure of $\A^3_\C$, see Proposition~\ref{AutR}.

\subsection*{Acknowledgements.} The authors thank Ronan Terpereau for interesting discussions related to real forms. 

\section{Notation, definitions and reminders}

\subsection{Polynomial maps and variables}

\begin{notation}
Let $n\geq1$ be an integer and $R$ be a commutative algebra over a field $\kk$. We denote by $\End(\A^n_{R})=\End_{R}(\A^n_{R})$ the monoid of  algebraic endomorphisms of $\A^n_{R}=\A^n_{\kk}\times_{\Spec(\kk)}\Spec(R)$. These are the morphisms
of the form \[f\colon (x_1,\ldots,x_n)\mapsto(f_1(x_1,\ldots,x_n),\ldots,f_n(x_1,\ldots,x_n)),\]
where $f_1,\ldots,f_n\in R[x_1,\ldots,x_n]$. As usual, we shall denote such a morphism simply by $f=(f_1,\ldots,f_n)$ and often replace the variables $x_1,x_2,x_3$ by $x,y,z$ if $n\le 3$. 

Given $f=(f_1,\ldots,f_n)\in\End(\A^n_{R})$, we denote by $f^*$ the corresponding $R$-algebra endomorphism of $R[x_1,\ldots,x_n]$ defined by $f^*(P)=P(f_1,\ldots,f_n)$   for all $P\in R[x_1,\ldots,x_n]$. In particular, $f^*(x_i)=f_i$ for $i=1,\ldots,n$. 
\end{notation}

\begin{notation}
We denote by $\Aut(\A^n_R)=\Aut_{R}(\A^n_R)$ the group of algebraic automorphisms of $\A^n_R$ over $\Spec(R)$, by 
\[\Aff_n(R)=\left\{f\in\Aut(\A^n_R)\mid \deg(f^*(x_i))=1 \textrm{ for all } 1\leq i\leq n\right\}\]
the subgroup of affine automorphisms and by 
\[\BA_n(R)=\left\{f\in\Aut(\A^n_R)\mid f^*(x_i)\in R[x_1,\ldots,x_i] \text{ for all } 1\leq i\leq n \right\}\] 
the subgroup of triangular automorphisms.
\end{notation}

We recall that, in dimension two, affine and triangular automorphisms generate all automorphisms of $\A^2_{\kk}$ for any field $\kk$. Moreover, $\Aut(\A^2_{\kk})$ has then the structure of an amalgamated product.

\begin{theorem}[Jung--van der Kulk Theorem]\cite[Theorem 2]{Kambayashi}\label{JungvdK}
Let $\kk$ be a field. Then, the group $\Aut(\A^2_{\kk})$ is the free product 
\[\Aut(\A^2_{\kk})=\Aff_2(\kk)\Asterisk_{\cap}\BA_2(\kk)\] of its affine and triangular subgroups
amalgamated over their intersection.
\end{theorem}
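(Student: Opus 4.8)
The plan is to prove the two assertions that together constitute the statement: first, that the affine subgroup $\Aff_2(\kk)$ and the triangular subgroup $\BA_2(\kk)$ generate all of $\Aut(\A^2_{\kk})$; and second, that the only relations among them are those forced by their common subgroup $\Aff_2(\kk)\cap\BA_2(\kk)$, so that the natural surjection from the amalgamated product $\Aff_2(\kk)\Asterisk_{\cap}\BA_2(\kk)$ onto $\Aut(\A^2_{\kk})$ is also injective.

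For the generation statement I would argue by induction on the degree $\deg(f)=\max(\deg f^*(x),\deg f^*(y))$ of an automorphism $f$. If $\deg(f)=1$ then $f\in\Aff_2(\kk)$ and there is nothing to do. Otherwise, write $P=f^*(x)$ and $Q=f^*(y)$, let $\bar P,\bar Q$ be their top-degree homogeneous parts of degrees $p$ and $q$, and use that $f$ being an automorphism forces the Jacobian determinant $\Jac(f)=P_xQ_y-P_yQ_x$ to be a nonzero constant. Its top-degree part is the homogeneous polynomial $\bar P_x\bar Q_y-\bar P_y\bar Q_x$ of degree $p+q-2$; since $\max(p,q)\geq 2$ gives $p+q-2\geq 1$, this part must vanish, so $\bar P$ and $\bar Q$ are algebraically dependent. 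In two variables this yields a proportionality of powers $\bar P^{\,q/g}=\lambda\,\bar Q^{\,p/g}$ with $g=\gcd(p,q)$, which (using that $f$ is genuinely bijective, not merely Jacobian-constant) upgrades to the van der Kulk statement that one degree divides the other, say $q\mid p$, with $\bar P=c\,\bar Q^{\,p/q}$ for some $c\in\kk^*$. Composing $f$ with the triangular automorphism $(x-c\,y^{\,p/q},\,y)\in\BA_2(\kk)$ then cancels the leading term and strictly lowers the degree, so the induction closes.

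For the amalgamation statement I would show that a nonempty reduced word $w=g_1g_2\cdots g_n$, alternating between nontrivial cosets of $\Aff_2(\kk)$ and of $\BA_2(\kk)$ modulo $\Aff_2(\kk)\cap\BA_2(\kk)$, never equals $\id$. The tool is again the degree, via a no-cancellation estimate: composing an automorphism with a triangular factor of degree $e\geq 2$ lying outside the intersection multiplies the degree, because the relevant leading form is not annihilated. It follows that $\deg(w)$ equals the product of the degrees of the triangular factors occurring in $w$, hence $\deg(w)\geq 2>1=\deg(\id)$, so $w\neq\id$. Equivalently, one can phrase this through the action of $\Aut(\A^2_{\kk})$ on the associated Bass--Serre tree together with a ping-pong argument. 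Either way the homomorphism from the amalgam is injective, and combined with the generation statement it is an isomorphism.

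The main obstacle is the lemma on leading forms. Deducing from the vanishing of the top Jacobian that one of $p,q$ divides the other, and that the higher-degree leading form is a scalar multiple of a power of the other, is the analytic heart of the argument, and the very same no-cancellation phenomenon underlies the degree estimate needed for injectivity. Over a field that is not algebraically closed or of positive characteristic the common linear form need not be defined over $\kk$, so I would keep the lemma in the power-proportionality formulation $\bar P^{\,q/g}=\lambda\,\bar Q^{\,p/g}$, which holds over an arbitrary field and delivers the required divisibility of degrees directly.
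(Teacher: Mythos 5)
The paper does not prove this statement: it is quoted directly from Kambayashi's article (Theorem 2 there), so there is no internal proof to compare against. Your sketch follows the classical route of that reference and its predecessors --- Jung's induction on the degree for generation, and the van der Kulk--Nagata no-cancellation degree estimate (equivalently, ping-pong on the Bass--Serre tree) for the amalgamated structure --- and the overall architecture is the standard one.

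There is, however, one step that would fail as written. From the vanishing of the degree-$(p+q-2)$ part of the Jacobian you deduce that the leading forms $\bar P$ and $\bar Q$ are algebraically dependent. The implication ``vanishing Jacobian $\Rightarrow$ algebraic dependence'' is a characteristic-zero fact: over a field of characteristic $p>0$ it is false (already $\bar P=x^p$, $\bar Q=y$ have identically zero Jacobian but are algebraically independent), and the theorem is asserted for an arbitrary field $\kk$. Your closing caveat addresses only the rationality over $\kk$ of the common linear form, which is a different and much milder issue, and your parenthetical appeal to bijectivity is attached to the wrong step (the upgrade to divisibility of degrees, not the dependence of the leading forms). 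To make the key lemma characteristic-free one must exploit the surjectivity of $f^*$, i.e.\ $\kk[P,Q]=\kk[x,y]$, in an essential way --- via degree or valuation arguments at infinity, or the elementary method of Makar-Limanov and Dicks --- rather than the Jacobian criterion. Since this lemma is, as you say yourself, the analytic heart of both halves of the argument, the gap is genuine, though it is precisely the point that the cited literature settles.
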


\begin{notation} 
We denote by  
\[\Jac(f)=\begin{vmatrix} \frac{\partial f_1}{x_1}& \cdots & \frac{\partial f_1}{x_n}\\
\vdots & \ddots& \vdots \\
\frac{\partial f_n}{x_1}& \cdots & \frac{\partial f_n}{x_n}\end{vmatrix}\in R[x_1,\ldots,x_n]\] 
the determinant of the Jacobian matrix of any $f=(f_1,\ldots,f_n)\in\End(\A^n_{R})$. We recall that $\Jac(f)\in R^{\times}$ if $f\in\Aut(\A^n_R)$.
\end{notation}

\begin{definition}
A polynomial $P\in R[x_1,\ldots,x_n]$ is called a \textit{variable} if there exists an automorphism $f$ in $\Aut(\A^n_R)$ such that $f^*(x_1)=P$.
\end{definition}

\subsection{Group cohomology, real structures and real forms}

\begin{definition}\label{def:cocycles}
For each group $(G,\circ)$ on which $\mathrm{Gal}(\C/\R)$ acts, we denote by $\alpha\mapsto \overline{\alpha}$ the action of the nontrivial element of $\mathrm{Gal}(\C/\R)$ and by $Z^1(G)\coloneqq Z^1(\mathrm{Gal}(\C/\R),G)=\{\nu\in G\mid \nu\circ \overline{\nu}=1\}$ the set of \textit{$1$-cocycles}. We say that two $1$-cocycles $\nu,\tau$ are \textit{equivalent} if there exists $\alpha\in G$ such that $\tau=\alpha^{-1}\circ\nu\circ \overline{\alpha}$. The cohomology set $H^1(G)\coloneqq H^1(\mathrm{Gal}(\C/\R),G)$ is the set of equivalence classes of $1$-cocycles. It is a pointed set, with a distinguished trivial element, denoted by $1$, which is the class of the identity.
\end{definition}

Since we will need them later in the text, we collect here the cohomology sets of some classical groups.

\begin{lemma}\label{Lem:H1CCstar} Let $n\geq1$ be an integer. We consider the standard action of $\mathrm{Gal}(\C/\R)$ on $\C^n$, on polynomials and on matrices via the complex conjugation of their coefficients.
 \begin{enumerate}[leftmargin=*]
\item\label{H1CCstar} The cohomology pointed sets 
\[H^1(\C^n), H^1(\C^*), H^1(\C[x_1,\ldots,x_n])\]
of the groups $(\C^n,+),(\C^*,\cdot), (\C[x_1,\ldots,x_n],+)$ are trivial.
\item\label{Mun} Let $\mu_n=\{c\in\C\mid c^n=1\}$ be the group of $n$-th roots of unity. The cohomology set $H^1(\mu_n)$ is trivial if $n$ is odd and contains two elements if $n$ is even. These two elements are the class of squares, that is, the class of $1$, and the class of non-square elements, namely, the class of any generator of $\mu_n$.
\item\label{PGL2C}
The cohomology set $H^1(\mathrm{PGL}_2(\C))$ contains exactly two elements. The first one is the set of classes of all matrices $A\in \mathrm{SL}_2(\C)$ with $A\cdot \overline{A}=\left(\begin{smallmatrix} 1 & 0 \\ 0 & 1 \end{smallmatrix}\right)$. The second one is the set of classes of all $A\in \mathrm{SL}_2(\C)$ with $A\cdot \overline{A}=\left(\begin{smallmatrix} -1 & \phantom{-}0 \\ \phantom{-}0 & -1 \end{smallmatrix}\right)$.
\end{enumerate}
\end{lemma}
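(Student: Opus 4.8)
My plan is to treat each of the three items as a direct computation with $1$-cocycles, using the explicit descriptions of $Z^1$ and of equivalence from Definition~\ref{def:cocycles}. For item~\eqref{H1CCstar} I would show that every cocycle is a coboundary. In the additive cases $(\C^n,+)$ and $(\C[x_1,\ldots,x_n],+)$, the cocycle condition $\nu+\overline\nu=0$ means $\overline\nu=-\nu$, and then $\alpha=\tfrac12\nu$ satisfies $\alpha-\overline\alpha=\nu$, so $\nu$ is trivial. For $(\C^*,\cdot)$ this is Hilbert~90: the condition $\nu\overline\nu=1$ forces $|\nu|=1$, and writing $\nu=\e^{\I\theta}$ with $\alpha=\e^{\I\theta/2}$ gives $\alpha\,\overline\alpha^{-1}=\nu$ (alternatively $\alpha=1+\nu$ works whenever $\nu\neq-1$, and $\alpha=\I$ handles $\nu=-1$). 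For item~\eqref{Mun}, the key observation is that complex conjugation acts on $\mu_n$ by inversion, since $|\zeta|=1$ for $\zeta\in\mu_n$; hence the cocycle condition $\nu\overline\nu=\nu\nu^{-1}=1$ holds automatically, so $Z^1(\mu_n)=\mu_n$, and as $\mu_n$ is abelian, $\nu$ and $\tau$ are equivalent iff $\tau\nu^{-1}\in(\mu_n)^2$. Thus $H^1(\mu_n)\cong\mu_n/(\mu_n)^2$, which is trivial when $n$ is odd (squaring is bijective) and has order~$2$ when $n$ is even, the nontrivial class being that of any generator.

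Item~\eqref{PGL2C} is the main work. A cocycle is a class $[A]\in\PGL_2(\C)$ with $[A][\overline A]=1$, that is, $A\overline A=\lambda I$ for some $\lambda\in\C^*$. The plan is to first normalize: rescaling $A$ by a scalar does not change $[A]$, so I may take $A\in\mathrm{SL}_2(\C)$; then taking determinants in $A\overline A=\mu I$ forces $\mu^2=1$, hence $A\overline A=\pm I$, placing every cocycle in one of the two stated sets. Next I would check that the sign $\varepsilon\in\{\pm1\}$ with $A\overline A=\varepsilon I$ is an invariant of the class. Indeed, if $A'=t\,P^{-1}A\overline P$ represents an equivalent cocycle, a direct computation gives $A'\overline{A'}=|t|^2\,P^{-1}(A\overline A)P=|t|^2\varepsilon I$; choosing $t$ so that $A'\in\mathrm{SL}_2(\C)$ and taking determinants forces $|t|^2=1$, so $A'\overline{A'}=\varepsilon I$ as well. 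This shows the two sets are disjoint, so $H^1(\PGL_2(\C))$ has at least two elements.

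It then remains to show each sign gives a single class, and I would interpret $A$ as the antilinear map $\sigma_A\colon v\mapsto A\overline v$ on $\C^2$, so that $A\overline A=\varepsilon I$ reads $\sigma_A^2=\varepsilon\,\id$ and equivalence of $[A]$ with $[B]$ amounts to conjugating $\sigma_A$ into $\sigma_B$ by a $\C$-linear isomorphism. For $\varepsilon=+1$, $\sigma_A$ is an antilinear involution, whose fixed locus is a real form of $\C^2$; taking an $\R$-basis of it as $\C$-basis yields $P$ with $A=P\overline P^{-1}$, so $[A]$ is the trivial class. For $\varepsilon=-1$, I would pick any $v_0\neq0$ and observe that $v_0$ and $A\overline{v_0}$ are linearly independent (an equality $A\overline{v_0}=\lambda v_0$ would give $|\lambda|^2=-1$); in the basis $(v_0,A\overline{v_0})$ the map $\sigma_A$ sends $e_1\mapsto e_2\mapsto -e_1$, which is exactly $\sigma_J$ for $J=\left(\begin{smallmatrix}0 & -1\\ 1 & \phantom{-}0\end{smallmatrix}\right)$, whence $[A]=[J]$. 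This gives exactly two classes. The main obstacle is organizing item~\eqref{PGL2C} cleanly---in particular verifying that the sign is a genuine invariant and exhibiting the explicit change of basis in each case---whereas the computations in \eqref{H1CCstar} and \eqref{Mun} are routine.
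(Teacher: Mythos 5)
Your proposal is correct and follows essentially the same route as the paper: identical coboundary computations for \eqref{H1CCstar} and \eqref{Mun}, and for \eqref{PGL2C} the same normalization to $A\in\mathrm{SL}_2(\C)$ with $A\cdot\overline{A}=\pm I$ followed by the same determinant argument showing the sign is a class invariant. The only divergence is in showing the $\varepsilon=+1$ cocycles form a single class, where you invoke Galois descent (the fixed locus of the antilinear involution $v\mapsto A\overline{v}$ being a real form of $\C^2$ --- a standard fact, but one you should justify, e.g.\ by noting that $w+A\overline{w}$ and $\I(w-A\overline{w})$ span), whereas the paper treats both signs uniformly by exhibiting a vector $v$ with $v$ and $A\overline{v}$ linearly independent and conjugating every cocycle to $\left(\begin{smallmatrix}0&\varepsilon\\1&0\end{smallmatrix}\right)$.
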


\begin{proof}
\ref{H1CCstar} An element of $Z^1((\C^n,+))$ is of the form $\nu\in \C^n$, with $\nu+\overline{\nu}=0$. Choosing $\alpha=\frac{\nu}{2}$, we obtain $\overline{\alpha}=-\alpha$. Whence, $-\alpha+\nu +\overline{\alpha}=0$. This shows $\nu\sim 0$. The same argument applies to $(\C[x_1,\ldots,x_n],+)$.

An element of $Z^1((\C^*,\cdot))$ is of the form $\nu\in \C^{*}$ with $\nu\cdot \overline{\nu}=1$. Hence, $\lvert \nu\rvert=1$. Choosing $\alpha$ with $\alpha^2=\nu$, we obtain $\lvert \alpha\rvert=1$. This implies $\alpha^{-1}\cdot \nu \cdot \overline{\alpha}=1$ and shows that $\nu\sim 1$.

\ref{Mun} As every element $\nu\in\mu_n$ satisfies $\lvert \mu_n\rvert=1$, we have $Z^1(\mu_n)=\mu_n$. Moreover, two elements $\nu,\tau\in \mu_n$ are equivalent if and only if there exists $\alpha\in \mu_n$ such that $\tau=\alpha^{-1}\nu \overline{\alpha}=\nu\alpha^{-2}$, i.e., if and only if $\tau\nu^{-1}$ is a square in $\mu_n$. This implies that $H^1(\mu_n)$ is trivial if $n$ is odd and contains exactly two classes if $n$ is even: the class containing the squares and the one consisting of non-square elements.

\ref{PGL2C} Every element $\tau\in Z^1(\mathrm{PGL}_2(\C))$ is the class of a matrix $A\in \mathrm{SL}_2(\C)$ with 
$A\cdot \overline{A}=\left(\begin{smallmatrix} \epsilon & 0 \\ 0 & \epsilon \end{smallmatrix}\right)$ for some $\epsilon\in \C$. Moreover, $\epsilon=\pm 1$, as $\epsilon^2=\det(A\cdot \overline{A})=1$. We first prove that $\tau$ is equivalent to the class of $\left(\begin{smallmatrix} 0 & \epsilon \\ 1 & 0 \end{smallmatrix}\right)$. For this, choose a $2\times 1$ vector $v$ such that $A\overline{v},v$ are linearly independent. To see that such a vector exists, observe that if $A$ is not diagonal, then we can choose $v=\left(\begin{smallmatrix} 1\\ 0 \end{smallmatrix}\right)$ or $v=\left(\begin{smallmatrix} 0\\ 1 \end{smallmatrix}\right)$. If $A$ is diagonal, then we can choose $v=\left(\begin{smallmatrix} 1\\ \mathbf{i} \end{smallmatrix}\right)$ if $\tau\in\mathrm{PGL}_2(\C)$ is the identity and $v=\left(\begin{smallmatrix} 1\\ 1\end{smallmatrix}\right)$ otherwise. Then, taking the matrix $R=\left(\begin{smallmatrix} v & A\overline{v} \end{smallmatrix}\right)\in \mathrm{GL}_2(\C)$ whose columns are $v$ and $A\overline{v}$ respectively, one checks that $\tau$ is equivalent to the class of
$R^{-1}\cdot A\cdot  \overline{R}=R^{-1}\cdot \left(\begin{smallmatrix} A\overline{v} & A\cdot \overline{A}v \end{smallmatrix}\right)=\left(\begin{smallmatrix} 0 & \epsilon \\ 1 & 0 \end{smallmatrix}\right)\in \mathrm{GL}_2(\C)$.

Now, consider two matrices $A_1,A_2\in \mathrm{SL}_2(\C)$ with $A_i\cdot \overline{A_i}=\left(\begin{smallmatrix} \epsilon_i & 0 \\ 0 & \epsilon_i \end{smallmatrix}\right)$, $\epsilon_i\in \{\pm 1\}$, and suppose that their classes are equivalent $1$-cocycles $\tau_1,\tau_2\in Z^1(\mathrm{PGL}_2(\C))$. To conclude the proof, it remains to show that $\epsilon_1=\epsilon_2$. Since $\tau_1,\tau_2$ are equivalent, there exist $B\in \mathrm{GL}_2(\C)$ and $\mu\in \C^*$ such that $A_2=\mu B^{-1}\cdot  A_1\cdot \overline{B}$. This gives $\left(\begin{smallmatrix} \epsilon_2 & 0 \\ 0 & \epsilon_2 \end{smallmatrix}\right)=A_2\cdot \overline{A_2}=\lvert \mu\rvert^2 B^{-1}\cdot A_1\cdot \overline{A_1}\cdot B=\lvert \mu\rvert^2 \left(\begin{smallmatrix} \epsilon_1 & 0 \\ 0 & \epsilon_1 \end{smallmatrix}\right)$, which implies $\epsilon_1=\epsilon_2$.
\end{proof}

\begin{definition}
If $R$ is a $\C$-algebra, a \textit{real structure} on $R$ is an action of $\mathrm{Gal}(\C/\R)$ on $R$ such that the nontrivial element acts by $\rho\colon \C\mapsto \C, \alpha\mapsto \overline{\alpha}$ on $\C$. This corresponds to giving a ring homomorphism $\rho\colon R\to R$   such that $\rho\circ \rho=\mathrm{id}_R$ and $\rho(\alpha \cdot f)=\overline{\alpha}\cdot \rho(f)$ for each $\alpha\in \C$ and each $f\in R$.   For each such structure, we obtain an action of $\mathrm{Gal}(\C/\R)$ on the group $\Aut_{\C}(R)$ of $\C$-automorphisms  by defining $\overline{f}= \rho \circ f \circ \rho$, for each $f\in \Aut_{\C}(R)$.
\end{definition}

\begin{definition}
If $X$ is a complex algebraic variety, a \textit{real structure} is an action of $\mathrm{Gal}(\C/\R)$ on $X$ such that the action of the nontrivial element is an anti-regular morphism $\rho\colon X\to X$, that is, a morphism of schemes   such that the following diagram commutes:
\[
			\xymatrix@R=0.5cm@C=1.5cm{
			X\ar@{->}[r]^{\rho}\ar@{->}[d]&X\ar@{->}[d]\\
			\mathrm{Spec}(\C) \ar@{->}[r]^{z\mapsto \overline{z}}& \mathrm{Spec}(\C)
			}
			\]	
For each such a real structure, the group $\langle \rho\rangle\simeq\mathrm{Gal}(\C/\R)$ acts on $\Aut_{\C}(X)$ by defining $\overline{f}= \rho \circ f \circ \rho$, for each $f\in \Aut_{\C}(X)$.
\end{definition}
Fixing a real structure $\rho$, we have a bijection between the set of equivalence classes of real structures on $X$ and $H^1(\Aut_{\C}(X))$: Each real structure is of the form $\nu{\circ}\rho$ with $\nu\in Z^1(\Aut_{\C}(X))$ and two real structures $\nu{\circ}\rho$, $\tau{\circ}\rho$ are equivalent if and only if the classes of $\nu$ and $\tau$ in $H^1(\Aut_{\C}(X))$ are equal, which means that $\nu{\circ}\rho$, $\tau{\circ}\rho$ are conjugate with respect to some automorphism $\alpha \in \Aut_{\C}(X)$, i.e., $\tau\circ \rho= \alpha^{-1} \circ (\nu \circ \rho) \circ \alpha$.

\begin{remark}
Giving a real structure on an affine complex variety $X$ is the same as giving a real structure on the $\C$-algebra $\C[X]$ of regular functions. Fixing such a real structure, the group $\mathrm{Gal}(\C/\R)$ acts on $\C[X]$ via ring-automorphisms, and the natural $\C$-anti-isomorphism between $\Aut_{\C}(X)$ and $\Aut_{\C}(\C[X])$ induces an isomorphism of pointed sets \[H^1(\Aut_{\C}(X))\iso H^1(\Aut_{\C}(\C[X])),\] i.e., a bijection sending the identity to the identity.
\end{remark}

\begin{definition}
	A \textit{real form} of a complex algebraic variety $X$ is a real algebraic variety $X_0$ together with a $\C$-isomorphism \[\varphi\colon X_0 \times_{\Spec(\R)} \Spec(\C) \overset{\sim}{\rightarrow} X.\] 
\end{definition}

Real forms and real structures of a complex algebraic variety $X$ correspond to one another: For any real structure $\rho$ on $X$, the variety $X/\langle \rho \rangle$ is a real form of $X$, and, given a real form $(X_0, \varphi)$ of $X$, the map $\varphi\circ(\id \times \Spec(z\mapsto \overline{z}))\circ \varphi^{-1}$ defines a real structure on $X$. We refer to \cite{benzerga2016structures} for a description of the equivalence of categories between quasiprojective complex varieties with a real structure and quasiprojective real varieties.

\begin{example} 
It is an easy exercise to check that $H^1(\Aut(\A^1_\C))$ is trivial, hence that $\A^1_\R$ is the only real form of $\A^1_\C$ up to isomorphism. However, the affine curve $\A^1_\C\setminus \{0\}$ has three different isomorphism classes of real forms, see Proposition~\ref{Prop:Cstar2realforms}. 
\end{example}

\begin{notation}[Usual complex conjugation]\label{notcomplexconj}
For the rest of the text, we shall always denote the standard action of $\mathrm{Gal}(\C/\R)$ on the affine space $\A^n_\C\simeq\C^n$ by $\rho\colon z=(z_1,\ldots,z_n)\mapsto \overline{z}=(\overline{z_1},\ldots,\overline{z_n})$. This provides the standard real structures on $\A^n_\C$ and $\C[\A^n_\C]=\C[x_1,\ldots,x_n]$.

Accordingly, we denote by $\overline{p}=\rho\circ p\circ \rho$ and $\overline{f}=\rho\circ f\circ\rho=(\overline{f_1},\ldots,\overline{f_n})$ the conjugate of a polynomial $p\in\C[\A^n_{\C}]$ and of an endomorphism $f=(f_1,\ldots,f_n)\in\End(\A^n_{\C})$. If $p=\sum_{i_1,\ldots,i_n\ge 0} a_{i_1,\ldots,i_n} x_1^{i_1}\cdots x_n^{i_n}$, then we simply have $\overline{p}=\sum_{i_1,\ldots,i_n\ge 0} \overline{a_{i_1,\ldots,i_n}} x_1^{i_1}\cdots x_n^{i_n}$.
\end{notation}

\section{The surfaces $D_p$}
\label{Sec:DanSurf}

\subsection{Reduced form}

\begin{notation}Given a nonconstant polynomial $p\in\kk[z]$, we denote by $D_p$ the hypersurface in $\A^3_{\kk}=\Spec(\kk[x,y,z])$ defined by the equation $xy=p(z)$.
\end{notation}

\begin{theorem}[{\cite[Lemma 2.10]{Daigle}}]\label{theo:DanielewskiIsoClasses}
Let $\kk$ be a field and let $p,q\in\kk[z]$. 
The surfaces $D_p$ and $D_q$ are isomorphic over $\kk$ if and only if there exist $a,\lambda\in\kk^*$ and $b\in\kk$ such that $p(az+b)=\lambda q(z)$. 
\end{theorem}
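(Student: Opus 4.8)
The plan is to prove both implications: the ``if'' direction by an explicit change of coordinates, and the converse by recovering $p$ up to affine transformation from the geometry of the two $(\kk,+)$-actions on $D_p$. For the ``if'' direction, suppose $p(az+b)=\lambda q(z)$ with $a,\lambda\in\kk^*$ and $b\in\kk$; substituting $z\mapsto\frac{z-b}{a}$ gives $q\bigl(\tfrac{z-b}{a}\bigr)=\lambda^{-1}p(z)$. I would then simply write down the $\kk$-algebra homomorphism
\[\theta\colon \kk[x',y',z']/(x'y'-q(z'))\to \kk[x,y,z]/(xy-p(z)),\quad x'\mapsto \lambda^{-1}x,\ y'\mapsto y,\ z'\mapsto \tfrac{z-b}{a}.\]
It is well defined since $\theta(x'y'-q(z'))=\lambda^{-1}xy-q\bigl(\tfrac{z-b}{a}\bigr)=\lambda^{-1}xy-\lambda^{-1}p(z)=0$, and its inverse $x\mapsto\lambda x'$, $y\mapsto y'$, $z\mapsto az'+b$ is visibly a morphism as well, so $D_p\cong D_q$.

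For the converse I would assume $\deg p,\deg q\ge 2$ (the remaining low-degree cases give $\A^2_\kk$ or $\A^1_\kk\times(\A^1_\kk\setminus\{0\})$ and are elementary) and let $\Psi^*\colon \kk[D_p]\to\kk[D_q]$ be a $\kk$-algebra isomorphism. The surface $D_p$ carries two standard locally nilpotent derivations, $\partial_1$ determined by $\partial_1(x)=0,\ \partial_1(z)=x,\ \partial_1(y)=p'(z)$, and the symmetric $\partial_2$ determined by $\partial_2(y)=0,\ \partial_2(z)=y,\ \partial_2(x)=p'(z)$, with kernels $\kk[x]$ and $\kk[y]$; the associated $\A^1$-fibrations $x,y\colon D_p\to\A^1_\kk$ each have a single degenerate fibre, lying over $0$. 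The heart of the argument, and the step I expect to be the main obstacle, is to standardise $\Psi^*$ using the classification of locally nilpotent derivations on Danielewski surfaces (Makar--Limanov; see also \cite{BD}): every nonzero such derivation is, up to an automorphism, a $\kk[x]$-multiple of $\partial_1$, and any two $\A^1$-fibrations with distinct general fibres can be simultaneously moved onto $x$ and $y$. Transporting $\partial_1,\partial_2$ through $\Psi^*$ and composing with a suitable automorphism of $\kk[D_q]$, I may therefore assume $\Psi^*(\kk[x])=\kk[x']$ and $\Psi^*(\kk[y])=\kk[y']$. Then $\Psi^*(x),\Psi^*(y)$ are $\kk$-algebra generators of $\kk[x'],\kk[y']$, hence affine-linear; and since the zero loci $V(x),V(y)$ are degenerate fibres while the only degenerate fibre of $x'$ (resp.\ $y'$) lies over $0$, the constant terms must vanish, giving $\Psi^*(x)=\alpha_1 x'$ and $\Psi^*(y)=\alpha_2 y'$ with $\alpha_1,\alpha_2\in\kk^*$.

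It then remains to control $Z\coloneqq\Psi^*(z)$. From $\Psi^*(xy)=\Psi^*(p(z))$ I obtain $p(Z)=\alpha_1\alpha_2\,x'y'=\lambda\,q(z')$ with $\lambda=\alpha_1\alpha_2\in\kk^*$. Localising at $x'$ identifies $\kk[D_q][1/x']$ with $\kk[x',x'^{-1},z']$, so $Z$ is a polynomial in $z'$ over $\kk[x',x'^{-1}]$; comparing $z'$-degrees in $p(Z)=\lambda q(z')$ forces $\deg p\cdot\deg_{z'}Z=\deg q$, and applying the same reasoning to $\Psi^{-1}$ yields $\deg p=\deg q$ and hence $\deg_{z'}Z=1$, say $Z=u(x')z'+v(x')$. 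Matching leading $z'$-coefficients gives $u^{\deg p}\in\kk^*$, whence $u=a\in\kk^*$ (the units of $\kk[x',x'^{-1}]$ being the $cx'^n$); requiring $Z\in\kk[D_q]$ then removes the negative powers of $x'$ from $v$, since a nonzero coefficient of $x'^{-j}$ in an element of $\kk[D_q]$ must be divisible by $q(z')^j$ while $v$ is constant in $z'$; and finally the $x'$-degree of $p(az'+v(x'))$ equals $\deg p\cdot\deg v$, which must vanish because $\lambda q(z')$ is independent of $x'$. Thus $v=b\in\kk$ and $Z=az'+b$, so $p(az'+b)=\lambda q(z')$, as required. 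Note that everything after the standardisation is bookkeeping with degrees that is valid in arbitrary characteristic, so the only genuinely hard input is the classification of the $\A^1$-fibrations of a Danielewski surface up to automorphism.
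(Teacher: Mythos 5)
The paper does not prove this statement at all --- it is imported verbatim from \cite[Lemma 2.10]{Daigle} --- so there is no internal proof to compare against; I am judging your argument on its own. Your ``if'' direction is correct, and the final degree bookkeeping (identifying $\kk[D_q][1/x']$ with $\kk[x',x'^{-1},z']$, forcing $\deg p=\deg q$, $\deg_{z'}Z=1$, $u\in\kk^*$, and $v\in\kk$) is sound \emph{granted} the normalisation $\Psi^*(x)=\alpha_1x'$, $\Psi^*(y)=\alpha_2y'$. The genuine gap is exactly at the step you flag as the main obstacle: the lemma you invoke --- that any two $\A^1$-fibrations with distinct general fibres can be simultaneously moved onto $x'$ and $y'$ --- is \emph{false}. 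By Theorem~\ref{thm: BD11, autos of A1-fibred affine surfaces} and Remark~\ref{Rema:Fibration}, the stabiliser of the fibration class $\kk[x']$ is exactly $B_\kk(q)$, and the set of $\A^1$-fibration kernels is the coset space $\Aut_\kk(D_q)/B_\kk(q)$; the normal form theorem for the amalgam $A_\kk(q)\Asterisk_{\cap}B_\kk(q)$ shows that the double coset space $B_\kk(q)\backslash\Aut_\kk(D_q)/B_\kk(q)$ is infinite, so $B_\kk(q)$ has infinitely many orbits on the fibrations other than $\kk[x']$. Concretely, for $\sigma=(y',x',z')$ and $\psi=\psi_{1,1,1,0,x}\in B_\kk(q)\setminus A_\kk(q)$, the fibration determined by $w=x'+\bigl(q(z'+y'^2)-q(z')\bigr)/y'$ corresponds to the double coset $B\sigma\psi\sigma B\neq B\sigma B$, so the pair $(\kk[x'],\kk[w])$ consists of two distinct $\A^1$-fibrations that no automorphism carries to $(\kk[x'],\kk[y'])$.

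What is true (Makar--Limanov, Daigle) is only that every single $\A^1$-fibration is in the $\Aut_\kk(D_q)$-orbit of $x'$, which lets you arrange $\Psi^*(x)=\alpha_1x'$; after that you have lost the freedom to also normalise $\Psi^*(y)$, and your identity $p(Z)=\lambda q(z')$ --- which drives all the subsequent bookkeeping --- is no longer available (you only get $p(Z)=\alpha_1x'\,\Psi^*(y)$). To repair the argument you must use that $(\kk[x],\kk[y])$ is not an arbitrary pair of distinct fibrations but an \emph{adjacent} pair (their degenerate fibres meet; equivalently the corresponding vertices are at distance two in the Bass--Serre tree of the amalgam), show that adjacency is intrinsic and hence preserved by $\Psi$, and prove transitivity of $\Aut_\kk(D_q)$ on ordered adjacent pairs --- which does hold, since the vertex stabilised by $A_\kk(q)$ has exactly the two neighbours $\kk[x']$, $\kk[y']$, interchanged by $(y',x',z')$. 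None of this is in your proposal, and it is not contained in the sources you cite in the form you state it; as written the reduction to $\Psi^*(y)=\alpha_2y'$ does not go through. (A secondary caveat: the locally nilpotent derivation machinery you lean on is a characteristic-zero tool, whereas the statement is over an arbitrary field.)
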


\begin{definition}\label{Defi:ReducedForm}
A nonconstant polynomial $p\in\kk[z]$ is called \emph{in reduced form} if $p(z)=z^d+s(z)$ for some integer $d\geq1$ and some polynomial $s\in\kk[z]$ with $\deg(s)\leq d-2$.
\end{definition}

\begin{lemma}\label{Lemma:GoToReduced}If $\kk$ is a field of characteristic zero, then every surface $D_p$ defined over $\kk$ is isomorphic to a surface $D_q$ with $q$ in reduced form.
\end{lemma}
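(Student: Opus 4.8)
The plan is to reduce the whole statement to the isomorphism criterion of Theorem~\ref{theo:DanielewskiIsoClasses}. By that criterion, producing a polynomial $q$ in reduced form with $D_p\cong D_q$ amounts to finding $a,\lambda\in\kk^*$ and $b\in\kk$ such that $q(z)\coloneqq\lambda^{-1}p(az+b)$ is monic of degree $d=\deg(p)$ with vanishing coefficient in degree $d-1$. So I would forget about the surfaces altogether and simply normalise the one-variable polynomial $p$ by an affine substitution in $z$ together with a rescaling; in fact the scaling of $z$ is unnecessary, so I may take $a=1$ throughout.

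First I would make $q$ monic. Writing $p(z)=c_dz^d+c_{d-1}z^{d-1}+\dots+c_0$ with $c_d\neq0$, the leading coefficient is unchanged by translating $z$, so it suffices to set $\lambda=c_d$: then $c_d^{-1}p(z+b)$ is monic of degree $d$ for every choice of $b$. Next I would eliminate the subleading term by a Tschirnhaus-type translation. Expanding $p(z+b)=\sum_i c_i(z+b)^i$, the coefficient of $z^{d-1}$ equals $d\,c_d\,b+c_{d-1}$, with contributions only from the terms $i=d$ and $i=d-1$. Since $\kk$ has characteristic zero we have $d\,c_d\neq0$, so I can solve $b=-c_{d-1}/(d\,c_d)$ and make this coefficient vanish.

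Setting $q(z)=c_d^{-1}p(z+b)$ for this value of $b$, the polynomial $q$ is monic of degree $d$ with no term of degree $d-1$, that is, $q$ is in reduced form in the sense of Definition~\ref{Defi:ReducedForm}. By construction $p(z+b)=c_d\,q(z)$, so Theorem~\ref{theo:DanielewskiIsoClasses} (with $a=1$, $\lambda=c_d$ and this $b$) yields $D_p\cong D_q$, which is exactly what is claimed.

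The only point where the hypothesis on $\kk$ is used — and the only thing that could go wrong — is the division by $d$ in the translation step: in positive characteristic dividing $d$ one could not clear the $z^{d-1}$ coefficient in this way. Beyond that there is no real obstacle. The degree-one case $d=1$ is consistent as well, since reduced form then forces $s=0$ and hence $q(z)=z$, which the same translation (now clearing the constant term) together with the rescaling achieves.
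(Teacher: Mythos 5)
Your proof is correct and follows essentially the same route as the paper: both normalise $p$ by a translation $z\mapsto z+b$ with $b=-c_{d-1}/(d\,c_d)$ (where characteristic zero is used) together with a rescaling, you merely spell out the Tschirnhaus computation that the paper leaves implicit. The only cosmetic difference is that the paper exhibits the explicit affine automorphism $(x,\tfrac{1}{\lambda}y,z+\mu)\in\Aff_3(\kk)$ inducing $D_q\simeq D_p$, whereas you invoke the easy direction of Theorem~\ref{theo:DanielewskiIsoClasses}, whose proof amounts to the same map.
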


\begin{proof}
For every $p\in\kk[z]$, there exist $\lambda\in \kk^*$ and $\mu\in \kk$ such that the polynomial $q(z)=\lambda p(z+\mu)$ is in reduced form. Then, the affine map $\varphi=(x,\frac{1}{\lambda} y,z+\mu)\in\Aff_3(\kk)$ induces an isomorphism between the hypersurfaces $D_q$ and $D_p$, as $\varphi^*(xy-p(z))=\frac{1}{\lambda}(xy-q(z))$.
\end{proof}

\subsection{Automorphisms}

A list of generators for the automorphism groups of the surfaces $D_p$ has been first given in \cite{ML1}.

\begin{theorem}[{\cite{ML1}}] \label{MLgen}
Let $\kk$ be a field and let $p\in\kk[z]$ be a polynomial of degree at least $2$. Then, every automorphism of the surface $D_p\subset\A^3_{k}$ extends to an automorphism of $\A^3_{k}$. Moreover, the group $\Aut_{\kk}(D_p)$ is generated by the following subgroups:
\begin{itemize}
\item $\left\{(x,y+\frac{p(z+xr(x))-p(z)}{x},z+xr(x))\mid r(x)\in\kk[x]\right\}\simeq(\kk[x],+)$;
\item $\left\{(x,y,z),(y,x,z)\right\}\simeq\Z/2\Z$;
\item $\left\{(ax,b y,cz+d)\mid a,b,c\in \kk^*,d\in\kk, p(cz+d)=ab p(z)\right\}$
\end{itemize}
\end{theorem}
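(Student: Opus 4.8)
The plan is to analyse $\Aut_\kk(D_p)$ through its action on the locally nilpotent derivations (LNDs), equivalently the $\A^1$-fibrations, of the coordinate ring $A=\kk[x,y,z]/(xy-p(z))$, following Makar-Limanov. Let $H_1,H_2,H_3$ denote the three subgroups listed in the statement, in order, and put $G\coloneqq\langle H_1,H_2,H_3\rangle$. The surface carries two basic LNDs, namely $\partial_1=x\,\partial_z+p'(z)\,\partial_y$ with $\ker\partial_1=\kk[x]$ and its counterpart $\partial_2=y\,\partial_z+p'(z)\,\partial_x$ with $\ker\partial_2=\kk[y]$; one checks that $H_1=\{\exp(r(x)\partial_1)\mid r\in\kk[x]\}\cong(\kk[x],+)$, that the involution in $H_2$ conjugates $\partial_1$ to $\partial_2$, and that $H_3$ preserves each of $\kk[x]$ and $\kk[y]$. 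I would split the proof into (i) computing the stabilizer of $\kk[x]$ in $\Aut_\kk(D_p)$, and (ii) showing that $G$ moves the kernel $\phi^*(\kk[x])$ of an arbitrary automorphism $\phi$ onto $\kk[x]$.

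For (i), assume $\phi^*(\kk[x])=\kk[x]$. Then $\phi^*$ restricts to an automorphism of the polynomial ring $\kk[x]$, so $\phi^*(x)=ax+b$; since $\phi$ must fix the unique degenerate fibre of the fibration attached to $\kk[x]$ (over $x=0$), we get $b=0$. Localizing gives $A[x^{-1}]=\kk[x^{\pm1},z]$ (using $y=p(z)/x$), on which $\phi^*$ is an automorphism fixing $\kk[x^{\pm1}]$, whence $\phi^*(z)=\lambda x^m z+v(x)$ with $\lambda\in\kk^*$, $m\in\Z$ and $v\in\kk[x^{\pm1}]$. As elements of $A$ have their negative $x$-powers controlled by divisibility by powers of $p(z)$, requiring both $\phi^*(z)$ and $(\phi^*)^{-1}(z)$ to lie in $A$ forces $m=0$ and $v\in\kk[x]$. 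A shear from $H_1$ then kills the nonconstant part of $v$, leaving $\phi^*(z)=\lambda z+d_0$; from $p(z)\mid p(\lambda z+d_0)$ one gets $p(\lambda z+d_0)=\lambda^d p(z)$ and $\phi^*(y)=(\lambda^d/a)\,y$, so the reduced map lies in $H_3$. Hence the stabilizer of $\kk[x]$ equals $\langle H_1,H_3\rangle$.

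The main obstacle is (ii), since there are many more LNDs than $\partial_1,\partial_2$: conjugating $\partial_2$ by a shear from $H_1$ already produces kernels different from $\kk[x]$ and $\kk[y]$. To control them I would run Makar-Limanov's filtration argument. Assigning weights $w(x)=w(y)=d$ and $w(z)=2$ makes the associated graded ring the quasihomogeneous surface $\bar A=\kk[x,y,z]/(xy-z^d)$; the hypothesis $d\ge2$ makes it singular, and a direct computation on the cyclic cover $\A^2\to\A^2/\mu_d=\Spec\bar A$ shows that the kernel of every homogeneous LND of $\bar A$ is $\kk[\bar x]$ or $\kk[\bar y]$. For $\phi\in\Aut_\kk(D_p)$ the leading term of the conjugate $\phi\,\partial_1\,\phi^{-1}$ is a nonzero homogeneous LND of $\bar A$, so the leading form of a generator of $\phi^*(\kk[x])$ lies in $\kk[\bar x]$ or $\kk[\bar y]$. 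As in the Jung--van der Kulk reduction for $\A^2$, this alignment should let one strictly lower the weighted degree of a generator of $\phi^*(\kk[x])$ by composing with a shear from $H_1$ or from $H_1'=H_2H_1H_2$; iterating then produces $g\in G$ such that $g\phi$ preserves $\kk[x]$. By (i), $g\phi\in\langle H_1,H_3\rangle\subseteq G$, so $\phi\in G$, and $\Aut_\kk(D_p)=G$. Making the degree-reduction step precise on the singular graded surface $xy=z^d$ is the genuinely delicate part.

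The extension statement then comes for free: each generator in $H_1,H_2,H_3$ is given by explicit polynomials defining an automorphism of the ambient $\A^3_\kk$ (triangular for $H_1$, affine for $H_2$ and $H_3$), so every product of them, i.e. every element of $\Aut_\kk(D_p)=G$, is the restriction of an automorphism of $\A^3_\kk$. Thus the homogeneous-LND classification underlying (ii) is the only substantial input; everything else is bookkeeping with the three explicit families.
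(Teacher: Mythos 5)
The paper does not prove this statement at all: it is quoted verbatim from Makar-Limanov's work \cite{ML1} (with the arbitrary-field case resting on later treatments such as \cite{Daigle}), so there is no in-paper argument to compare against. Your proposal is essentially a reconstruction of Makar-Limanov's own proof, and its outline is the right one. Part (i) is in fact complete as you present it: the localization $A[x^{-1}]=\kk[x^{\pm1},z]$, the divisibility-by-$p(z)$ constraint forcing $m=0$ and $v\in\kk[x]$, and the reduction by a shear from $H_1$ correctly identify the stabilizer of $\kk[x]$ with $\langle H_1,H_3\rangle$ (this is the subgroup $B_{\kk}(p)$ of Theorem~\ref{thm: BD11, autos of A1-fibred affine surfaces}).

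The genuine gap is exactly where you flag it: step (ii) is not a proof but a statement of intent. The two claims carrying all the weight --- that every nonzero homogeneous locally nilpotent derivation of the associated graded ring $\kk[x,y,z]/(xy-p_dz^d)$ has kernel $\kk[\bar x]$ or $\kk[\bar y]$, and that alignment of the leading form of a generator of $\phi^*(\kk[x])$ with $\bar x$ or $\bar y$ permits a \emph{strict} drop of weighted degree after composing with a shear from $H_1$ or $H_2H_1H_2$ --- are asserted, not established. The second claim in particular is the delicate point: one must show that the top form of the conjugated derivation is again locally nilpotent on the graded ring, that the filtration is respected by $\Aut_{\kk}(D_p)$, and that the degree actually decreases rather than merely not increasing; this is the entire content of \cite{ML1}. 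A second, independent issue is that the statement is for an arbitrary field $\kk$, while your machinery (exponentials $\exp(r\partial_1)$, the derivation $p'(z)\partial_y$, kernels of LNDs being algebraically closed) is characteristic-zero technology; for $\mathrm{char}(\kk)=q>0$ and, say, $p(z)=z^q$, the derivation $\partial_1$ degenerates and the exponential series is unavailable, so the fibration-based or Crachiola--Makar-Limanov positive-characteristic variants are needed. As it stands the proposal is a correct and well-organized roadmap to the cited proof, but the central lemma that would make it a proof is missing.
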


Furthermore, it is a folklore result that $\Aut_{\kk}(D_p)$ is equal to the free product of two subgroups, amalgamated over their intersection, see for instance~\cite{FKZ}. As we did not find the precise statement we need in the literature, we reprove it here. This essentially follows from~\cite[Theorem~5.4.5]{BD} (see also \cite{KL16}, for a slightly weaker statement). 

\begin{theorem} \label{thm: BD11, autos of A1-fibred affine surfaces}
	Let $\kk$ be a field and $p\in\kk[z]$ be a polynomial of degree at least $2$. Let $D_p=\Spec(\kk[x,y,z]/(xy-p(z)))$ and define the subgroups 
\[A_{\kk}(p)=\left\{f\in\Aut_\kk(D_p)\mid \exists g\in \Aff_3(\kk) \colon f=g|_{D_p}\right\}\] 
and
\[B_{\kk}(p)=\left\{\psi_{a,b,c,d,r} \mid a,b,c\in \kk^*, d\in\kk, r\in\kk[x], abp(z)=p(cz+d)\right\},\]
where \[\psi_{a,b,c,d,r}=\left(ax, by+\frac{p(cz+d+xr(x)) -abp(z)}{ax}, cz+d+xr(x)\right).\]
 Then, the automorphism group of $D_p$ is the free product 
	\[\Aut_{\kk}(D_p)=A_{\kk}(p) \Asterisk_{\cap} B_{\kk}(p)\]
	of $A_{\kk}(p)$ and $B_{\kk}(p)$ amalgamated over their intersection $A_{\kk}(p)\cap B_{\kk}(p)$, which is the set of all elements $\psi_{a,b,c,d,r}$, with $a,b,c\in \kk^*, d\in\kk$ such that $abp(z)=p(cz+d)$, and $r\in\kk$ such that $r=0$ if $\deg(p)\ge 3$.
\end{theorem}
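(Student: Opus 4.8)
The plan is to deduce the amalgamated structure from the general description of the automorphism groups of $\A^1$-fibered affine surfaces in \cite[Theorem~5.4.5]{BD}, applied to $D_p$ equipped with the $\A^1$-fibration $\pi\colon(x,y,z)\mapsto x$, and then to pin down the two factors explicitly with the help of the generators of Theorem~\ref{MLgen}. The fibration $\pi$ has general fibre $\A^1_\kk$ (over $x=x_0\neq 0$ one has $y=p(z)/x_0$) and a single degenerate fibre over $x=0$, namely a union of lines indexed by the roots of $p$. Since all singular points of $D_p$ lie over $x=0$, the surface is normal and the hypotheses of \cite[Theorem~5.4.5]{BD} are satisfied.

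First I would verify that $A_\kk(p)$ and $B_\kk(p)$ are subgroups that generate $\Aut_\kk(D_p)$. Each family of generators in Theorem~\ref{MLgen} lies in $A_\kk(p)\cup B_\kk(p)$: the $(\kk[x],+)$-action consists of the maps $\psi_{1,1,1,0,r}\in B_\kk(p)$, the involution $(y,x,z)$ lies in $A_\kk(p)$, and the maps $(ax,by,cz+d)$ with $p(cz+d)=abp(z)$ are the $\psi_{a,b,c,d,0}\in A_\kk(p)\cap B_\kk(p)$, so that $\langle A_\kk(p),B_\kk(p)\rangle=\Aut_\kk(D_p)$. I would then identify the two factors of the amalgam furnished by \cite[Theorem~5.4.5]{BD}: its fibration-preserving factor contains $B_\kk(p)$, since every $\psi_{a,b,c,d,r}$ sends $x\mapsto ax$, but it does not contain the involution $(y,x,z)$, which exchanges $\pi$ with the second fibration $(x,y,z)\mapsto y$; using the generator list of Theorem~\ref{MLgen} one checks that these factors are exactly $B_\kk(p)$ and the affine subgroup $A_\kk(p)$. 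This is the precise analogue of the decomposition $\Aut(\A^2_\kk)=\Aff_2(\kk)\Asterisk_{\cap}\BA_2(\kk)$ of Theorem~\ref{JungvdK}, and yields $\Aut_\kk(D_p)=A_\kk(p)\Asterisk_{\cap}B_\kk(p)$.

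Next I would compute the intersection $A_\kk(p)\cap B_\kk(p)$ directly. An element $\psi_{a,b,c,d,r}$ is affine exactly when each of its three components has degree at most $1$. The third component $cz+d+xr(x)$ forces $r$ to be a constant; and, using $abp(z)=p(cz+d)$ and setting $w=cz+d$, the numerator of the $y$-component becomes $p(w+rx)-p(w)=rx\,p'(w)+\tfrac12 r^2x^2 p''(w)+\cdots$, so the $y$-component equals $by+a^{-1}\bigl(rp'(w)+\tfrac12 r^2 x\,p''(w)+\cdots\bigr)$. The term $a^{-1}rp'(cz+d)$ has degree $\deg(p)-1$ in $z$, whence affineness forces $r=0$ when $\deg(p)\ge 3$, while for $\deg(p)=2$ every constant $r$ is admissible. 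This produces the stated description of the amalgamating subgroup.

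The main obstacle I expect is the identification step rather than the computation: one must confirm that the two abstract factors produced by \cite[Theorem~5.4.5]{BD}, over the arbitrary base field $\kk$, are precisely the fibration-preserving group $B_\kk(p)$ and the affine group $A_\kk(p)$, and here Theorem~\ref{MLgen} is the essential input. Once this matching is in place, the degree bookkeeping determining $A_\kk(p)\cap B_\kk(p)$ is routine.
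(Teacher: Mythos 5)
Your overall strategy---invoke the amalgamated-product structure from \cite[Theorem~5.4.5]{BD} as a black box and then match its factors with $A_\kk(p)$ and $B_\kk(p)$---is precisely the route the paper declines to take: the authors state explicitly that they could not find the precise statement they need in the literature and therefore reprove it. The step you yourself flag as ``the main obstacle'', namely the identification of the two abstract factors with $B_\kk(p)$ and $A_\kk(p)$, is not a routine check against the generator list of Theorem~\ref{MLgen}; it is the mathematical content of the theorem. Knowing that $A_\kk(p)\cup B_\kk(p)$ contains a generating set says nothing about which elements lie in which factor of an abstractly given amalgam, and a priori the decomposition coming from the action on the tree of $\A^1$-fibrations could have factors larger than, or positioned differently from, $A_\kk(p)$ and $B_\kk(p)$: one must show, for instance, that every automorphism preserving the fibration $(x,y,z)\mapsto x$ is of the form $\psi_{a,b,c,d,r}$, and that the stabiliser of the relevant vertex or edge consists exactly of restrictions of affine maps of $\A^3_\kk$. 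One would also need to verify that the cited theorem applies over an arbitrary field $\kk$ and to the possibly singular surfaces $D_p$ (when $p$ has multiple roots). The paper fills this gap by a self-contained ping-pong argument on the projective closure $X_p\subset\mathbb{P}^3_\kk$: it proves that every $\beta\in B_\kk(p)\setminus A_\kk(p)$ contracts the boundary curve $C_p\setminus\{q\}$ onto the point $q=[0:0:1:0]$, while every $\alpha\in A_\kk(p)$ preserves $C_p$ and lies in $B_\kk(p)$ as soon as it fixes $q$; a reduced word then cannot act trivially.

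Your computation of $A_\kk(p)\cap B_\kk(p)$ has a smaller but genuine gap of the same flavour: ``each component has degree at most $1$'' is not an intrinsic property of an element of $\Aut_\kk(D_p)$, since the components are only well defined modulo the ideal $(xy-p(z))$. To conclude that $\psi_{a,b,c,d,r}\notin A_\kk(p)$ when $\deg(r)\ge 1$, or when $r$ is a nonzero constant and $\deg(p)\ge 3$, you must rule out the existence of \emph{some} affine representative, not merely observe that the given representative is non-affine. The paper sidesteps this by showing that in these cases the induced birational map of $X_p$ contracts a curve and hence cannot be the restriction of an automorphism of $\mathbb{P}^3_\kk$. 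Your bookkeeping in the positive direction (constant $r$ is admissible when $\deg(p)=2$, and $r=0$ always is) is fine.
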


\begin{remark}\label{Rema:Fibration} One can check, using the birational morphism $D_p\to \A^2_{\kk}$, $(x,y,z)\mapsto (x,z)$, that $B_{\kk}(p)$ consists of all automorphisms of $D_p$ that preserve the fibration $(x,y,z)\mapsto x$.
\end{remark}

\begin{proof}
First, we check that the set $B_{\kk}(p)$ is indeed a subgroup of $\Aut_\kk(D_p)$. For this, it suffices to remark that $\psi_{a,b,c,d,r}$ defines an endomorphism of $\A^3_{\kk}$ satisfying $\psi_{a,b,c,d,r}^*(xy-p(z))=ab(xy-p(z))$, and to compute $\psi_{1,1,1,0,0}=\id_{\A^3_{\kk}}$ and \[\psi_{a,b,c,d,r}\circ \psi_{a',b',c',d',r'}=\psi_{aa',bb',cc',cd'+d,cr'(x)+a'r(a'x)}\] 
for all $\psi_{a,b,c,d,r},\psi_{a',b',c',d',r'}\in B_{\kk}(p)$.

Let us consider the open embedding $\A^3_{\kk}\hookrightarrow \mathbb{P}^3_{\kk}$, $(x,y,z)\mapsto [1:x:y:z]$ and denote by $X_p$ the closure of $D_p$ in $\mathbb{P}^3$. Writing $s=\deg(p)$ and $p(z)=\sum_{i=0}^s p_i z^i$ with $p_0,\ldots,p_s\in\kk$ and $p_s \neq 0$, we obtain that $X_p$ is the hypersurface in $\mathbb{P}^3$ given by the equation $w^{s-2}xy=\sum_{i=0}^s p_i w^{s-i}z^i$. So, $C_p=X_p\setminus D_p$ is either the conic defined by $\{w=0, xy=p_2 z^2\}$ in the case where $s=2$, or the line given by $\{w=z=0\}$ in the case where $s\ge 3$. In both cases, $C_p$ is a curve isomorphic to $\mathbb{P}^1_{\kk}$, that contains the point $q=[0:0:1:0]$.

We will prove the two following statements. 
\begin{enumerate}[leftmargin=*]
\item \label{Claim1}  The birational map $\hat{\beta}$ of $X_p$ induced by any $\beta\in B_{\kk}(p)\setminus A_{\kk}(p)$ contracts $C_p\setminus \{q\}$ onto $q$. 
\item  \label{Claim2} The birational map $\hat{\alpha}$ of $X_p$ induced by any $\alpha\in A_{\kk}(p)$ preserves the curve $C_p$, and if it fixes the point $q$, then $\alpha\in A_{\kk}(p)\cap B_{\kk}(p)$.
\end{enumerate}

Before proving them, let us show that  $\Aut_{\kk}(D_p)=A_{\kk}(p) \Asterisk_{\cap} B_{\kk}(p)$ follows from these two claims. Recall 
 that by Theorem~\ref{MLgen}, $\Aut_{\kk}(D_p)$ is generated by $A_{\kk}(p)$ and $B_{\kk}(p)$. Letting $m\ge 1$, $\alpha_1,\ldots,\alpha_{m-1} \in A_{\kk}(p)\setminus B_{\kk}(p)$ and $\beta_1,\ldots,\beta_m\in B_{\kk}(p)\setminus A_{\kk}(p)$, it then suffices to prove that \[\varphi= \beta_m \circ\alpha_{m-1}\circ\cdots\circ \alpha_1\circ\beta_1\not\in A_{\kk}(p).\] 
For this, we prove by induction on $m$ that the extension of $\varphi\in \Aut_{\kk}(D_p)$ to a birational map $\hat\varphi\in \Bir_{\kk}(X_p)$ contracts $C_p\setminus \{q\}$ onto $q$.
 For $m=1$, this is given by \ref{Claim1}. For $m\geq2$, write $\varphi=\beta_m\circ\alpha_{m-1}\circ\varphi'$. The result follows, since the extensions $\hat\beta_m,\hat\alpha_{m-1},\hat\varphi'$ are elements of $\Bir_{\kk}(X_p)$ such that $\hat\varphi'(C_p\setminus \{q\})=\{q\}$ (by induction hypothesis), $\hat\alpha_{m-1}(q)\in C_p\setminus \{q\}$ (by \ref{Claim2}) and $\hat\beta_m(C_p\setminus \{q\})=\{q\}$ (by \ref{Claim1}). 

We now prove \ref{Claim1}. First, remark that an automorphism $\psi_{a,b,c,d,r}$ is in $A_{\kk}(p)$ if $r=0$, or if $\deg(r)=0$ and $\deg(p)=2$. Therefore, we consider an automorphism $\psi=\psi_{a,b,c,d,r}$ with either $\deg(r)\geq1$, or with $\deg(r)=0$ and $\deg(p)\geq3$.

In both cases, the second component of the birational map $\hat{\psi}\in\Bir_{\kk}(X_p)$ induced by $\psi$ is of degree $D\coloneqq\deg(p)\cdot (\deg(r)+1)-1>\deg(r)+1$, strictly greater than the degree of any other component of the map, and its leading term is $\xi x^D$ for some $\xi\in \kk^*$.

 Extending $\psi$ to a rational map $\tilde{\psi}\colon \mathbb{P}^3\dasharrow \mathbb{P}^3$ by homogenizing its components, we obtain $\tilde{\psi}([0:x:y:z])=[0:0:\xi x^D:0]$ for any $[0:x:y:z] \in C_p$. As every point of $C_p\setminus \{q\}$ satisfies $x\neq0$, the equality $\hat{\psi}(C_p\setminus \{q\})=\{q\}$ follows. This proves~\ref{Claim1}.

We remark that we have proven above that every map $\hat\psi_{a,b,c,d,r}\in\Bir_{\kk}(X_p)$ is not an automorphism if $\deg(r)\geq1$ or if $\deg(r)=0$ and $\deg(p)\geq3$. In particular, it is not an element of $A_{\kk}(p)$ in these cases. Hence, we get the desired description of $A_{\kk}(p)\cap B_{\kk}(p)$.

Finally, it remains to prove \ref{Claim2}. Let $\alpha\in A_{\kk}(p)$. As it is the restriction of an element of $\Aff_3(k)$, which itself is the restriction of an element $\tilde\alpha\in \Aut(\mathbb{P}^3_{\kk})$ that preserves the curve $C_p$, the automorphism $\alpha$ induces a map $\hat{\alpha}\in\Bir_{\kk}(X_p)$ that preserves $C_p$. Suppose that $\hat{\alpha}(q)=q$. Then, the birational morphism $\kappa\colon D_p\to \A^2_{\kk}, (x,y,z)\mapsto (x,z)$ conjugates $\alpha$ to an affine automorphism $\alpha'\in \Aut(\A^2_{\kk})$, because this morphism is the restriction of the projection $\mathbb{P}^3_{\kk}\dasharrow \mathbb{P}^2_{\kk}$, $[w:x:y:z]\mapsto [w:x:z]$ from the point $q$. For each $(x,z)\in \A^2_{\kk}$, the fibre $\kappa^{-1}(x,z)$ consists of one single point if and only if $x\neq 0$. Hence, $\alpha'$ is of the form $(x,z)\mapsto (ax,cz+d+r_0x)$ for some $a,c\in \kk^*$ and some $d,r_0\in\kk$. This gives $\alpha=(ax,by+h(x,z),cz+d+r_0 x)$ for some $b\in \kk^*$ and some $h\in\kk[x,z]$ of degree $1$. As $\alpha^*(xy-p(z))=abxy+axh(x,z)-p(cz+d+r_0 x)$ lies in the ideal generated by $xy-p(z)$, it must be equal to $ab(xy-p(z))$. This implies, by setting $x=0$, that $abp(z)=p(cz+d)$, and then that $h(x,z)=\frac{p(cz+d+r_0x) -abp(z)}{ax}$; hence $\alpha\in B_{\kk}(p)$.
\end{proof}

The aim of the next three results is to give a precise description of the subgroup $A_{\kk}(p)$ of ``affine'' automorphisms of a surface $D_p$. We start by the case where $\deg(p)\geq3$.

\begin{lemma}\label{AcapBdeg3}Let $\kk$ be a field and $p\in\kk[z]$ with $\deg(p)\ge 3$.
Then,
\[A_{\kk}(p) =(A_{\kk}(p)\cap B_{\kk}(p)) \rtimes \langle (y,x,z) \rangle,\]
where 
\[A_{\kk}(p)\cap B_{\kk}(p)=\left\{(ax,by,cz+d) \mid a,b,c\in \kk^*, d\in\kk, abp(z)=p(cz+d) \right\}.\]
\end{lemma}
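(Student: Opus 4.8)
The plan is to read the intersection $N\coloneqq A_\kk(p)\cap B_\kk(p)$ off the previous theorem and then to show that, up to the involution $\sigma\coloneqq(y,x,z)$, every element of $A_\kk(p)$ already lies in $N$. The description of $N$ is almost immediate: Theorem~\ref{thm: BD11, autos of A1-fibred affine surfaces} identifies $A_\kk(p)\cap B_\kk(p)$ with the set of $\psi_{a,b,c,d,r}$ such that $abp(z)=p(cz+d)$ and, since $\deg(p)\ge 3$, such that $r=0$. Putting $r=0$ and using $abp(z)=p(cz+d)$ collapses the middle coordinate of $\psi_{a,b,c,d,0}$ to $by$, so $N$ is exactly the set of maps $(ax,by,cz+d)$ with $abp(z)=p(cz+d)$; as an intersection of subgroups it is a subgroup of $A_\kk(p)$.

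Next I would dispatch the semidirect-product bookkeeping. One checks $\sigma^2=\id$ and $\sigma\notin N$ (its first coordinate is $y$, not a scalar multiple of $x$), so $\langle\sigma\rangle\simeq\Z/2\Z$ meets $N$ trivially. A one-line conjugation computation gives $\sigma\circ(ax,by,cz+d)\circ\sigma=(bx,ay,cz+d)$, which lies in $N$ because $ba\,p(z)=ab\,p(z)=p(cz+d)$; hence $\sigma$ normalises $N$. Granting the surjectivity statement $A_\kk(p)=N\cup N\sigma$, these facts yield $A_\kk(p)=N\rtimes\langle\sigma\rangle$, so the whole weight of the proof falls on that surjectivity.

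The heart of the argument is to analyse an arbitrary $\alpha\in A_\kk(p)$, written as the restriction of an affine automorphism $g=(\ell_1,\ell_2,\ell_3)\in\Aff_3(\kk)$, each $\ell_i$ of degree one. Since $\alpha$ preserves $D_p$ and $xy-p(z)$ is irreducible, $g^*(xy-p(z))=\ell_1\ell_2-p(\ell_3)$ is a polynomial multiple of $xy-p(z)$. Here the hypothesis $\deg(p)\ge 3$ does the crucial work: as $\deg(\ell_1\ell_2)\le 2$ while $p(\ell_3)$ has degree $\deg(p)\ge 3$, the multiple must be a constant $\lambda\in\kk^*$, and comparing the top-degree forms gives $(c_1x+c_2y+c_3z)^{\deg(p)}=\lambda z^{\deg(p)}$, where $c_1x+c_2y+c_3z$ is the linear part of $\ell_3$. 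Unique factorisation in $\kk[x,y,z]$ then forces $\ell_3=cz+d$ with $c\in\kk^*$, and substituting back leaves
\[\ell_1\ell_2=\lambda xy+\bigl(p(cz+d)-\lambda p(z)\bigr),\]
whose right-hand side involves, besides $\lambda xy$, only a polynomial in $z$.

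The last step is a factorisation of this identity: matching monomials, the vanishing of the $x^2,y^2,xz,yz,x,y$ coefficients of $\ell_1\ell_2$ together with $\lambda\ne 0$ forces $\{\ell_1,\ell_2\}=\{ax,by\}$ as an unordered pair, whereupon the parenthesised polynomial vanishes and $ab\,p(z)=p(cz+d)$. If $(\ell_1,\ell_2)=(ax,by)$ then $\alpha\in N$; if $(\ell_1,\ell_2)=(by,ax)$ then $\alpha=(by,ax,cz+d)=\sigma\circ(ax,by,cz+d)\in\sigma N=N\sigma$. This establishes $A_\kk(p)=N\cup N\sigma$ and completes the proof. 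I expect the main obstacle to be precisely the degree bookkeeping of the third paragraph, where $\deg(p)\ge 3>2$ must be used twice—to force the multiple to be a scalar and to pin down $\ell_3=cz+d$—whereas the final factorisation is routine once $\ell_3$ is known.
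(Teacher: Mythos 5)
Your proposal is correct and follows essentially the same route as the paper: read off the intersection from Theorem~\ref{thm: BD11, autos of A1-fibred affine surfaces}, check that $(y,x,z)$ normalises it, and then use $\deg(p)\ge 3$ versus $\deg(\ell_1\ell_2)=2$ to force $\ell_3=cz+d$ and $\ell_1\ell_2=\lambda xy$. The only cosmetic difference is in the last step, where the paper notes that $\lambda xy+p(cz+d)-\lambda p(z)$ is irreducible unless the $z$-part vanishes, while you match monomial coefficients; both are routine and equivalent.
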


\begin{proof}By Theorem~\ref{thm: BD11, autos of A1-fibred affine surfaces}, we have
\[A_{\kk}(p)\cap B_{\kk}(p)=\left\{(ax,by,cz+d) \mid a,b,c\in \kk^*, d\in\kk, abp(z)=p(cz+d) \right\}.\]

As the involution $(y,x,z)$ is an element of $A_{\kk}(p)\setminus B_{\kk}(p)$ that normalises $A_{\kk}(p)\cap B_{\kk}(p)$, the subgroup of $A_{\kk}(p)$ generated by $A_{\kk}(p)\cap B_{\kk}(p)$ and $(y,x,z)$ is isomorphic to $(A_{\kk}(p)\cap B_{\kk}(p)) \rtimes \langle (y,x,z) \rangle$. It remains to see that every element $\alpha\in A_{\kk}(p)$ is in that subgroup, i.e., is of the form $\alpha=(ax,by,cz+d)$ or $\alpha=(ay,bx,cz+d)$ for some $a,b,c\in\kk^*$ and $d\in\kk$. 

Write $\alpha=(\ell_1,\ell_2,\ell_3)$, where $\ell_1,\ell_2,\ell_3\in\kk[x,y,z]$ are of degree $1$. Then, 
\[\ell_1\ell_2-p(\ell_3)=\alpha^*(xy-p(z))=\mu(xy-p(z))\]
for some $\mu\in \kk^*$. Since $\deg(p)\geq3$ and $\deg(\ell_1\ell_2)=2$, we obtain that $\ell_3=cz+d$ for some $c\in \kk^*, d\in\kk$ and we have that 
\[\ell_1\ell_2=\mu xy+p(cz+d)-\mu p(z).\] 
Observe that the right-hand side of the above equality is an irreducible polynomial, unless $p(cz+d)-\mu p(z)=0$. Thus, $p(cz+d)=\mu p(z)$ and $\ell_1\ell_2=\mu xy$. In turn, the latter equality
implies that either $\ell_1=ax$ and $\ell_2=by$ or $\ell_1=ay$ and $\ell_2=bx$ for some $a,b\in\kk^*$ with $ab=\mu$.
\end{proof}

We now investigate the case where $\deg(p)=2$.

\begin{lemma}\label{PGL2Z2}
Let $\kk$ be a field of characteristic not $2$, and let $p=z^2-1=(z-1)(z+1)\in\kk[z]$. 
The surface $D_p=\Spec(\kk[x,y,z]/(xy-p(z)))$ is isomorphic to $(\mathbb{P}^1_{\kk}\times \mathbb{P}^1_{\kk})\setminus \Delta$, where $\Delta$ denotes the diagonal, via
\[\begin{array}{ccc}
(\mathbb{P}^1_{\kk}\times \mathbb{P}^1_{\kk})\setminus \Delta&\iso & D_p \\
([a:b],[c:d])&\mapsto &\left(\frac{2ac}{ad-bc},\frac{2bd}{ad-bc},\frac{ad+bc}{ad-bc}\right).
\end{array}\]
Moreover, $A_{\kk}(p)$ is isomorphic to $ \mathrm{PGL}_2(\kk)\times \langle \sigma \rangle$, where $\sigma=(-x,-y,-z)\in \Aut_{\kk}(D_p)$ acts on $\mathbb{P}^1_{\kk}\times \mathbb{P}^1_{\kk}$ via the exchange of the two factors and where $\mathrm{PGL}_2(\kk)$ acts diagonally on $(\mathbb{P}^1_{\kk}\times \mathbb{P}^1_{\kk})\setminus \Delta$ and via
\[\begin{array}{ccc}
\mathrm{PGL}_2(\kk) \times D_p & \to & D_p\\
 \left(\begin{smallmatrix} \alpha & \beta \\ \gamma & \delta\end{smallmatrix}\right), \left(\begin{smallmatrix} x \\ y\\ z\end{smallmatrix}\right) &\mapsto &  \frac{1}{\alpha\delta-\beta\gamma}\left(\begin{smallmatrix} \alpha^2& \beta^2& 2\alpha\beta \\ \gamma^2& \delta^2& 2\gamma\delta \\ \alpha\gamma &\beta\delta& \alpha\delta+\beta\gamma \end{smallmatrix}\right) \cdot  \left(\begin{smallmatrix} x \\ y\\ z\end{smallmatrix}\right) \end{array}\]
 on $D_p$.
\end{lemma}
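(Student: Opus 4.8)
The plan is to prove the two assertions in turn, using the explicit isomorphism to verify the group-theoretic claims.

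\emph{The isomorphism.} First I would check that the displayed map lands in $D_p$: a direct computation gives both $xy$ and $z^2-1$ equal to $\tfrac{4abcd}{(ad-bc)^2}$, and the map is defined exactly where $ad-bc\neq 0$, i.e.\ off $\Delta$. To see it is an isomorphism I would exhibit the inverse $(x,y,z)\mapsto([a:b],[c:d])$ with $[a:b]=[x:z-1]=[z+1:y]$ and $[c:d]=[x:z+1]=[z-1:y]$; the two representatives of each point agree because $xy=(z-1)(z+1)$ holds on $D_p$, and on $D_p$ at least one representative of each pair is nonzero, so this defines a morphism $D_p\to(\mathbb{P}^1_\kk\times\mathbb{P}^1_\kk)\setminus\Delta$. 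Checking that the two maps are mutually inverse is then routine.

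\emph{The subgroup $\mathrm{PGL}_2(\kk)\times\langle\sigma\rangle$.} Writing $S=\left(\begin{smallmatrix} x & z\\ z& y\end{smallmatrix}\right)$, so that $Q\coloneqq\det S=xy-z^2$ and $D_p=\{\det S=-1\}$, I would recognise the displayed $\mathrm{PGL}_2$-action as the symmetric square $S\mapsto \tfrac{1}{\det g}\,gSg^{T}$: expanding this product reproduces the matrix $M_g$ in the statement, and $\det\!\big(\tfrac{1}{\det g}gSg^{T}\big)=\det S$ shows that $M_g$ is a linear automorphism of $\A^3_\kk$ fixing $Q$, hence restricting to $D_p$; the same holds for $\sigma=-\mathrm{id}$. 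The kernel of $\mathrm{PGL}_2(\kk)\to\mathrm{GL}_3(\kk)$ is scalar (so the map is injective), while $\sigma=-I$ is central with $\det(-I)=-1$, whereas $\det M_g=1$ (the normalised symmetric square), so $\sigma$ is not in the image and $\langle M_g, \sigma\rangle\cong \mathrm{PGL}_2(\kk)\times\langle\sigma\rangle\subseteq A_\kk(p)$. Substituting $S=\tfrac{1}{ad-bc}(uv^{T}+vu^{T})$ with $u=\left(\begin{smallmatrix}a\\ b\end{smallmatrix}\right)$, $v=\left(\begin{smallmatrix}c\\ d\end{smallmatrix}\right)$ then shows that $M_g$ corresponds to the diagonal action $u,v\mapsto gu,gv$ and $\sigma$ to the exchange $u\leftrightarrow v$, matching the claimed descriptions.

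\emph{The reverse inclusion.} Let $f\in A_\kk(p)$ be the restriction of $g=(\ell_1,\ell_2,\ell_3)\in\Aff_3(\kk)$ and write $\ell_i=L_i+c_i$ with $L_i$ linear and $c_i\in\kk$. Since $g$ preserves the defining ideal and has degree at most two, $g^*(xy-z^2+1)=\mu(xy-z^2+1)$ for some $\mu\in\kk^*$. Splitting this identity by degree, its degree-one part $c_2L_1+c_1L_2-2c_3L_3=0$ forces $c_1=c_2=c_3=0$, as the $L_i$ are linearly independent and $2$ is invertible; whence $\mu=1$ from the constant part and $L_1L_2-L_3^2=xy-z^2$. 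Thus $g$ is linear and lies in $\mathrm{O}(Q)$; conversely every element of $\mathrm{O}(Q)$ restricts to $A_\kk(p)$, and the restriction is faithful because no nonzero linear form lies in $(xy-z^2+1)$. Hence $A_\kk(p)=\mathrm{O}(Q)$.

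\emph{Conclusion and the main obstacle.} Since $\sigma=-I$ is central of determinant $-1$, one has $\mathrm{O}(Q)=\mathrm{SO}(Q)\sqcup(-I)\,\mathrm{SO}(Q)=\mathrm{SO}(Q)\times\langle\sigma\rangle$, so it remains to identify $\mathrm{SO}(Q)$ with the image of $\mathrm{PGL}_2(\kk)$. I expect this surjectivity to be the crux: it is the exceptional isomorphism $\mathrm{PGL}_2\cong\mathrm{SO}_3$ for the split ternary form $Q$, which one may either cite or prove geometrically by noting that each $g\in\mathrm{O}(Q)$ extends to an automorphism of the projective quadric $X_p\cong\mathbb{P}^1_\kk\times\mathbb{P}^1_\kk$ preserving the hyperplane at infinity, and hence the conic $C_p\cong\Delta$. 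This gives an injection $A_\kk(p)=\mathrm{O}(Q)\hookrightarrow\Aut(\mathbb{P}^1_\kk\times\mathbb{P}^1_\kk,\Delta)=\mathrm{PGL}_2(\kk)\times\mathbb{Z}/2\mathbb{Z}$ whose image already contains the diagonal $\mathrm{PGL}_2$ and the swap by the second step, forcing equality and yielding $A_\kk(p)\cong\mathrm{PGL}_2(\kk)\times\langle\sigma\rangle$ with the stated actions.
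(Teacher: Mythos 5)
Your proof is correct, but it takes a genuinely different route from the paper's. The paper compactifies $D_p$ inside $\mathbb{P}^3_{\kk}$ via $(x,y,z)\mapsto[x:y:z+1:z-1]$, so that $D_p=Q\setminus H$ for the quadric $Q\colon x_0x_1=x_2x_3$ and the hyperplane $H\colon x_2=x_3$; the isomorphism with $(\mathbb{P}^1_{\kk}\times\mathbb{P}^1_{\kk})\setminus\Delta$ is then read off from the Segre isomorphism $\mathbb{P}^1_{\kk}\times\mathbb{P}^1_{\kk}\simeq Q$, and $A_{\kk}(p)$ is identified in one stroke with the automorphisms of $\mathbb{P}^3_{\kk}$ preserving $Q$ and $H$, hence with $\Aut(\mathbb{P}^1_{\kk}\times\mathbb{P}^1_{\kk},\Delta)=\mathrm{PGL}_2(\kk)\times\langle\sigma\rangle$, from which the explicit matrices are obtained by transport of structure. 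You instead verify the isomorphism directly with an explicit inverse, realise the displayed $\mathrm{PGL}_2$-action as the normalised symmetric square $S\mapsto\tfrac{1}{\det g}\,gSg^{T}$ on $S=\left(\begin{smallmatrix} x & z\\ z& y\end{smallmatrix}\right)$, and prove by an elementary degree-splitting that every element of $A_{\kk}(p)$ is linear and preserves $xy-z^2$, so that $A_{\kk}(p)=\mathrm{O}(Q)$; only at the very end do you need the surjectivity of $\mathrm{PGL}_2(\kk)\to\mathrm{SO}(Q)$, which you correctly identify as the crux, and your geometric variant of that step (extending to the projective quadric and injecting into $\Aut(\mathbb{P}^1_{\kk}\times\mathbb{P}^1_{\kk},\Delta)$) is essentially where the two arguments reconverge. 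Your route buys a completely explicit verification of the stated formulas --- the identification of $\sigma$ with $-I$ and of the swap with $u\leftrightarrow v$ in $S=\tfrac{1}{ad-bc}(uv^{T}+vu^{T})$ is a clean way to see at once that the formula defines an action and that it matches the claimed action on $\mathbb{P}^1_{\kk}\times\mathbb{P}^1_{\kk}$ --- and the intermediate statement $A_{\kk}(p)=\mathrm{O}(Q)$ is more elementary than the paper's appeal to projective automorphisms; the paper's route is shorter because the linearity of affine automorphisms of $D_p$ is absorbed into the observation that they extend to $\mathbb{P}^3_{\kk}$ preserving $H$ and $Q$. All the individual steps you sketch (the degree-one part forcing $c_1=c_2=c_3=0$ in characteristic $\neq 2$, $\det M_g=1$ versus $\det\sigma=-1$, and $\mathrm{O}(Q)=\mathrm{SO}(Q)\times\langle -I\rangle$ in odd dimension) check out.
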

\begin{proof}
As $\mathrm{char}(\kk)\neq2$, we may consider $\A^3_{\kk}$ embedded into $\mathbb{P}^3_{\kk}$, via the open embedding $(x,y,z)\mapsto [x:y:z+1:z-1]$, and obtain that $D_p=Q\setminus H$, where $Q,H\subset \mathbb{P}^3_{\kk}$ are given respectively by $x_0x_1=x_2x_3$ and $x_2=x_3$.

We then use the classical isomorphism $\mathbb{P}^1_{\kk}\times  \mathbb{P}^1_{\kk}\iso Q$, $([a:b],[c:d])\mapsto [ac:bd:ad:bc]$, which restricts to the isomorphism $(\mathbb{P}^1_{\kk}\times \mathbb{P}^1_{\kk})\setminus \Delta\iso Q\setminus H=D_p$ described in the statement.

By definition, $A_{\kk}(p)=\left\{f\in\Aut_\kk(D_p)\mid \exists g\in \Aff_3(\kk) \colon f=g|_{D_p}\right\}$ corresponds to the group of automorphisms of $\mathbb{P}^3$ which preserve $H$ and $Q$, and thus to the group of automorphisms of $Q$ that preserve $Q\cap H$; it is conjugate  via the above isomorphism to the group of automorphisms of $\mathbb{P}^1_{\kk}\times \mathbb{P}^1_{\kk}$ that preserve the diagonal.

As $\Aut(\mathbb{P}^1_{\kk}\times \mathbb{P}^1_{\kk})=(\mathrm{PGL}_2(\kk)\times \mathrm{PGL}_2(\kk))\rtimes \langle \sigma \rangle$, where $\sigma$ is the exchange of the two factors, that corresponds to $(-x,-y,-z)\in \Aut_{\kk}(D_p)$, we obtain that $A_{\kk}(p)$ corresponds, via the isomorphism, to the group $\mathrm{PGL}_2(\kk)\times \langle \sigma \rangle$, where $\mathrm{PGL}_2(\kk)$ acts diagonally on $\mathbb{P}^1_{\kk}\times \mathbb{P}^1_{\kk}$. Conjugating the action gives the explicit description of the action of $\mathrm{PGL}_2(\kk)$ on $D_p$.
\end{proof}
\begin{lemma}\label{PGL2k}
Let $\kk$ be a field of characteristic  not $2$, and let $p=z^2\in \kk[z]$. The group $A_{\kk}(p)$ is isomorphic to $ \mathrm{PGL}_2(\kk)\times \kk^*$, and the action of this latter group on the  surface $D_p=\Spec(\kk[x,y,z]/(xy-z^2))$ is 
\[\begin{array}{ccc}
(\mathrm{PGL}_2(\kk)\times \kk^*) \times D_p & \to & D_p\\
\left( \left(\begin{smallmatrix} \alpha & \beta \\ \gamma & \delta\end{smallmatrix}\right), \mu \right), \left(\begin{smallmatrix} x \\ y\\ z\end{smallmatrix}\right) &\mapsto &  \frac{\mu}{\alpha\delta-\beta\gamma}\left(\begin{smallmatrix} \alpha^2& \beta^2& {2}\alpha\beta \\ \gamma^2& \delta^2& {2}\gamma\delta \\ \alpha\gamma &\beta\delta& \alpha\delta+\beta\gamma \end{smallmatrix}\right) \cdot  \left(\begin{smallmatrix} x \\ y\\ z\end{smallmatrix}\right). \end{array}\]
\end{lemma}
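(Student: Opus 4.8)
The plan is to identify $A_{\kk}(p)$ with the group of similitudes of the ternary quadratic form $Q=xy-z^2$, and then to recognise this similitude group as $\PGL_2(\kk)\times\kk^*$ through the symmetric square representation.

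First I would show that every element of $A_{\kk}(p)$ is \emph{linear}. By definition such an element is the restriction of some $g=(\ell_1,\ell_2,\ell_3)\in\Aff_3(\kk)$ with $g(D_p)=D_p$; since $D_p$ is irreducible with principal radical ideal $(xy-z^2)$, this forces $\ell_1\ell_2-\ell_3^2=\lambda(xy-z^2)$ for some $\lambda\in\kk^*$, exactly as in the proof of Lemma~\ref{AcapBdeg3}. The right-hand side is homogeneous of degree $2$, so writing $\ell_i=L_i+d_i$ with $L_i$ linear and $d_i\in\kk$, the vanishing of the part of degree $\le 1$ reads $d_2L_1+d_1L_2-2d_3L_3=0$ and $d_1d_2-d_3^2=0$. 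As $g$ is an automorphism, $L_1,L_2,L_3$ are linearly independent, whence $d_1=d_2=d_3=0$ and $g$ is linear. Moreover $D_p$ contains the affine frame $(0,0,0),(1,0,0),(0,1,0),(1,1,1)$, so the restriction map $\{g\in\Aff_3(\kk)\mid g(D_p)=D_p\}\to\Aut_\kk(D_p)$ is injective. Combining these two points identifies $A_{\kk}(p)$ with the similitude group $\mathrm{GO}(Q)=\{M\in\GL_3(\kk)\mid M^*(Q)=\lambda Q \text{ for some }\lambda\in\kk^*\}$, where $Q=xy-z^2$ is nondegenerate because $\mathrm{char}(\kk)\neq2$.

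Next I would check that the displayed formula defines a homomorphism $\Phi\colon\PGL_2(\kk)\times\kk^*\to\mathrm{GO}(Q)$. Identifying $(x,y,z)$ with the symmetric matrix $S=\left(\begin{smallmatrix}x&z\\ z&y\end{smallmatrix}\right)$, an element $A=\left(\begin{smallmatrix}\alpha&\beta\\ \gamma&\delta\end{smallmatrix}\right)$ acts by $S\mapsto ASA^{\mathrm t}$, and a direct computation shows that the induced linear action on $(x,y,z)$ is precisely the matrix $\mathrm{Sym}^2(A)$ written in the statement (without the scalar factor), while $\det(ASA^{\mathrm t})=(\det A)^2\det S$ gives $(\mathrm{Sym}^2 A)^*(Q)=(\det A)^2 Q$. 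Since also $\det\mathrm{Sym}^2(A)=(\det A)^3$, the matrix $\tfrac{1}{\det A}\mathrm{Sym}^2(A)$ both preserves $Q$ and has determinant $1$, hence lies in $\mathrm{SO}(Q)$; scaling further by $\mu$ multiplies $Q$ by $\mu^2$, so $\Phi$ maps into $\mathrm{GO}(Q)$. It is a homomorphism because $A\mapsto ASA^{\mathrm t}$ (equivalently $\mathrm{Sym}^2$) is one, and it is well defined on $\PGL_2(\kk)$ because replacing $A$ by $tA$ leaves $\tfrac{1}{\det A}\mathrm{Sym}^2(A)$ unchanged. Injectivity is immediate: if the image is the identity then $\mathrm{Sym}^2(A)$ is scalar, which (reading off $\beta^2=\gamma^2=0$ and $\alpha^2=\delta^2=\alpha\delta$) forces $A$ scalar and then $\mu=1$.

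It remains to prove that $\Phi$ is surjective, which I expect to be the main obstacle. Writing $M^*(Q)=\lambda Q$ and passing to Gram matrices, taking determinants gives $\det(M)^2=\lambda^3$, so $\lambda$ is a square, say $\lambda=\tau^2$; then $\tau^{-1}M\in\mathrm{O}(Q)$, and since the dimension is odd we may flip the sign of $\tau$ to arrange $\tau^{-1}M\in\mathrm{SO}(Q)$. As the scalars $\kk^*\cdot I$ are central and meet $\mathrm{SO}(Q)$ only in $I$, this yields an internal direct product $\mathrm{GO}(Q)=(\kk^*\cdot I)\times\mathrm{SO}(Q)$. Finally, because $Q=xy-z^2$ is isotropic (split), the symmetric square realises the classical exceptional isomorphism $\PGL_2(\kk)\iso\mathrm{SO}(Q)$; granting this, $\Phi$ is the resulting isomorphism $\PGL_2(\kk)\times\kk^*\iso\mathrm{GO}(Q)\cong A_{\kk}(p)$, and by construction it transports the action to the one displayed in the statement. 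The delicate point over a non-closed field is exactly the surjectivity of $\PGL_2(\kk)\to\mathrm{SO}(Q)$ on $\kk$-points, for which the splitness of $Q$ (equivalently, the existence of a $\kk$-point on the conic $\{Q=0\}$) is essential; alternatively this can be obtained geometrically, by viewing $\overline{D_p}\subset\mathbb{P}^3_\kk$ as the cone $\{xy=z^2\}$, noting that an automorphism of $A_{\kk}(p)$ fixes its vertex and preserves the conic at infinity $\{w=0\}\cap\overline{D_p}\cong\mathbb{P}^1_\kk$, whose automorphism group is $\PGL_2(\kk)$, the kernel of restriction to this conic being the central scalars $\kk^*$.
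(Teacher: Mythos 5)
Your proposal is correct, and it ends up at the same identification as the paper, but the two key reductions are packaged differently. For linearity, the paper observes that any $f\in A_{\kk}(p)$ must fix the unique singular point $(0,0,0)$ of $D_p$, so that $f=g|_{D_p}$ with $g\in\GL_3(\kk)$; your alternative — comparing homogeneous components in $\ell_1\ell_2-\ell_3^2=\lambda(xy-z^2)$ and using the linear independence of $L_1,L_2,L_3$ together with $\mathrm{char}(\kk)\neq2$ — works just as well and is a minor variation. The real difference is in the surjectivity step. The paper restricts $g$ to the smooth conic $\Gamma=\{xy=z^2\}\subset\mathbb{P}^2_{\kk}$, parametrised over $\kk$ by $[u:v]\mapsto[u^2:v^2:uv]$, reads off an element $R\in\Aut(\Gamma)=\PGL_2(\kk)$, and concludes that $f\circ\varphi((R,1))^{-1}$ acts trivially on $\Gamma$, hence on $\mathbb{P}^2$, hence is a homothety. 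Your primary route — the internal decomposition $\mathrm{GO}(Q)=(\kk^*\cdot I)\times\mathrm{SO}(Q)$ deduced from $\det(M)^2=\lambda^3$ (which is correct: $\lambda=(\lambda^2/\det M)^2$ is a square, and the sign of $\tau$ can be adjusted in odd dimension), followed by the exceptional isomorphism $\PGL_2(\kk)\iso\mathrm{SO}(Q)(\kk)$ for the split ternary form — is a genuinely different framing, and you rightly flag that the surjectivity of $\PGL_2\to\mathrm{SO}(Q)$ on $\kk$-points is the one step not covered by your formal manipulations: it holds because $\PGL_2\to\mathrm{SO}_3$ is an isomorphism of algebraic groups (not merely an isogeny), but it must either be cited or proved, and your fallback geometric argument (restriction to the conic at infinity, with kernel the central scalars) is precisely the paper's proof of that point. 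What your approach buys is the clean conceptual statement $A_{\kk}(p)\cong\mathrm{GO}(xy-z^2)\cong\kk^*\times\PGL_2(\kk)$; what the paper's buys is self-containedness, resting only on the elementary fact that a nontrivial automorphism of $\mathbb{P}^2$ cannot fix a smooth conic pointwise.
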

\begin{proof}
As observed in Lemma~\ref{PGL2Z2}, the above formula gives an embedding $\mathrm{PGL}_2(\kk)\hookrightarrow \mathrm{GL}_3(\kk)$ whose action on $\A^3_{\kk}$ preserves $xy-z^2-1$, and thus also $xy-z^2$. Its image moreover lies in $\mathrm{SL}_3(\kk)$. The action of $\kk^*$ on $\A^3_{\kk}$ by homotheties gives another embedding $\kk^*\hookrightarrow \mathrm{GL}_3(\kk)$. Since both groups commute and have a trivial intersection, we get an embedding $\varphi\colon\mathrm{PGL}_2(\kk)\times \kk^*\hookrightarrow A_{\kk}(p)$.

It remains to see that every element $f\in A_{\kk}(p)$ lies in the image of $\varphi$.  As it is the only singular point of $D_p$, the point $(0,0,0)\in D_p\subset \A^3_{\kk}$   is fixed by any $f\in A_{\kk}(p)$. Hence, $f=g|_{D_p}$ for some $g\in \mathrm{GL}_3(\kk)$ whose action on $\mathbb{P}^2$ preserves the conic $\Gamma$ given by $xy=z^2$, isomorphic to $\mathbb{P}^1$ via $[u:v]\mapsto [u^2:v^2:uv]$.  The induced action of $g$ on $\mathbb{P}^1$ is of the form $[u:v]\mapsto [\alpha u+\beta v:\gamma u+\delta v]$ for some $R=\left(\begin{smallmatrix} \alpha & \beta \\ \gamma & \delta\end{smallmatrix}\right)$ in $\mathrm{PGL}_2(\kk)$. Hence, the action of $g$  on $\mathbb{P}^1$ coincides with that of the image of $R$ in $\mathrm{PGL}_2(\kk)\subset \mathrm{SL}_3(\kk)$, i.e.,~with that of $\varphi((R,1))$. Hence, the  map $f\circ \varphi((R,1))^{-1}\in A_{\kk}(p)$ acts trivially on $\Gamma$ and thus on $\mathbb{P}^2$ (a nontrivial automorphism of $\mathbb{P}^2$ only fixes points and lines), and is then a homothety.
\end{proof}
\subsection{Existence of real forms}
\begin{proposition}\label{Prop:ExistencerealstructureDanielewski}
Let $p\in \C[z]\setminus \C$ be a nonconstant polynomial. The following conditions are equivalent:
\begin{enumerate}[leftmargin=*]
\item\label{ExRD1}
The complex affine surface $D_p=\Spec(\C[x,y,z]/(xy-p(z)))$ admits a real structure.
\item\label{ExRD2}
There exist $a,\lambda \in \C^*$, $b\in \C$, such that $\lambda p(az+b)\in \R[z]$.
\item\label{ExRD3}
There exists $q\in \R[z]$ such that the complex affine surfaces $D_p=\Spec(\C[x,y,z]/(xy-p(z)))$ and $D_q=\Spec(\C[x,y,z]/(xy-q(z)))$ are isomorphic.
\end{enumerate}
\end{proposition}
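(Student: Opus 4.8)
The plan is to prove the cycle of implications $(\ref{ExRD3})\Rightarrow(\ref{ExRD1})\Rightarrow(\ref{ExRD2})\Rightarrow(\ref{ExRD3})$, since the last two arrows are where the real content sits and the first is essentially free.

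For $(\ref{ExRD3})\Rightarrow(\ref{ExRD1})$, if $D_p\cong D_q$ over $\C$ with $q\in\R[z]$, then it suffices to exhibit a real structure on $D_q$ and transport it along the isomorphism. But $D_q$ is defined by an equation $xy-q(z)$ with real coefficients, so the standard complex conjugation $\rho\colon(x,y,z)\mapsto(\overline{x},\overline{y},\overline{z})$ of $\A^3_\C$ restricts to an anti-regular involution of $D_q$ (the defining ideal is stable under $\rho$ precisely because $q\in\R[z]$). This gives a real structure on $D_q$, hence on $D_p$.

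For $(\ref{ExRD1})\Rightarrow(\ref{ExRD2})$, this is the heart of the matter. Suppose $D_p$ carries a real structure $\mu$, an anti-regular involution. The idea is to compare $\mu$ with the standard conjugation $\rho$ using the amalgamated product structure of $\Aut_\C(D_p)=A_\C(p)\Asterisk_{\cap}B_\C(p)$ from Theorem~\ref{thm: BD11, autos of A1-fibred affine surfaces}. Writing $\mu=\nu\circ\rho$ with $\nu\in\Aut_\C(D_p)$, the cocycle condition $\mu\circ\mu=\id$ forces $\nu\circ\overline{\nu}=\id$, i.e.\ $\nu\in Z^1(\Aut_\C(D_p))$. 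The key geometric observation is that $\mu$ must preserve the intrinsic structure of $D_p$: in particular it sends the $\A^1$-fibration $(x,y,z)\mapsto x$ to an $\A^1$-fibration, and, after possibly composing with the involution $(y,x,z)$, it respects the identifications that let one read off $p$ up to the transformations $z\mapsto az+b$ and scaling. Concretely, the existence of $\mu$ forces the defining polynomial $p$ to be equivalent, via the allowed substitution $z\mapsto az+b$ and multiplication by $\lambda\in\C^*$, to a polynomial fixed by complex conjugation, which is exactly the condition $\lambda p(az+b)\in\R[z]$. The main obstacle is making this ``reading off'' precise: one must use that any real structure, being an anti-regular automorphism of the underlying complex surface together with the conjugation on $\Spec(\C)$, conjugates the fibration structure to its own conjugate, and then invoke Daigle's criterion (Theorem~\ref{theo:DanielewskiIsoClasses}) applied to $D_p$ and $D_{\overline{p}}$ to extract the scalars $a,\lambda,b$.

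Finally, $(\ref{ExRD2})\Rightarrow(\ref{ExRD3})$ is a direct application of Theorem~\ref{theo:DanielewskiIsoClasses}. If $\lambda p(az+b)=q(z)\in\R[z]$ for some $a,\lambda\in\C^*$, $b\in\C$, then setting this $q$ we have $p(az+b)=\lambda^{-1}q(z)$, which is precisely the relation in Theorem~\ref{theo:DanielewskiIsoClasses} witnessing $D_p\cong D_q$ over $\C$, with $q\in\R[z]$ as required. I expect $(\ref{ExRD1})\Rightarrow(\ref{ExRD2})$ to be the only step needing genuine care, while the other two reduce to the cited isomorphism criterion and the observation that real coefficients yield an honest conjugation.
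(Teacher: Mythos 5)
Your overall architecture (the cycle $(3)\Rightarrow(1)\Rightarrow(2)\Rightarrow(3)$, with $(3)\Rightarrow(1)$ by transporting the standard conjugation and $(2)\Leftrightarrow(3)$ via Theorem~\ref{theo:DanielewskiIsoClasses}) matches the paper, and those two arrows are fine. The problem is in $(1)\Rightarrow(2)$, where your argument has a genuine gap at exactly the point where the real work happens. Composing the real structure with the standard conjugation $\rho$ gives an isomorphism $D_p\to D_{\overline{p}}$, and Daigle's criterion then yields scalars with $\overline{p}(z)=\lambda p(az+b)$. But this is \emph{not} ``exactly the condition $\lambda p(az+b)\in\R[z]$'', as you assert: the relation only says that $p$ and $\overline{p}$ lie in the same equivalence class under affine substitution and scaling, not that this class contains a real representative with those particular $a,\lambda,b$. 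For instance, $p(z)=\mathbf{i}z^2$ satisfies $\overline{p}=-p$, so Daigle produces $\lambda=-1$, $a=1$, $b=0$, and $\lambda p(az+b)=-\mathbf{i}z^2\notin\R[z]$; one must choose \emph{different} scalars to realify $p$. Passing from ``$p$ is equivalent to $\overline{p}$'' to ``$p$ is equivalent to a real polynomial'' is a descent step that requires proof.

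The paper closes this gap as follows: first normalise $p$ to reduced form and dispose of the monomial case separately; for $p$ reduced and not a monomial, the relation $\overline{p}(z)=\lambda p(az+b)$ forces $b=0$, $\lambda=a^{-d}$ and $\lvert a\rvert=1$; then one picks $\alpha$ with $\alpha^2=a$ and checks by a direct computation that $q(z)=\alpha^{-d}p(\alpha z)$ satisfies $\overline{q}=q$. This square-root extraction is the essential content of $(1)\Rightarrow(2)$ and is missing from your proposal. Separately, your appeals to the amalgamated product structure of $\Aut_\C(D_p)$, to preservation of the $\A^1$-fibration, and to composing with $(y,x,z)$ play no role in the argument and can be dropped; the proof needs only Daigle's criterion plus the descent computation above.
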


\begin{proof}
The equivalence between \ref{ExRD2} and \ref{ExRD3} follows from Theorem~\ref{theo:DanielewskiIsoClasses}.

The implication $\ref{ExRD3}\Rightarrow \ref{ExRD1}$ follows from the fact that $(x,y,z)\mapsto (\overline{x},\overline{y},\overline{z})$ is a real structure on $D_q=\Spec(\C[x,y,z]/(xy-q(z)))$, since $q\in \R[z]$.

It remains to prove $\ref{ExRD1}\Rightarrow \ref{ExRD2}$. Applying a suitable affine automorphism of the form $(\lambda x,y,az+b)$ we can assume that $p$ is in reduced form. Let $d=\deg(p)\geq1$. Since $\ref{ExRD2}$ is satisfied when $p=z^d$, we may further assume that $p$ is not a monomial.

We take a real structure on $D_p$ which we can write as \[(x,y,z)\mapsto (\overline{f_1(x,y,z)},\overline{f_2(x,y,z)},\overline{f_3(x,y,z)}),\] for some polynomials $f_1,f_2,f_3\in \C[x,y,z]$. This provides an isomorphism of complex affine surfaces
\[\begin{array}{rcl}
D_p & \mapsto & D_{\overline{p}}\\
(x,y,z)&\mapsto & (f_1(x,y,z),f_2(x,y,z),f_3(x,y,z)).\end{array}\]
Hence, by Theorem~\ref{theo:DanielewskiIsoClasses}, there exist $a,\lambda \in \C^*$ and $b\in \C$ such that $\overline{p}(z)=\lambda p(az+b)$. Since $p$ is in reduced form and is not a monomial, we have $b=0$, $\lambda=a^{-d}$ and $\lvert a\rvert=1$. Let $\alpha\in\C^*$ be such that $\alpha^2=a$. We now conclude the proof by showing that the polynomial $q(z)=\alpha^{-d}p(\alpha z)$ lies in $\R[z]$.

Indeed, since $\rvert a \lvert=\rvert \alpha \lvert=1$, we get
\[\overline{q}(z)=\overline{\alpha^{-d}}\cdot\overline{p}(\overline{\alpha} z)=\alpha^d\lambda p(a\overline{\alpha} z)=\alpha^d a^{-d} p(a\alpha^{-1} z)=\alpha^{-d}p(\alpha z)=q(z),\]
as desired.
\end{proof}

By Proposition~\ref{Prop:ExistencerealstructureDanielewski}, we may assume a  surface $D_p$ to have no real forms or its defining polynomial $p$ to lie in $\mathbb{R}[z]$. We shall classify the number of real forms for the latter case. We first prove that if $p \in \R[z]$, then both subgroups $A_\C(p)$ and $B_\C(p)$ of $\Aut_{\C}(D_p)$ defined in Theorem~\ref{thm: BD11, autos of A1-fibred affine surfaces} are invariant under the action of $\rho\colon (x,y,z) \mapsto (\overline{x}, \overline{y}, \overline{z})$.

\begin{lemma} \label{lemma: invariant subgroups of autom-gp}
	Assume that $p\in \R[z]$. Then, the subgroups $A_\C(p)$ and $B_\C(p)$ of $\Aut_{\C}(D_p)$ given in Theorem~$\ref{thm: BD11, autos of A1-fibred affine surfaces}$ are invariant under the action of $\Gal(\C/\R)$.
\end{lemma}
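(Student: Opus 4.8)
The plan is to reduce both invariance statements to a single observation. Since $p\in\R[z]$ satisfies $\overline p=p$, the standard conjugation $\rho\colon(x,y,z)\mapsto(\overline x,\overline y,\overline z)$ preserves the hypersurface $D_p$ (indeed $\rho^*(xy-p(z))=xy-\overline p(z)=xy-p(z)$), so it restricts to a real structure on $D_p$, and the action $f\mapsto\overline f=\rho\circ f\circ\rho$ is well defined on $\Aut_\C(D_p)$. Moreover, by Notation~\ref{notcomplexconj}, for an endomorphism $f=(f_1,f_2,f_3)$ of $\A^3_\C$ preserving $D_p$, the conjugate $\overline f$ is simply $(\overline{f_1},\overline{f_2},\overline{f_3})$, obtained by conjugating all coefficients. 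So in both cases it suffices to check that coefficient-conjugation, together with $\overline p=p$, keeps us inside the prescribed subgroup.

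For $A_\C(p)$: an element $f$ is by definition the restriction of some affine $g\in\Aff_3(\C)$, and $g$ preserves $D_p$, i.e.\ $g^*(xy-p(z))=\mu(xy-p(z))$ for some $\mu\in\C^*$. I would conjugate the coefficients of this identity; since $\overline p=p$, this yields $\overline g^*(xy-p(z))=\overline\mu(xy-p(z))$, so $\overline g$ again preserves $D_p$. As $\overline g$ is visibly affine (conjugation preserves degrees), $\overline f=\overline g|_{D_p}$ lies in $A_\C(p)$. Here one only has to note that restriction commutes with conjugation by $\rho$, precisely because $\rho$ stabilises $D_p$.

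For $B_\C(p)$: I would argue directly from the explicit formula. Conjugating the coefficients of $\psi_{a,b,c,d,r}$ and using $\overline p=p$ gives exactly $\psi_{\overline a,\overline b,\overline c,\overline d,\overline r}$, where $\overline r$ denotes $r$ with conjugated coefficients; the only point to verify is that this is a legitimate element of $B_\C(p)$, namely that $\overline a\,\overline b\,p(z)=p(\overline c z+\overline d)$. This follows by conjugating the defining relation $ab\,p(z)=p(cz+d)$ and again invoking $\overline p=p$. Alternatively, one can avoid the formula entirely and use Remark~\ref{Rema:Fibration}: $B_\C(p)$ is the stabiliser of the fibration $\pi\colon(x,y,z)\mapsto x$, and since $\pi\circ\rho=\rho_1\circ\pi$ with $\rho_1$ the conjugation $x\mapsto\overline x$ on the base, conjugation by $\rho$ sends a fibration-preserving automorphism to a fibration-preserving one.

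The computations are entirely routine and there is no substantive obstacle. The only points requiring a little care are bookkeeping ones: that $\overline f=\rho\circ f\circ\rho$ really acts by conjugating the polynomial coefficients of $f$, that this is compatible with restricting from $\A^3_\C$ to $D_p$, and---the crux that makes everything work---that $\overline p=p$ because $p\in\R[z]$. Once these are in place, both $A_\C(p)$ and $B_\C(p)$ are seen to be stable under $\Gal(\C/\R)$.
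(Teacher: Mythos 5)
Your argument is correct and matches the paper's proof: both rest on the observation that $\rho$ stabilises $D_p$ because $\overline{p}=p$, so that conjugating an affine restriction yields an affine restriction (giving the invariance of $A_\C(p)$), and that $\overline{\psi_{a,b,c,d,r}}=\psi_{\overline{a},\overline{b},\overline{c},\overline{d},\overline{r}}$ still satisfies the defining relation (giving the invariance of $B_\C(p)$). Your write-up simply spells out the coefficient-conjugation bookkeeping that the paper leaves implicit.
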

\begin{proof}
	Since $p$ is real, we have $\rho(D_p)=D_p$, and thus $\overline{f}(D_p)=\rho\circ f \circ\rho(D_p)=D_p$ for all $f\in\Aut(\A^3_\C)$ satisfying $f(D_p)=D_p$. Since any element of $A_\C(p)$ comes from the restriction of an element of $\Aff_3(\C)$, this implies that $A_\C(p)$ is invariant under the action of $\Gal(\C/\R)$. Similarly, as $\overline{\psi_{a,b,c,d,r}}=\psi_{\bar{a},\bar{b},\bar{c},\bar{d},\bar{r}}$ for all $\psi_{a,b,c,d,r}\in B_\C(p)$, the group $B_\C(p)$ is also invariant under the action of $\Gal(\C/\R)$.
\end{proof}

The next result shows that it will be actually sufficient to compute the cohomology set $H^1(A_\C(p))$ to determine all real forms of $D_p$.

\begin{lemma}\label{Lemm:H1B-new}
Let $p \in \C[z]$ be a polynomial with $\deg(p)\geq 2$. The homomorphisms of pointed sets
\[\begin{array}{ccc}
H^1(A_\C(p)\cap B_\C(p))&\to &H^1(B_\C(p)),\\
 H^1(A_\C(p))&\to& H^1(\Aut_\C(D_p))\end{array}\]
given by the inclusions $A_\C(p)\cap B_\C(p)\hookrightarrow B_\C(p)$ and $A_\C(p) \hookrightarrow\Aut_\C(D_p)$ are isomorphisms of pointed sets.
\end{lemma}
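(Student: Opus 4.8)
The plan is to handle the two maps separately. Throughout I write $A=A_\C(p)$, $B=B_\C(p)$, $C=A\cap B$ and $G=\Aut_\C(D_p)$, and I use that $G=A\Asterisk_C B$ (Theorem~\ref{thm: BD11, autos of A1-fibred affine surfaces}) and that $A,B,C$ are $\Gal(\C/\R)$-invariant (Lemma~\ref{lemma: invariant subgroups of autom-gp}), so that $\Gal(\C/\R)$ acts on each of them and on $G$. Recall that a cocycle $\nu\in Z^1(G)$ is an element with $\nu\circ\overline{\nu}=1$, and that $\nu_1,\nu_2$ are cohomologous exactly when $\nu_2=\alpha^{-1}\circ\nu_1\circ\overline{\alpha}$ for some $\alpha$ in the relevant group.

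For the inclusion $C\hookrightarrow B$, the first step is to realise $B$ as a $\Gal(\C/\R)$-equivariant semidirect product. The composition law for the $\psi_{a,b,c,d,r}$ shows that forgetting the component $r$ (when $\deg p\ge 3$), respectively remembering only its constant term $r(0)$ (when $\deg p=2$), defines a group retraction $\pi\colon B\to C$ whose kernel $N$ is the additive group of the $\psi_{1,1,1,0,r}$ with $r\in\C[x]$ (resp.\ $r\in x\C[x]$); thus $B=N\rtimes C$, and since $\overline{\psi_{a,b,c,d,r}}=\psi_{\overline{a},\overline{b},\overline{c},\overline{d},\overline{r}}$ the splitting is $\Gal(\C/\R)$-equivariant. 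I would then use the standard exact sequence of pointed sets attached to a split extension: the section $C\hookrightarrow B$ makes $\pi_*\colon H^1(B)\to H^1(C)$ surjective, and by the twisting formalism the fibre of $\pi_*$ over the class of a cocycle $c$ is governed by $H^1$ of $N$ with the $\Gal(\C/\R)$-action twisted by a lift of $c$. The key point is that every such twist is the additive group of a $\C$-vector space equipped with an additive involution; such a group is uniquely $2$-divisible, so the halving trick already used in Lemma~\ref{Lem:H1CCstar} shows that all these $H^1$'s vanish. Hence every fibre of $\pi_*$ is a singleton, $\pi_*$ is bijective, and so is the inclusion-induced map $H^1(C)\to H^1(B)$.

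For the inclusion $A\hookrightarrow G$ I would work with the Bass--Serre tree $T$ of $G=A\Asterisk_C B$, whose vertices are the cosets $gA$ and $gB$ and whose edges are the cosets $gC$. Since $\Gal(\C/\R)$ fixes $A,B,C$ setwise, it acts on $T$, so $\widehat{G}=G\rtimes\Gal(\C/\R)$ acts on $T$ preserving the bipartition into $A$-type and $B$-type vertices. A cocycle $\nu\in Z^1(G)$ corresponds to the order-two element $\nu\sigma\in\widehat{G}$, where $\sigma$ generates $\Gal(\C/\R)$ (indeed $(\nu\sigma)^2=\nu\circ\overline{\nu}$), and cohomologous cocycles correspond to $G$-conjugate involutions. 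Because the bipartition is preserved no element of $\widehat{G}$ inverts an edge, so each involution $\nu\sigma$ fixes a vertex: fixing $gA$ gives $g^{-1}\circ\nu\circ\overline{g}\in A$, so $[\nu]$ lies in the image of $H^1(A)$, while fixing $gB$ gives $[\nu]\in\operatorname{im}(H^1(B)\to H^1(G))$, which by the already-proved bijectivity of $H^1(C)\to H^1(B)$ and the inclusion $C\subseteq A$ again lands in $\operatorname{im}(H^1(A)\to H^1(G))$. This yields surjectivity.

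The main obstacle is injectivity of $H^1(A)\to H^1(G)$, which I would prove by induction on the tree-distance $d$ between the base vertex $v_A=A$ and $\alpha v_A$, for a conjugator $\alpha\in G$ realising an equivalence $\nu_2=\alpha^{-1}\circ\nu_1\circ\overline{\alpha}$ of cocycles $\nu_1,\nu_2\in Z^1(A)$. If $d=0$ then $\alpha\in\operatorname{Stab}_G(v_A)=A$ and we are done. If $d\ge 2$, the involution $\nu_1\sigma$ fixes the whole geodesic $[v_A,\alpha v_A]$, whose first two steps pass through a $B$-vertex $a_\ast B$ with $a_\ast\in A$ and an $A$-vertex $a_\ast b_\ast A$ with $b_\ast\in B$. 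Conjugating $\nu_1\sigma$ successively by $a_\ast^{-1}\in A$ and $b_\ast^{-1}\in B$ produces cocycles $\nu^{(1)},\nu^{(2)}\in C$ with $[\nu^{(1)}]_A=[\nu_1]_A$, with $\nu^{(1)}$ and $\nu^{(2)}$ equivalent in $B$, and a new conjugator $\beta=b_\ast^{-1}a_\ast^{-1}\alpha$ satisfying $\nu_2=\beta^{-1}\circ\nu^{(2)}\circ\overline{\beta}$ and $d(v_A,\beta v_A)=d-2$. Here the first map does the crucial work: since $H^1(C)\to H^1(B)$ is injective and $\nu^{(1)},\nu^{(2)}\in C$ are $B$-equivalent, they are already $C$-equivalent, hence $A$-equivalent, giving $[\nu^{(1)}]_A=[\nu^{(2)}]_A$. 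The induction hypothesis applied to $\beta$ gives $[\nu^{(2)}]_A=[\nu_2]_A$, and chaining the equalities yields $[\nu_1]_A=[\nu_2]_A$. The delicate points to get right are the bookkeeping of which conjugations preserve the $A$-class versus only the $B$-class, and the verification that the preserved bipartition genuinely excludes edge inversions; once these are in place, both maps are isomorphisms of pointed sets.
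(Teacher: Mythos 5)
Your proposal is correct, and for the second map it takes a genuinely different route from the paper. For $H^1(A_\C(p)\cap B_\C(p))\to H^1(B_\C(p))$, the paper argues by direct cocycle manipulation: using the composition law for the $\psi_{a,b,c,d,r}$, it normalises an arbitrary cocycle first to $a=1$ and then, via the substitution by $\psi_{1,1,1,0,\frac{1}{2}s}$, to $r=0$, and checks that $B$-equivalence of two such normalised cocycles forces equivalence in the intersection; your semidirect-product formulation $B=N\rtimes C$, with $N$ the uniquely $2$-divisible additive group of the $\psi_{1,1,1,0,r}$ (respectively those with $r(0)=0$ when $\deg p=2$), packages the same mechanism --- vanishing of $H^1$ of a $\C$-vector space under any additive involution --- into the twisting formalism, so this half is essentially the paper's argument in structural clothing. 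For $H^1(A_\C(p))\to H^1(\Aut_\C(D_p))$, however, the paper simply cites Kambayashi's Theorem~1, which asserts that the square of $H^1$'s attached to an amalgam with Galois-stable factors is cocartesian, and then transfers bijectivity formally across the pushout; your Bass--Serre argument --- involutions $\nu\sigma$ in $G\rtimes\Gal(\C/\R)$ act without inversion on the bipartite tree, hence fix a vertex (giving surjectivity), and induction on $d(v_A,\alpha v_A)$ along the geodesic fixed by $\nu_1\sigma$ (giving injectivity) --- is a self-contained proof of exactly the portion of Kambayashi's theorem the paper uses. The key steps all check out: type preservation rules out edge inversions, the conjugating elements $a_\ast\in A$ and $b_\ast\in B$ do land the intermediate cocycles in $C=A\cap B$ precisely because $A$, $B$, $C$ are Galois-stable, and the passage from $B$-equivalence to $C$-equivalence of $\nu^{(1)},\nu^{(2)}$ is where the first bijection is consumed. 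What your approach buys is independence from the external citation (at the cost of invoking the fixed-point theory of finite groups acting on trees); what the paper's buys is brevity.
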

\begin{proof}
Recall that by definition every element $\psi$ of $B_\C(p)$ is of the form $\psi=\psi_{a,b,c,d,r}$ for some $a,b,c\in\C^*$, $d\in\C$ and $r\in\C[x]$ such that $p(cz+d)=abp(z)$. Moreover, for all $\psi_{a,b,c,d,r},\psi_{a',b',c',d',r'}$ in $B_\C(p)$, we have 
\[\psi_{a,b,c,d,r}\circ\psi_{a',b',c',d',r'}=\psi_{aa',bb',cc',cd'+d,cr'(x)+a'r(a'x)}\]
and 
\[\psi_{a,b,c,d,r}=\psi_{a',b',c',d',r'} \text{ if and only if } a=a',b=b',c=c',d=d',r=r'.\]
The latter claim can be proven using the birational morphism $D_p\to \A^2$, $(x,y,z)\mapsto (x,z)$, or by saying that if the two maps are equal, then $\psi_{a,b,c,d,r}$ and $\psi_{a',b',c',d',r'}$ have the same components modulo $xy-p(z)$.
Remark also that any element of $B_\C(p)$ of the form $\psi_{a,b,c,d,0}$ belongs to $A_\C(p)$.

By \cite[Theorem 1]{Kambayashi}, the fact that $\Aut_\C(D_p)$ is the free product of $A_\C(p)$ and $B_\C(p)$ amalgamated over their intersection as in Theorem~\ref{thm: BD11, autos of A1-fibred affine surfaces} implies that we have the following cocartesian diagram of morphisms of pointed sets.
\[
			\xymatrix@R=0.5cm@C=1.5cm{
			H^1(A_\C(p)\cap B_\C(p))\ar@{->}[r]\ar@{->}[d]&H^1(A_\C(p))\ar@{->}[d]\\
			H^1(B_\C(p)) \ar@{->}[r]&H^1(\Aut_\C(D_p))
			}
			\]	 
Therefore, it suffices to prove that $H^1(A_\C(p)\cap B_\C(p))\to H^1(B_\C(p))$ is a bijection to obtain that $H^1(A_\C(p))\to H^1(\Aut_\C(D_p))$ is a bijection.

For this, we will show that: 
\begin{enumerate}[leftmargin=*]
\item\label{BAcapB1} Each element of $Z^1(B_\C(p))$ is equivalent to an element  $\psi\in Z^1(A_\C(p)\cap B_\C(p))$ of the form $\psi=\psi_{1,b,c,d,0}$. 
\item\label{BAcapB2} Two such elements $\psi_{1,b,c,d,0}$, $\psi_{1,b',c',d',0}$ of $Z^1(A_\C(p)\cap B_\C(p))$ are equivalent in $B_\C(p)$ if and only if they are equivalent in $A_\C(p)\cap B_\C(p)$.
\end{enumerate}

Let $\tau=\psi_{a,b,c,d,r}$ be a $1$-cocycle in $Z^1(B_\C(p))$. This implies $a\overline{a}=1$, as $\tau\circ\overline{\tau}=\id_{D_p}$. Therefore, we can find $\varepsilon\in\C^*$ with $\varepsilon^{2}=a$ and define $\theta=\psi_{\varepsilon,\varepsilon^{-1},1,0,0}=(\varepsilon x,\varepsilon^{-1}y,z)\in A_\C(p)\cap B_\C(p)$. Then, 
\begin{align*}
\tilde{\tau}&=\theta^{-1}\circ\tau\circ\overline{\theta}\\
&=\psi_{\varepsilon^{-1},\varepsilon,1,0,0}\circ\psi_{a,b,c,d,r}\circ\psi_{\varepsilon^{-1},\varepsilon,1,0,0}\\
&=\psi_{\varepsilon^{-2}a,\varepsilon^2b,c,d,\varepsilon^{-1} r(\varepsilon^{-1}x)}\\
&=\psi_{1,ab,c,d,\varepsilon^{-1} r(\varepsilon^{-1}x)}
\end{align*}
is a $1$-cocycle in $Z^1(B_\C(p))$ equivalent to $\tau$.

Denote $s(x)=r(\varepsilon^{-1}x)\in\C[x]$. Computing the third component of $\tilde{\tau}\circ\overline{\tilde{\tau}}=\id_{D_p}$, we see that $c\overline{s}(x)+s(x)=0$. Define $\psi=\psi_{1,1,1,0,\frac{1}{2}s}$. Then, $\tau'=\psi^{-1}\circ\tilde{\tau}\circ\overline{\psi}$ is a $1$-cocycle in $Z^1(B_\C(p))$ equivalent to $\tau$. Moreover, one checks that 
\begin{align*}
\tau'&=\psi^{-1}\circ\tilde{\tau}\circ\overline{\psi}\\
&=\psi_{1,1,1,0,-\frac{1}{2}s(x)}\circ\psi_{1,ab,c,d,s(x)}\circ\psi_{1,1,1,0,\frac{1}{2}\overline{s}(x)}\\
&=\psi_{1,ab,c,d,\frac{1}{2}s(x)}\circ\psi_{1,1,1,0,\frac{1}{2}\overline{s}(x)}\\
&=\psi_{1,ab,c,d,c\frac{1}{2}\overline{s}(x)+\frac{1}{2}s(x)}\\
&=\psi_{1,ab,c,d,0}.
\end{align*}
This proves \ref{BAcapB1}.

Now, let $\tau=\psi_{1,b,c,d,0}$ and $\sigma=\psi_{1,b',c',d',0}$ be two elements in $Z^1(B_\C(p))$ and suppose that $\varphi^{-1}\circ\tau\circ\overline{\varphi}=\sigma$ for some $\varphi=\psi_{\alpha,\beta,\gamma,\delta,\pi}$ in $B_\C(p)$. It is then straightforward to check that $\psi^{-1}\circ\tau\circ\overline{\psi}=\sigma$, where $\psi$ is the element of $A_\C(p)\cap B_\C(p)$ defined by $\psi=\psi_{\alpha,\beta,\gamma,\delta,0}$. This proves \ref{BAcapB2}.
\end{proof}

\subsection{Cohomology set of the group $A_\C(p)$}

 We first deal with the case where $\deg(p)=2$. In view of Lemma~\ref{PGL2Z2} and Lemma~\ref{PGL2k}, we proceed in two distinct cases.

\begin{lemma}\label{Az21}
If $p=z^2-1$, then $H^1(A_\C(p))$ contains exactly four elements, namely the classes of
$(x,y,z)$, $(-x,-y,-z)$, $(y,x,-z)$, $(-y,-x,z)$.
\end{lemma}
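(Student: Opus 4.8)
The plan is to use the explicit identification from Lemma~\ref{PGL2Z2}: when $p=z^2-1$, we have $A_\C(p)\simeq \mathrm{PGL}_2(\C)\times \langle \sigma\rangle$ where $\sigma=(-x,-y,-z)$ generates a group of order $2$. The strategy is to reduce the computation of $H^1(A_\C(p))$ to the already-known cohomology sets $H^1(\mathrm{PGL}_2(\C))$ and $H^1(\langle\sigma\rangle)\simeq H^1(\mu_2)$ computed in Lemma~\ref{Lem:H1CCstar}, and then count the resulting classes.

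First I would record how $\Gal(\C/\R)$ acts on this product. The real structure $\rho\colon(x,y,z)\mapsto(\overline x,\overline y,\overline z)$ preserves $A_\C(p)$ by Lemma~\ref{lemma: invariant subgroups of autom-gp}, and one checks on the explicit formula in Lemma~\ref{PGL2Z2} that it acts by complex conjugation on the $\mathrm{PGL}_2(\C)$ factor (i.e., $A\mapsto \overline A$) while fixing $\sigma$, since $\sigma=(-x,-y,-z)$ has real coefficients. Because the action respects the direct product decomposition, there is an induced bijection of pointed sets $H^1(A_\C(p))\iso H^1(\mathrm{PGL}_2(\C))\times H^1(\langle\sigma\rangle)$: a cocycle $(A,\sigma^i)$ splits, its cocycle condition decouples into $A\cdot\overline A=\pm I$ and $\sigma^i\cdot\overline{\sigma^i}=1$ (automatic), and two cocycles are equivalent in the product exactly when both components are. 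By Lemma~\ref{Lem:H1CCstar}\ref{PGL2C} the first factor has two classes (given by $A\cdot\overline A=I$ and $A\cdot\overline A=-I$) and by Lemma~\ref{Lem:H1CCstar}\ref{Mun} with $n=2$ the second factor has two classes (the class of $\id$ and the class of $\sigma$). This yields $2\times 2=4$ classes.

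Next I would produce the four explicit representatives claimed in the statement. Under the isomorphism of Lemma~\ref{PGL2Z2}, the $\mathrm{PGL}_2(\C)$ cocycle $A\cdot\overline A=I$ is represented by the identity matrix, giving the automorphism $(x,y,z)$, while $A\cdot\overline A=-I$ is represented by a matrix such as $\left(\begin{smallmatrix}0&-1\\1&\phantom{-}0\end{smallmatrix}\right)$, whose image in the $3\times 3$ representation of Lemma~\ref{PGL2Z2} I would compute to be $(y,x,-z)$. Multiplying each of these by the two classes $\id$ and $\sigma=(-x,-y,-z)$ of the second factor gives the four automorphisms $(x,y,z)$, $(-x,-y,-z)$, $(y,x,-z)$, and $\sigma\circ(y,x,-z)=(-y,-x,z)$, matching the statement. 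I would verify directly that each is a $1$-cocycle, i.e., that composing it with its conjugate yields the identity.

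The main obstacle is justifying cleanly that $H^1$ of a direct product on which $\Gal(\C/\R)$ acts componentwise is the product of the $H^1$'s, \emph{and} pinning down the induced action on the $\mathrm{PGL}_2(\C)$ factor. The factorization of $H^1$ for a direct product with componentwise action is elementary but must be stated, not merely invoked; the more delicate point is confirming via the explicit matrix action in Lemma~\ref{PGL2Z2} that conjugation acts on $\mathrm{PGL}_2(\C)$ as ordinary complex conjugation and that the sign $\epsilon=\pm 1$ distinguishing the two $\mathrm{PGL}_2$ classes genuinely survives into distinct classes of $A_\C(p)$ rather than collapsing when combined with the $\sigma$ factor. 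I expect the bookkeeping converting abstract $\mathrm{PGL}_2$ cocycles into the surface coordinates $(x,y,z)$ to be the most calculation-heavy part.
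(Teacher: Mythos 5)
Your proposal is correct and follows essentially the same route as the paper: both use the isomorphism $A_\C(p)\simeq\mathrm{PGL}_2(\C)\times\langle\sigma\rangle$ from Lemma~\ref{PGL2Z2}, observe that the Galois action is componentwise (standard on $\mathrm{PGL}_2(\C)$, trivial on $\langle\sigma\rangle$), combine $H^1(\mathrm{PGL}_2(\C))$ and $H^1(\Z/2)$ from Lemma~\ref{Lem:H1CCstar} to get $2\times 2=4$ classes, and then push the representatives $(\mathrm{id},\mathrm{id})$, $(M,\mathrm{id})$, $(\mathrm{id},\sigma)$, $(M,\sigma)$ through the explicit $3\times 3$ action to obtain $(x,y,z)$, $(y,x,-z)$, $(-x,-y,-z)$, $(-y,-x,z)$. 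Your extra care in spelling out why $H^1$ of a direct product with componentwise action factors as a product is a reasonable addition the paper leaves implicit, but it is the same argument.
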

\begin{proof}
Lemma~\ref{PGL2Z2} provides an explicit isomorphism $ \mathrm{PGL}_2(\C)\times \langle \sigma \rangle\iso A_\C(p)$, where $\sigma$ is an involution, the action of $\Gal(\C/\R)$ on $\langle \sigma \rangle\simeq\Z/2$ is trivial and the one on $\mathrm{PGL}_2(\C)$ is the standard one. As by Lemma~\ref{Lem:H1CCstar}\ref{PGL2C}, $H^1(\mathrm{PGL}_2(\C))$ consists of two elements, which are the class of the identity and that of $M=\left(\begin{smallmatrix} 0 & -1 \\ 1 & \phantom{-}0 \end{smallmatrix}\right)$, we find that $H^1(A_\C(p))$ consists of exactly four elements, which are the classes of the images  of $(\mathrm{id},\mathrm{id}), (\mathrm{id},\sigma), (M,\mathrm{id})$ and $(M,\sigma)$ under the above isomorphism. It moreover follows from the explicit action of $ \mathrm{PGL}_2(\C)\times \langle \sigma \rangle$ on $D_p$ given in Lemma~\ref{PGL2Z2} that these four images are equal to
$(x,y,z)$, $(-x,-y,-z)$, $(y,x,-z)$ and $(-y,-x,z)$, respectively.
\end{proof}

\begin{lemma}\label{Az2}
If $p=z^2$, then $H^1(A_\C(p))$ contains exactly two elements, which are the classes of
$(x,y,z)$ and of $(y,x,-z)$. 
\end{lemma}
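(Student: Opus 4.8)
The plan is to mirror the argument of Lemma~\ref{Az21}, using the description of $A_\C(p)$ from Lemma~\ref{PGL2k} in place of the one from Lemma~\ref{PGL2Z2}. Concretely, Lemma~\ref{PGL2k} supplies an explicit isomorphism $\varphi\colon \mathrm{PGL}_2(\C)\times \C^* \iso A_\C(p)$, so the first task is to determine how the Galois action $\rho\colon (x,y,z)\mapsto(\overline{x},\overline{y},\overline{z})$ transports to the source under $\varphi$. Since every element of $A_\C(p)$ acts $\C$-linearly on $\A^3_\C$ via the matrix displayed in Lemma~\ref{PGL2k}, and since $\overline{f}=\rho\circ f\circ \rho$ amounts to conjugating the entries of that matrix, the entries being obtained from $\alpha,\beta,\gamma,\delta,\mu$ by operations (products, determinant) that commute with complex conjugation, one gets $\overline{\varphi(R,\mu)}=\varphi(\overline{R},\overline{\mu})$. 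Hence the Galois action corresponds, under $\varphi$, to the standard action on each factor of $\mathrm{PGL}_2(\C)\times \C^*$.

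Next, I would use that $H^1$ of a direct product with diagonal Galois action is the product of the $H^1$'s: a pair $(\nu_1,\nu_2)$ lies in $Z^1(\mathrm{PGL}_2(\C)\times \C^*)$ if and only if each $\nu_i$ is a $1$-cocycle, and two such cocycles are equivalent if and only if their components are. Therefore
\[H^1(A_\C(p))\cong H^1(\mathrm{PGL}_2(\C))\times H^1(\C^*).\]
By Lemma~\ref{Lem:H1CCstar}\ref{PGL2C} the first factor has exactly two elements, represented by the classes of $\mathrm{id}$ and of $M=\left(\begin{smallmatrix}0 & -1 \\ 1 & \phantom{-}0\end{smallmatrix}\right)$, while by Lemma~\ref{Lem:H1CCstar}\ref{H1CCstar} the factor $H^1(\C^*)$ is trivial. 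This yields that $H^1(A_\C(p))$ has exactly two elements, the classes of $\varphi(\mathrm{id},1)$ and $\varphi(M,1)$.

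Finally, I would read off the representatives from the explicit action formula of Lemma~\ref{PGL2k}. Taking $\mu=1$ and the identity matrix gives $(x,y,z)$. Taking $\mu=1$ and $M$ (so $\alpha=\delta=0$, $\beta=-1$, $\gamma=1$, and $\alpha\delta-\beta\gamma=1$) gives the matrix $\left(\begin{smallmatrix}0 & 1 & 0\\ 1 & 0 & 0\\ 0 & 0 & -1\end{smallmatrix}\right)$, that is, the automorphism $(y,x,-z)$, exactly as in Lemma~\ref{Az21}.

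I expect the only genuinely delicate point to be the first step, namely checking that $\varphi$ is Galois-equivariant for the standard actions on both factors; once this is in place, the computation of $H^1$ and the identification of the two representatives are immediate from the cited lemmas. Note that the present case is in fact simpler than Lemma~\ref{Az21}, since the factor $\C^*$ contributes no new cohomology, whereas there the factor $\langle\sigma\rangle\simeq \Z/2$ doubled the number of classes.
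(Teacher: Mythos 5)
Your proposal is correct and follows essentially the same route as the paper: both use the isomorphism $A_\C(p)\simeq \mathrm{PGL}_2(\C)\times\C^*$ from Lemma~\ref{PGL2k}, combine $H^1(\mathrm{PGL}_2(\C))=\{[\mathrm{id}],[M]\}$ with $H^1(\C^*)=\{1\}$ from Lemma~\ref{Lem:H1CCstar}, and read off the representative $(y,x,-z)$ from the explicit action formula. The extra details you supply (the Galois-equivariance of the isomorphism and the componentwise description of cocycles in a product) are exactly the points the paper leaves implicit, and your matrix computation for $(M,1)$ checks out.
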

\begin{proof}
Lemma~\ref{PGL2k} provides an explicit isomorphism $ \mathrm{PGL}_2(\C)\times \C^*\iso A_\C(p)$. As $H^1(\mathrm{PGL}_2(\C))$ consists of two elements, which are the class of the identity and that of $M=\left(\begin{smallmatrix} 0 & -1 \\ 1 & \phantom{-}0 \end{smallmatrix}\right)$ --- see Lemma~\ref{Lem:H1CCstar}\ref{PGL2C}) --- and as $H^1(\C^*)=\{1\}$, the pointed set $H^1(A_\C(p))$ contains exactly two elements, which are the classes of the identity and that of the image of $(M,1)$ under the above isomorphism. This latter is equal to the class of $(y,x,-z)$, compare with Lemma~\ref{PGL2k}.
\end{proof}

To describe $H^1(A_\C(p))$ when $\deg(p)\ge 3$, we will need the group $H_p\in \Aut(\A^1_\C)$ associated to $p$. It corresponds to the group of symmetries of the polynomial. 

\begin{definition}\label{Defi:Hp}
Let $p\in \C[z]$ be a polynomial. We denote by $H_p\subseteq \Aut(\A^1_\C)=\Aut_{\C}(\Spec(\C[z]))$ the subgroup \[H_p=\{(cz+d) \mid c\in \C^*, d\in \C, \exists \lambda\in\C^*: p(cz+d)=\lambda p(z)\}.\]
\end{definition}

\begin{lemma}\label{lemma:Hp}\label{lemma: H^1(H_p)}
Let $p\in\C[z]$ be in reduced form. 
\begin{enumerate}[leftmargin=*]
\item\label{H1Hp1}
If $p$ has a unique root, then $p=z^d$ is a monomial and $H_p=\{(cz)\mid c\in \C^*\}$. In particular, $H_p$ is then isomorphic to $\C^*$ and $H^1(H_p)$ contains only one element, namely the class of $(z)$.
\item\label{H1Hp2}
If $p$ has at least two roots, then $H_p=\{(cz)\mid c\in \C^*, c^n=1\}$ is cyclic of finite order $n\geq1$. In particular, $H^1(H_p)$ contains either a single element when $n$ is odd or two elements when $n$ is even, namely the classes of $(z)$ and $(cz)$ where $c$ denotes any primitive $n$-th root of unity. Moreover, $p$ is of the form $p(z)=z^mq(z^n)$ for some integer $m\geq0$ and some polynomial $q\in\C[t]$ with $q(0)\neq0$. 
\end{enumerate}
\end{lemma}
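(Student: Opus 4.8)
The plan is to analyze the group $H_p$ directly from its definition as the set of affine transformations $(cz+d)$ that send $p$ to a scalar multiple of itself. Since $p$ is in reduced form, it is monic of degree $d$ with no term of degree $d-1$, and the key constraint is that any symmetry $z\mapsto cz+d$ must permute the roots of $p$ (up to scaling), so it preserves the multiset of roots in a strong sense.

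\medskip

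For part \ref{H1Hp1}, suppose $p$ has a unique root. Since $p$ is in reduced form, the sum of the roots of $p$ is zero (the coefficient of $z^{d-1}$ vanishes); but with a unique root $r$ of multiplicity $d$, this sum is $dr$, forcing $r=0$ and hence $p=z^d$. Then $p(cz+d')=\lambda p(z)$ reads $(cz+d')^d=\lambda z^d$, which holds if and only if $d'=0$ and $\lambda=c^d$; thus $H_p=\{(cz)\mid c\in\C^*\}\cong\C^*$. The triviality of $H^1(H_p)$ is then immediate from Lemma~\ref{Lem:H1CCstar}\ref{H1CCstar}, identifying $H_p$ with $(\C^*,\cdot)$ via $c\mapsto(cz)$ (one must check this identification is $\Gal(\C/\R)$-equivariant, which it is since $\overline{(cz)}=(\overline{c}z)$).

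\medskip

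For part \ref{H1Hp2}, I would first show that $d'=0$ for every element $(cz+d')\in H_p$: again the reduced-form condition forces the sum of roots to be zero, and since $z\mapsto cz+d'$ maps the root-multiset to itself up to the common scaling by $c$, comparing sums of roots gives that the translation part must vanish. So every element of $H_p$ is of the form $(cz)$, and $p(cz)=\lambda p(z)=c^d p(z)$ (comparing leading coefficients gives $\lambda=c^d$). Writing $p(z)=\sum a_i z^i$, the condition $p(cz)=c^d p(z)$ says $a_i(c^i-c^d)=0$ for all $i$, i.e. $c^i=c^d$ whenever $a_i\neq0$. Since $p$ has at least two roots it is not a monomial, so there is some nonzero coefficient $a_i$ with $i<d$; letting $n$ be the gcd of all differences $d-i$ with $a_i\neq0$, the admissible $c$ are exactly the $n$-th roots of unity, so $H_p=\{(cz)\mid c^n=1\}$ is cyclic of order $n$. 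The factorization $p(z)=z^m q(z^n)$ then follows because $a_i\neq0$ forces $n\mid(d-i)$, so all exponents appearing in $p$ are congruent to $d$ modulo $n$; setting $m$ to be the smallest such exponent and collecting gives $p(z)=z^m q(z^n)$ with $q(0)\neq0$. Finally, the identification $c\mapsto(cz)$ is a $\Gal(\C/\R)$-equivariant isomorphism $\mu_n\iso H_p$, so $H^1(H_p)=H^1(\mu_n)$ is computed by Lemma~\ref{Lem:H1CCstar}\ref{Mun}, giving the stated count and representatives.

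\medskip

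The main obstacle I anticipate is the clean bookkeeping in the ``no translation'' step and the extraction of the maximal $n$: one must be careful that $n$ defined via the gcd of exponent-gaps is genuinely the order of the cyclic group and simultaneously the integer making $p(z)=z^m q(z^n)$ with $q(0)\neq0$ and $q$ not itself a function of a higher power of $z$. The rest is a routine translation through Lemma~\ref{Lem:H1CCstar}, so the real content is purely the structural analysis of $H_p$ from the reduced-form normalization.
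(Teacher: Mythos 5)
Your proposal is correct and follows essentially the same route as the paper: use the vanishing of the degree-$(d-1)$ coefficient (equivalently, Vieta's sum of roots) to kill the translation part, compare coefficients in $p(cz)=c^{\deg p}p(z)$ to identify $H_p$ with $\C^*$ or with $\mu_n$ where $n$ is the gcd of the exponent gaps, deduce the factorization $p(z)=z^mq(z^n)$, and then quote Lemma~\ref{Lem:H1CCstar}. Your gcd formulation of $n$ and the explicit check of $\Gal(\C/\R)$-equivariance are slightly more detailed than the paper's write-up but substantively identical.
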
 
\begin{proof}
\ref{H1Hp1} Recall that $H^1(\C^*)$ is trivial by Lemma~\ref{Lem:H1CCstar}.

\ref{H1Hp2} Let $p(z)=\sum_{i=0}^{\ell}p_iz^i\in\C[z]$ with $p_{\ell}=1$ and $p_{\ell-1}=0$ and suppose that 
$p$ is not a monomial. Suppose that $c,\lambda\in\C^*$ and $d\in\C$ are such that $p(cz+d)=\lambda p(z)$. Then, $d=0$ because $p_{\ell-1}=0$. Moreover, for any $i$, $j$ with $p_i,p_j\neq0$, we find $c^i=\lambda=c^j$. This implies that $c$ is of finite order, say $n\geq1$, and that $i\equiv j\pmod{n}$. Hence, $p$ is of the form $p(z)=z^mq(z^n)$ as claimed in the statement. In turn, $H_p=\{(cz)\mid c\in\C^*, c^n=1\}$ is cyclic of order $n$. Finally, the claims about $H^1(H_p)$ follow from Lemma~\ref{Lem:H1CCstar}.
\end{proof}

\begin{lemma}\label{Lemm:H1A}
	Let $p\in \R[z]$ be a polynomial of degree at least $3$ in reduced form. Then, the following holds:
\begin{enumerate}[leftmargin=*]
\item\label{H1A2}
If $H_p$ is infinite and $\deg(p)$ is odd, then $H^1(A_\C(p))$ contains exactly two elements, namely the classes of
\[(x,y,z),(y,x,z).\]
\item\label{H1A3}
If $H_p$ is infinite and $\deg(p)$ is even, or $H_p$ is finite of odd order, then $H^1(A_\C(p))$ contains exactly three elements, namely the classes of
\[(x,y,z),(y,x,z), (-y,-x,z)\]
\item\label{H1A4}
If $H_p$ is of even order $n\ge 2$ and $\deg(p)$ is odd, then $H^1(A_\C(p))$ contains exactly four elements, namely the classes of
\[(x,y,z),(ax,ay,cz),(y,x,z),( ay,a x,cz),\]
for any $c\in \C^*$ of order $n$, and any $a\in \C^*$ such that $a^2=c^{\deg(p)}$.
\item\label{H1A6}
If $H_p$ is of even order $n\ge 2$ and $\deg(p)$ is even, then $H^1(A_\C(p))$ contains exactly six elements, namely the classes of
\[(x,y,z),(ax,ay,cz),(y,x,z),(-y,-x,z),( ay,a x,cz),( -ay,-a x,cz),\]
for any $c\in \C^*$ of order $n$, and any $a\in \C^*$ such that $a^2=c^{\deg(p)}$.
\end{enumerate}
\end{lemma}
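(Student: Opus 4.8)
The structure I would exploit is the explicit description of $A_\C(p)$ coming from Lemma~\ref{AcapBdeg3}, namely $A_\C(p)=(A_\C(p)\cap B_\C(p))\rtimes \langle (y,x,z)\rangle$, where the normal subgroup consists of the diagonal maps $(ax,by,cz+d)$ with $abp(z)=p(cz+d)$. Since $p$ is in reduced form of degree $\ge 3$ and is not a monomial when $H_p$ is finite, the analysis of Lemma~\ref{lemma:Hp} applies: in every such diagonal element one has $d=0$ and $c\in H_p$ (so $c$ is free in $\C^*$ when $H_p$ is infinite, and an $n$-th root of unity when $H_p$ has order $n$), together with the constraint $ab=c^{\deg(p)}$ forced by comparing leading coefficients. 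The first step is therefore to rewrite $A_\C(p)\cap B_\C(p)$ as the group of maps $(ax,by,cz)$ with $a,b\in\C^*$, $c\in H_p$, $ab=c^{d}$ (writing $d=\deg(p)$), and to record that the $\Gal(\C/\R)$-action conjugates by $\rho$, sending $(ax,by,cz)$ to $(\overline a x,\overline b y,\overline c z)$ and fixing $(y,x,z)$, which is legitimate because $p\in\R[z]$ (Lemma~\ref{lemma: invariant subgroups of autom-gp}).

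\textbf{Reducing the normal factor.}
Next I would understand the cohomology of the normal subgroup $N\coloneqq A_\C(p)\cap B_\C(p)$ on its own. The projection $(ax,by,cz)\mapsto c\in H_p$ is a $\Gal$-equivariant surjection onto $H_p$, whose kernel is the subgroup $K=\{(ax,a^{-1}y,z)\mid a\in\C^*\}\cong\C^*$ (here $c=1$ forces $ab=1$). Because $K\cong(\C^*,\cdot)$ has trivial $H^1$ by Lemma~\ref{Lem:H1CCstar}\ref{H1CCstar}, the inclusion $H_p\hookrightarrow N$ (as the section $c\mapsto (a_0 x,a_0 y,cz)$ with $a_0^2=c^{d}$, available once we fix square roots) should induce a bijection $H^1(H_p)\iso H^1(N)$; I would prove this by a direct cocycle computation entirely parallel to the proof of Lemma~\ref{Lemm:H1B-new}, i.e.\ show every cocycle of $N$ is equivalent to one landing in the chosen copy of $H_p$ and that two such are $N$-equivalent iff they are $H_p$-equivalent. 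Lemma~\ref{lemma:Hp} then tells me $H^1(N)$ has one element if $H_p$ is infinite or of odd order, and two elements (represented by $c=1$ and $c$ a primitive $n$-th root of unity) if $H_p$ has even order $n$.

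\textbf{Incorporating the involution and counting.}
The remaining and most delicate step is to pass from $H^1(N)$ to $H^1(A_\C(p))$ across the semidirect product extension $1\to N\to A_\C(p)\to \langle (y,x,z)\rangle\to 1$. I would use the exact sequence of pointed sets associated to this extension, together with the action of the involution $(y,x,z)$ by conjugation on $N$ (it swaps the two factors, sending $(ax,by,cz)$ to $(b x, a y, cz)$, hence fixes $H_p$ and acts on $K\cong\C^*$ by inversion). The new cocycle classes are precisely those lying over the nontrivial element of $\langle\sigma\rangle$, i.e.\ cocycles of the form $(\text{diagonal})\circ(y,x,z)$; for these I must solve $\nu\circ\overline\nu=1$ and sort the solutions into conjugacy classes. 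This is where the parity of $\deg(p)$ enters: the condition $ab=c^{d}$ combined with the twisted cocycle equation behaves differently according as $c^d$ admits a compatible square root fixed by the twisted action, which is exactly what distinguishes the two/three/four/six counts. Concretely I would exhibit the listed representatives $(x,y,z),(y,x,z)$ and, when present, $(-y,-x,z)$ and the twisted classes $(ax,ay,cz),(ay,ax,cz)$ (and their signed variants), verify each is a cocycle, and prove no two are equivalent by computing the invariants that survive conjugation (the image $c\in H^1(H_p)$ and, for the $\sigma$-part, a sign invariant analogous to the $\epsilon=\pm1$ of Lemma~\ref{Lem:H1CCstar}\ref{PGL2C}). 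The main obstacle is the bookkeeping in this last step: correctly identifying which $\sigma$-twisted cocycles exist and are inequivalent, and checking that the four stated cases exhaust all combinations of ($H_p$ infinite/finite, $n$ odd/even) against ($\deg(p)$ odd/even). I expect the counting to mirror the $\deg(p)=2$ computations of Lemmas~\ref{Az21} and~\ref{Az2}, with $H^1(H_p)$ playing the role that $H^1(\mu_n)$ or $H^1(\C^*)$ played there.
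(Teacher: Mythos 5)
Your overall route coincides with the paper's: both rest on the decomposition $A_\C(p)=(A_\C(p)\cap B_\C(p))\rtimes\langle(y,x,z)\rangle$ from Lemma~\ref{AcapBdeg3}, project onto $H_p\times\langle(y,x,z)\rangle$, and sort cocycles by their image there. Your intermediate step computing $H^1(A_\C(p)\cap B_\C(p))\simeq H^1(H_p)$ via the kernel $\{(ax,a^{-1}y,z)\}\simeq\C^*$ is sound (the group is abelian, so all twists of the kernel are trivial and Lemma~\ref{Lem:H1CCstar} applies), and it repackages cleanly what the paper does by hand with the substitutions $\theta=(\lambda x,\lambda^{-1}y,z)$.

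The gap is in your final step, and it is not bookkeeping: it is the single computation from which the entire $2/3/4/6$ dichotomy arises. After normalisation, the cocycles lying over the nontrivial element of $\langle(y,x,z)\rangle$ are $(ay,ax,\mu z)$ with $\lvert a\rvert=1$, $a^2=\mu^{\deg(p)}$ and $\mu\in\{1,c\}$, and the whole lemma reduces to deciding exactly when $(ay,ax,\mu z)$ and $(-ay,-ax,\mu z)$ are equivalent. The paper proves this happens if and only if $\deg(p)$ is odd and $H_p$ is infinite or of even order. The ``if'' direction is a one-line conjugation by $(x,-y,-z)$, but the ``only if'' direction requires ruling out \emph{every} conjugator $\theta\in A_\C(p)$: writing $\theta=(\alpha x,\beta y,\gamma z)$ (or $(y,x,z)$ times such), the sign change forces $\beta=-\overline{\alpha}$, hence $\alpha\beta=-\lvert\alpha\rvert^2<0$, while the constraint $\alpha\beta=\gamma^{\deg(p)}$ with $\gamma\in\R$ and $(\gamma z)\in H_p$ then forces $\deg(p)$ odd and $-1$-type elements in $H_p$ --- impossible when $\deg(p)$ is even or $H_p$ has odd order. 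You gesture at ``a sign invariant analogous to the $\epsilon=\pm1$ of Lemma~\ref{Lem:H1CCstar}'' and say you ``expect the counting to mirror'' the degree-$2$ lemmas, but no such invariant is constructed, and the analogy with Lemmas~\ref{Az21} and~\ref{Az2} is misleading: there the count is governed by $H^1(\PGL_2(\C))$, whereas here it is governed by which negative reals lie in $H_p$ together with the parity of $\deg(p)$ --- precisely the interaction your plan leaves unresolved. Until that equivalence criterion is proved in both directions, the proposal does not distinguish cases \ref{H1A2} from \ref{H1A3}, nor \ref{H1A4} from \ref{H1A6}.
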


\begin{proof}As $p$ is in reduced form, every element of $H_p$ is of the form $(cz)$ for some $c\in \C^*$ thanks to Lemma~\ref{lemma:Hp}.
Since $\deg(p)\ge 3$, we have by Lemma~\ref{AcapBdeg3} $A_\C(p)=(A_\C(p) \cap B_\C(p)) \rtimes \langle (y,x,z) \rangle$, with $A_\C(p) \cap B_\C(p)=\{  (ax,by,cz) \mid a,b,c\in \C^*, abp(z)=p(cz)\}$. Thus, we can define a surjective group homomorphism $\varphi\colon A_\C(p)\twoheadrightarrow   H_p\times  \langle (y,x,z) \rangle$ by sending $ (ax,by,cz)$ onto $(cz,\mathrm{id})$ and $(y,x,z) $ onto $(\mathrm{id},(y,x,z))$.

There are two cases to distinguish, both following from Lemma~\ref{lemma: H^1(H_p)}:
\begin{enumerate}[label=(\roman*), leftmargin=*]
\item\label{iHptrivial}
 If $H_p$ is infinite or finite of odd order, then $H^1(H_p)=\{1\}$.
\item\label{iiHpnontrivial}
 If $H_p$ is finite of even order $n\ge 2$, then $H^1(H_p)$ contains exactly two classes, namely the class of the identity and a second class that contains $(cz)$ for each $c\in \C^*$ of order $n$. 
\end{enumerate}
In case~\ref{iiHpnontrivial}, we fix $c\in \C^*$ of order $n$.

For each $1$-cocycle $\tau\in Z^1(A_\C(p))$, we may assume that $\sigma=\varphi(\tau)$ belongs to $\{\mathrm{id}\}\times  \langle (y,x,z) \rangle$ in Case~\ref{iHptrivial} and to $\{\mathrm{id}, (cz)\}\times  \langle (y,x,z) \rangle$ in Case~\ref{iiHpnontrivial}. This gives two or four possibilities for $\sigma$, respectively. Moreover, two $1$-cocycles that get mapped to different elements in $H_p \times \langle (y,x,z) \rangle$ cannot be equivalent. So, we may study the different possibilities for $\sigma$ separately.

We consider first the case where $\tau\in Z^1(A_\C(p))$ with $\sigma=\varphi(\tau)=(\mathrm{id},\mathrm{id})$. Then, $\tau=(ax,\frac{1}{a}y,z)$ for some $a\in \C^*$ with $a\overline{a}=1$. Choosing $\lambda\in \C$ with $\lambda^2= a$ and defining $\theta=(\lambda x,\frac{1}{\lambda} y,z)\in A_\C(p)$, we obtain $\theta^{-1}\circ\tau\circ\overline{\theta}=(\frac{a}{\lambda^2}x,\frac{\lambda^2}{a}y,z)=(x,y,z)$, since $\lambda\overline{\lambda}=1$.

Now, consider the case where $\tau\in Z^1(A_\C(p))$ with $\sigma=((cz),\mathrm{id})$. Then, $\tau=(ax,by,cz)$ for some $a,b\in \C^*$ with $a \overline{a}=b\overline{b}=1$ and $abp(z)=p(cz)=c^{\deg(p)}p(z)$. Let $\lambda\in \C$ with $\lambda\overline{\lambda}=1$ and define $\theta=(\lambda x,\frac{1}{\lambda} y,z)$. Then, $\theta^{-1}\circ\tau\circ\overline{\theta}=(\frac{a}{\lambda^2}x,\lambda^2by,cz)$. Choosing $\lambda$ with $\lambda^4=\frac{a}{b}$, we may thus assume that $b=a$, i.e., that $\tau=(ax,ay,cz)$ with $a^2=c^{\deg(p)}$. Repeating the same argument with $\lambda=\I$, we see that the two $1$-cocycles $(ax,ay,cz)$ and $(-ax,-ay,cz)$ are equivalent. Hence, there is only one class of $1$-cocycles associated to $\sigma=((cz),\mathrm{id})$.
 
Finally, we consider the case where $\tau\in Z^1(A_\C(p))$ with $\sigma=(\mathrm{id},(y,x,z))$ or $\sigma=((cz),(y,x,z))$. Then, $\tau=(ay,\frac{1}{\overline{a}}x,\mu z)$ for some $a\in \C^*$ satisfying $a\cdot \frac{1}{\overline{a}}=\mu^{\deg(p)}$, where $\mu=1$ or $\mu=c$. Choosing $\lambda\in \R_{>0}$ with $\lambda^2=\lvert a\rvert$ and defining $\theta=(\lambda x,\frac{1}{\lambda} y,z)$, we obtain $\theta^{-1}\circ\tau\circ\overline{\theta}=(\frac{a}{\lambda^2}y,\frac{\lambda^2}{\overline{a}}x,\mu z)$. So, we may assume that $\lvert a\rvert=1$, hence that $\tau=(ay,ax,\mu z)$. 

As $p(\mu z)=a^2p(z)$, we get $\mu^{\deg(p)}=a^2$. In particular, $a=\pm 1$ if $\mu=1$. To conclude the proof, it only remains to prove that the $1$-cocycles $(ay,ax,\mu z)$ and $(-ay,-ax,\mu z)$ are equivalent if and only if the following holds:
\begin{equation}\label{Condition}\tag{$\clubsuit$} \deg(p)\text{ is odd and } H_p\text{ is either infinite or  finite of even order.}\end{equation}

Suppose first that \eqref{Condition} holds. In this case, $(-z)\in H_p$, and $p(-z)=(-1)^{\deg(p)}p(z)=- p(z)$. Hence, $\theta=(x,-y,-z)\in A_\C(p)\cap B_\C(p)$, and $\theta^{-1}\circ (ay,ax,\mu z) \circ \overline{\theta}=(-ay,-ax,\mu z)$.

Suppose now that \eqref{Condition} does not hold, and suppose, by contradiction, that $\theta^{-1}\circ (ay,ax,\mu z) \circ \overline{\theta}=(-ay,-ax,\mu z)$ for some $\theta\in A_\C(p)$.

If $\theta\in A_\C(p)\cap B_\C(p)$, then $\theta=(\alpha x,\beta y, \gamma z)$ for some $\alpha,\beta,\gamma\in \C^*$ such that $\alpha\beta p(z)=p(\gamma z)$. This implies that $\theta^{-1}\circ(ay,ax,\mu z)\circ \overline{\theta}=(\frac{\overline{\beta}}{\alpha}ay,\frac{\overline{\alpha}}{\beta}ax,\frac{\overline{\gamma}}{\gamma}\mu z)$. Hence, $\beta=- \overline{\alpha}$ and $\gamma\in \R$. In particular, we have that $\alpha\beta=-\alpha\overline{\alpha}\in \R_{<0}$. Since $\alpha\beta p(z)=p(\gamma z)=\gamma^{\deg(p)}p(z)$, we also have $\alpha\beta=\gamma^{\deg(p)}$. This implies that $\deg(p)$ is odd and $\gamma<0$. As we assumed that \eqref{Condition} does not hold, $H_p$ is finite of odd order. But then, $(\gamma z)\notin H_p$. Contradiction.

If $\theta\in A_\C(p)\setminus B_\C(p)$, then write $\theta=(y,x,z)\circ \theta'$ with $\theta'\in A_\C(p)\cap B_\C(p)$. Since $(y,x,z)$ commutes with $\tau=(ay,ax,\mu z)$, the equality $\theta^{-1}\circ\tau\circ \overline{\theta}=\theta'^{-1}\circ\tau\circ\overline{\theta'}$ holds and we get a contradiction as above.
\end{proof}

\subsection{Real forms}
\begin{proposition}\label{Prop:RealformsDanielewski2a} Let $p=z^2-1$. The complex  surface \[D_{p}=\Spec(\C[x,y,z]/(xy-z^2+1))\] has exactly four nonisomorphic classes of real forms, which are those of the four real surfaces
\begin{align*}S_1&=\Spec(\R[x,y,z]/(x^2+y^2+z^2+1))\\
S_2&=\Spec(\R[x,y,z]/(x^2+y^2+z^2-1))\\
S_3&=\Spec(\R[x,y,z]/(x^2-y^2+z^2-1))\\
S_4&=\Spec(\R[x,y,z]/(x^2-y^2+z^2+1))
\end{align*}
All four are pairwise not homeomorphic: their real parts are diffeomorphic to
\[S_1(\mathbb{R})=\varnothing, \, S_2(\mathbb{R})\simeq\mathbb{S}^2, \, S_3(\mathbb{R})\simeq \R^2 \setminus \{(0,0)\}, \, S_4(\mathbb{R})\simeq \mathbb{R}^2 \amalg \mathbb{R}^2.\]
\end{proposition}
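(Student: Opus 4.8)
The plan is to use the machinery established earlier in the excerpt, which reduces the computation of real forms of $D_p$ to the cohomology set $H^1(A_\C(p))$. Concretely, by the equivalence of categories between real forms and real structures, real forms of $D_p$ correspond bijectively to elements of $H^1(\Aut_\C(D_p))$, and by Lemma~\ref{Lemm:H1B-new} the inclusion $A_\C(p)\hookrightarrow \Aut_\C(D_p)$ induces a bijection $H^1(A_\C(p))\iso H^1(\Aut_\C(D_p))$. For $p=z^2-1$ the set $H^1(A_\C(p))$ has already been computed in Lemma~\ref{Az21}: it consists of exactly four classes, represented by the cocycles $(x,y,z)$, $(-x,-y,-z)$, $(y,x,-z)$, $(-y,-x,z)$. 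So the count of four is immediate, and the real task is to identify the real form attached to each cocycle and then to distinguish the four forms.

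Next I would translate each $1$-cocycle $\nu\in Z^1(A_\C(p))$ into an explicit real form. Fixing the standard real structure $\rho\colon(x,y,z)\mapsto(\overline x,\overline y,\overline z)$ on $D_{p}$, the real structure associated to $\nu$ is $\nu\circ\rho$, and the corresponding real form is the quotient $D_p/\langle \nu\circ\rho\rangle$, whose coordinate ring is the ring of invariants $\C[x,y,z]/(xy-z^2+1)$ under $\nu\circ\rho$. For each of the four cocycles I would compute these invariants and exhibit generators. For instance, for $\nu=(x,y,z)$ the structure is just $\rho$ and the invariant ring is the real locus $\R[x,y,z]/(xy-z^2+1)$; a linear change of variables $x\mapsto x+\I y$, $y\mapsto x-\I y$ (so that $xy\mapsto x^2+y^2$) recasts this as $\R[x,y,z]/(x^2+y^2+z^2-1)$, giving $S_2$. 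For the twisted structures coming from $(-x,-y,-z)$, $(y,x,-z)$, $(-y,-x,z)$ one carries out the analogous linear change of coordinates diagonalising the relevant real quadratic form; keeping careful track of the signs produced by the cocycle yields the four defining equations $x^2\pm y^2+z^2\pm 1$, i.e. the surfaces $S_1,S_2,S_3,S_4$. This is the bookkeeping heart of the proof, and the main thing to get right is matching signs so that the four cocycles land on the four listed equations.

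Finally, since the bijection with $H^1$ already guarantees these four forms are pairwise nonisomorphic as real varieties, it suffices to record the topology of their real loci to confirm they are even pairwise non-homeomorphic and to make the statement concrete. Here I would simply analyse the real solution sets of $x^2+y^2+z^2+1=0$ (empty, giving $S_1(\R)=\varnothing$), of $x^2+y^2+z^2-1=0$ (the sphere $\mathbb S^2$, giving $S_2$), and of the two indefinite quadrics $x^2-y^2+z^2\mp 1=0$. For the latter, setting $w=y$ one sees a one-sheeted hyperboloid-type surface: $x^2+z^2-1=y^2$ (for $S_3$) deformation-retracts onto the circle $x^2+z^2=1$ in the $y=0$ plane, hence is homotopy equivalent to $\R^2\setminus\{(0,0)\}$, while $x^2+z^2+1=y^2$ (for $S_4$) has two components according to the sign of $y$, each diffeomorphic to $\R^2$, giving $\R^2\amalg\R^2$. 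These four spaces have distinct homotopy types, so the forms are pairwise non-homeomorphic, which reconfirms independence of the $H^1$ classes.

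The step I expect to be the main obstacle is the explicit identification of the twisted real structures: one must correctly compute the fixed ring (or an equivalent defining equation over $\R$) for each nonstandard cocycle, and in particular verify that the anti-regular involution $\nu\circ\rho$ really is conjugate, via a $\C$-linear change of variables, to complex conjugation on one of the four model quadrics. This is where sign errors creep in, and where one must be sure that the four cocycles of Lemma~\ref{Az21} are sent to four genuinely distinct equations rather than accidentally collapsing two of them; the homeomorphism-type computation at the end serves as an independent check that no such collapse has occurred.
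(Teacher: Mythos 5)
Your overall strategy is exactly the paper's: count the classes via Lemma~\ref{Lemm:H1B-new} and Lemma~\ref{Az21}, then for each cocycle $\tau$ find a linear $\theta\in\GL_3(\C)$ with $\tau\circ\rho=\theta\circ\rho\circ\theta^{-1}$ and read off the real equation of $\theta^{-1}(D_p)$, and finally distinguish the four surfaces by their real loci. However, the one instance of the ``bookkeeping heart'' that you actually carry out is wrong, and the mistake is precisely the conceptual one you flag at the end. You claim that the trivial cocycle $(x,y,z)$ yields $S_2=\{x^2+y^2+z^2-1=0\}$, using the substitution $x\mapsto x+\I y$, $y\mapsto x-\I y$. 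That substitution is not defined over $\R$, so it does not preserve the real form: it conjugates the standard conjugation $\rho$ to the twisted structure $(y,x,z)\circ\rho$ (indeed $\theta\circ\overline{\theta}^{-1}=(y,x,z)$ for $\theta=(x+\I y,x-\I y,z)$), and the surface it produces, $x^2+y^2-z^2+1=0$, is the two-sheeted hyperboloid $S_4$, not the sphere. The real form attached to the trivial cocycle is simply the real locus of $xy-z^2+1=0$; the \emph{real} change of variables $x\mapsto x+y$, $y\mapsto x-y$ turns this into $x^2-y^2-z^2+1=0$, i.e.\ the one-sheeted hyperboloid $S_3\simeq\R^2\setminus\{(0,0)\}$. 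In the paper's table the sphere $S_2$ arises instead from the cocycle $(-y,-x,z)$ via $\theta=(x+\I y,-x+\I y,z)$.

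The lesson is that you may only apply $\theta^*$ to the equation after verifying the intertwining relation $\tau\circ\rho=\theta\circ\rho\circ\theta^{-1}$ for the specific cocycle $\tau$ you are treating; an arbitrary $\C$-linear change of variables that happens to produce a real polynomial changes the cocycle along the way. Since your proposal defers the remaining three cases to analogous bookkeeping, and the one case you did work out uses the invalid shortcut, the matching of cocycles to the surfaces $S_1,\dots,S_4$ is not established. The rest of the argument (the count of four via $H^1(A_\C(p))$, and the topological identification of the four real loci, which correctly serves as a cross-check against accidental collapse) is sound and agrees with the paper.
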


\begin{proof}
By Lemma~\ref{Lemm:H1B-new} and Lemma~\ref{Az21}, $H^1(\Aut_\C(D_{p}))$ contains exactly four elements, namely the classes of $\tau_3=(x,y,z)$, $\tau_4=(-x,-y,-z)$, $\tau_1=(y,x,-z)$, $\tau_2=(-y,-x,z)$. Therefore, there are exactly four nonisomorphic real forms of $D_p$.

To see that they correspond to the real surfaces $S_1,\ldots,S_4$, we produce, for every $i=1,2,3,4$, an element $\theta_i\in \mathrm{GL}_3(\C)\subset \Aut(\A^3_\C)$ such that $\tau_i \circ \rho =\theta_i \circ \rho\circ\theta_i^{-1} $, where $\rho$ is the standard real form $(x,y,z)\mapsto (\overline{x},\overline{y},\overline{z})$ on $\A^3_\C$, and such that $\theta_i^{-1}(D_p)$ is the complexification of $S_i$, i.e.,~is $S_i\times _{\mathrm{Spec}(\mathbb{R})}\mathrm{Spec}(\mathbb{C})$. 
\[\begin{array}{|c|c|c|c|}
\hline
i &\tau_i&\theta_i \text{ with } \tau_i \circ \rho \circ \theta_i =\theta_i \circ \rho & \theta_i^*(xy-z^2+1)\\
\hline
1 & (y,x,-z) & (x+\mathbf{i}y,x-\I y,\I z)  & x^2+y^2+z^2+1 \\

2 &(-y,-x,z) & (x+\mathbf{i}y,-x+\mathbf{i}y,z) &     -(x^2+y^2+z^2-1) \\

3&(x,y,z)& (x+y,y-x,z) &  -(x^2-y^2+z^2-1) \\

4& (-x,-y,-z)& (\I(-x+y),\I(x+y),\I z)   &  x^2-y^2+z^2+1\\ \hline \end{array}\]

From the equations of $S_1$ and $S_2$, we see that $S_1(\R)=\varnothing$ and $S_2(\R)=\mathbb{S}^2$. The map $(x,y,z) \mapsto (y, \tfrac{x}{\sqrt{x^2+z^2}}, \tfrac{z}{\sqrt{x^2+z^2}})$ provides an explicit diffeomorphism from $S_3(\R)$ to the cylinder $\R \times \mathbb{S}^1$, which is diffeomorphic to the punctured plane $\R^2 \setminus \{(0,0)\}$. For $S_4(\R)$, note that $x^2+z^2=y^2-1$ implies that $y\neq 0$. Then, $S_4(\R)=\{(x,y,z) \mid y>0 \} \amalg \{(x,y,z) \mid y<0 \}$ is diffeomorphic to the disjoint union of two copies of $\R^2$. 
\end{proof}

\begin{proposition} \label{Prop:RealformsDanielewski2b}
Let $p=z^2$. The complex  surface \[D_{p}=\Spec(\C[x,y,z]/(xy-z^2))\] has exactly two nonisomorphic classes of real forms, which are those of the two real surfaces
\begin{align*}
T_1&=\Spec(\R[x,y,z]/(x^2+y^2+z^2)),\\
T_2&=\Spec(\R[x,y,z]/(x^2-y^2-z^2)).
\end{align*}
Both are pairwise not homeomorphic: $T_1(\mathbb{R})$ consists of only one point, while $T_2(\mathbb{R})$ is infinite; it is a cone over $\mathbb{S}^1$.
\end{proposition}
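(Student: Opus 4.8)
The plan is to mirror the proof of Proposition~\ref{Prop:RealformsDanielewski2a}: first count the real forms cohomologically, then pin them down by an explicit linear change of coordinates. For the count, I would simply combine Lemma~\ref{Lemm:H1B-new}, which says the inclusion $A_\C(p)\hookrightarrow\Aut_\C(D_p)$ induces a bijection $H^1(A_\C(p))\to H^1(\Aut_\C(D_p))$, with Lemma~\ref{Az2}, which gives that $H^1(A_\C(p))$ has exactly two elements, the classes of $\tau_1=(x,y,z)$ and $\tau_2=(y,x,-z)$. Through the correspondence between $H^1(\Aut_\C(D_p))$ and equivalence classes of real structures, this already shows that $D_p$ has precisely two nonisomorphic real forms, represented by the real structures $\tau_1\circ\rho$ and $\tau_2\circ\rho$, where $\rho\colon(x,y,z)\mapsto(\overline{x},\overline{y},\overline{z})$ is the standard conjugation.

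Next I would identify these two forms with $T_1$ and $T_2$. For each $i$ I would exhibit a linear automorphism $\theta_i\in\GL_3(\C)\subset\Aut(\A^3_\C)$ satisfying $\tau_i\circ\rho\circ\theta_i=\theta_i\circ\rho$ and such that $\theta_i^*(xy-z^2)$ is, up to a nonzero real scalar, the defining polynomial of $T_i$; then $\theta_i^{-1}(D_p)$ is the complexification of $T_i$, so the real form attached to $\tau_i\circ\rho$ is exactly $T_i$. Concretely, I would take $\theta_1=(x+y,x-y,z)$, which is defined over $\R$ (hence commutes with $\rho$, matching $\tau_1=\id$) and satisfies $\theta_1^*(xy-z^2)=x^2-y^2-z^2$, giving $T_2$; and $\theta_2=(x+\I y,x-\I y,\I z)$, which satisfies $\theta_2^*(xy-z^2)=x^2+y^2+z^2$, giving $T_1$. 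As in Proposition~\ref{Prop:RealformsDanielewski2a}, I would record these data in a short table so that both verifications reduce to a direct substitution.

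Finally, to confirm that $T_1$ and $T_2$ are genuinely distinct real forms, I would compare their real loci. Over $\R$ the equation $x^2+y^2+z^2=0$ has the unique solution $(0,0,0)$, so $T_1(\R)$ is a single point, whereas $x^2-y^2-z^2=0$ rewrites as $x^2=y^2+z^2$, the affine cone over the circle $\mathbb{S}^1$, which is infinite. Hence $T_1(\R)$ and $T_2(\R)$ are not homeomorphic, so $T_1$ and $T_2$ are neither homeomorphic nor isomorphic.

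I expect the only delicate point to be the verification that $\theta_2$ conjugates the standard structure into the cocycle $(y,x,-z)$: one has to unwind $\theta_2\circ\rho\circ\theta_2^{-1}=(\theta_2\circ\overline{\theta_2}^{\,-1})\circ\rho$ and check that the linear map $\theta_2\circ\overline{\theta_2}^{\,-1}$ really equals $(y,x,-z)$, which is a small matrix computation using $\overline{\theta_2}=(x-\I y,x+\I y,-\I z)$. The accompanying sign and scalar bookkeeping in $\theta_i^*(xy-z^2)$ --- making sure each $\theta_i$ lands on the intended surface rather than its companion --- is where a careless error is most likely to creep in, but none of this is conceptually hard.
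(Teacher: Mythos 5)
Your proposal is correct and follows essentially the same route as the paper: combine Lemma~\ref{Lemm:H1B-new} with Lemma~\ref{Az2} to get the two cohomology classes, then realise each class by an explicit $\theta_i\in\GL_3(\C)$ with $\tau_i\circ\rho\circ\theta_i=\theta_i\circ\rho$ and read off the real equation from $\theta_i^*(xy-z^2)$, finally separating $T_1$ and $T_2$ by their real loci. Your choices of $\theta_i$ (with $(x+y,x-y,z)$ in place of the paper's $(x-y,x+y,z)$) and the resulting assignments to $T_1$, $T_2$ all check out.
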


\begin{proof}
By Lemma~\ref{Lemm:H1B-new} and Lemma~\ref{Az2}, $H^1(\Aut_\C(D_{p}))$ contains exactly two elements, namely the classes of $\tau_2=(x,y,z)$, $\tau_1=(y,x,-z)$. Therefore, there are exactly two nonisomorphic real forms of $D_p$.

To see that they correspond to the real surfaces $T_1,T_2$, we give, for every $i=1,2$, an element $\theta_i\in \mathrm{GL}_3(\C)\subset \Aut(\A^3_\C)$ such that $\tau_i \circ \rho =\theta_i \circ \rho\circ\theta_i^{-1}$, where $\rho$ is the standard real form $(x,y,z)\mapsto (\overline{x},\overline{y},\overline{z})$ on $\A^3_\C$, and such that $\theta_i^{-1}(D_p)$ is the complexification of $T_i$, i.e.,~is $T_i\times _{\mathrm{Spec}(\mathbb{R})}\mathrm{Spec}(\mathbb{C})$. 
\[\begin{array}{|c|c|c|c|}
\hline
i &\tau_i&\theta_i \text{ with }  \tau_i \circ \rho \circ \theta_i =\theta_i \circ \rho& \theta_i^*(xy-z^2)\\
\hline
1 & (y,x,-z) & (x+\mathbf{i}y,x-\I y,\I z) &    x^2+y^2+z^2 \\

2&(x,y,z)& (x-y,x+y,z) & x^2-y^2-z^2 \\ \hline \end{array}\]
The equation of $T_1$ directly gives $T_1(\mathbb{R})=\{(0,0,0)\}$, whereas $T_2(\mathbb{R})$ is a cone over the conic $x^2-y^2=z^2$ in $\mathbb{P}^2_\R$, whose set of real points is diffeomorphic to $\mathbb{S}^1$.
\end{proof}

{\begin{proposition}\label{Prop:RealformsDanielewski3}
Let $p\in\R[z]$ be a polynomial of degree $d\geq3$ in reduced form and define $D_{p}=\Spec(\C[x,y,z]/(xy-p(z)))$.
\begin{enumerate}[leftmargin=*]
\item \label{RealFormDHpinfinite} If $H_p$ is infinite, then $p=z^d$ and there are two cases:
\begin{enumerate}[leftmargin=*]
\item \label{RealFormDHpinfinitedodd}If  $d$ is odd, then $D_p$ has exactly two isomorphism classes of real forms, namely those of
\[\Spec(\R[x,y,z]/(x^2\pm y^2-z^d))\]
\item \label{RealFormDHpinfinitedeven} If $d$ is even, then $D_p$ has exactly three isomorphism classes of real forms, namely those of
\begin{align*}&\Spec(\R[x,y,z]/(x^2+y^2+z^d)), \\
\text{and}\quad &\Spec(\R[x,y,z]/(x^2\pm y^2-z^d)).
\end{align*}
\end{enumerate}
\item \label{RealFormDHpfinite} If $H_p$ is cyclic of order $n$, then $p=z^mq(z^n)$ for some integer $m\geq0$ and some monic polynomial $q\in\R[z]\setminus \R$ with $q(0)\neq 0$, and there are three cases:
\begin{enumerate}[leftmargin=*]
\item\label{RealFormDHpfinitenodd} If  $n$ is odd, then $D_p$ has exactly three isomorphism classes of real forms, namely those of
\begin{align*}&\Spec(\R[x,y,z]/(x^2+y^2+z^mq(z^n))), \\
\text{and}\quad &\Spec(\R[x,y,z]/(x^2\pm y^2-z^mq(z^n))).
\end{align*}
\item\label{RealFormDHpfinitenevendodd} If $n$ is even and $\deg(p)$ -- and thus $m$ -- is odd, then $D_p$ has exactly four isomorphism classes of real forms, namely those of
\[\Spec(\R[x,y,z]/(x^2\pm y^2-z^mq(\pm z^n))).\]
\item\label{RealFormDHpfinitenevendeven6} If  $n$ is even and  $\deg(p)$ -- and thus $m$ -- is even, then $D_p$ has exactly six isomorphism classes of real forms, namely those of
\begin{align*}&\Spec(\R[x,y,z]/(x^2+y^2+z^mq(\pm z^n))), \\
\text{and}\quad &\Spec(\R[x,y,z]/(x^2\pm y^2-z^mq(\pm z^n))).
\end{align*}
\end{enumerate}
\end{enumerate}
\end{proposition}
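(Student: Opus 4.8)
The plan is to combine the cohomology computation in Lemma~\ref{Lemm:H1A} with the dictionary between $H^1(\Aut_\C(D_p))$ and isomorphism classes of real forms, then to exhibit explicit real surfaces realising each cohomology class. By Lemma~\ref{Lemm:H1B-new}, the inclusion $A_\C(p)\hookrightarrow \Aut_\C(D_p)$ induces a bijection $H^1(A_\C(p))\iso H^1(\Aut_\C(D_p))$, and since $p\in\R[z]$ the standard conjugation $\rho\colon(x,y,z)\mapsto(\overline{x},\overline{y},\overline{z})$ is a real structure on $D_p$; so the real forms up to isomorphism are in bijection with $H^1(A_\C(p))$. Thus the count in each of the five cases is immediate from Lemma~\ref{Lemm:H1A}: infinite $H_p$ forces $p=z^d$ (Lemma~\ref{lemma:Hp}\ref{H1Hp1}), giving two or three classes according to the parity of $d$; finite $H_p$ of order $n$ gives three, four or six classes according to whether $n$ is odd, or $n$ even with $d$ odd, or $n$ even with $d$ even.

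The substantive work is to match each cohomology class $\tau\in Z^1(A_\C(p))$, listed explicitly in Lemma~\ref{Lemm:H1A}, with a concrete real surface. Following exactly the method of Propositions~\ref{Prop:RealformsDanielewski2a} and~\ref{Prop:RealformsDanielewski2b}, for each representative $\tau$ I would produce a linear change of coordinates $\theta\in\GL_3(\C)\subset\Aut(\A^3_\C)$ satisfying $\tau\circ\rho=\theta\circ\rho\circ\theta^{-1}$, equivalently $\tau\circ\rho\circ\theta=\theta\circ\rho$, so that $\theta^{-1}(D_p)$ is defined over $\R$ and is the complexification of the claimed real form. The natural choices are, as before, $\theta=(x+\I y,\,x-\I y,\,\gamma z)$ to diagonalise the swap-type cocycles $(y,x,\mu z)$ into a form $x^2+y^2+(\text{real in }z)$, and $\theta=(x-y,\,x+y,\,\gamma z)$ or $\theta=(x+\I y,\,-x+\I y,\,\gamma z)$ to handle the diagonal cocycles $(ax,ay,cz)$ and produce the $x^2-y^2$ sign. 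In each case I would compute $\theta^*(xy-p(z))$ and read off the real defining equation.

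The arithmetic to verify is precisely how conjugation by $\theta$ transforms the $z$-component $\mu z$ (with $\mu\in\{1,c\}$) and the scalars $a$ with $a^2=c^{\deg(p)}$. Writing $p(z)=z^mq(z^n)$, the crucial point is that replacing $z$ by a suitable multiple $\gamma z$ with $\gamma$ an appropriate root of $c$ turns $p(cz)=c^{\deg(p)}p(z)$ into the sign pattern $z^m q(\pm z^n)$ appearing in the statement: the outer sign $(-1)^b$ in $z^mq((-1)^c z^n)$ comes from the monomial factor $z^m$ (hence depends on the parity of $m$, i.e.\ of $d$), while the inner sign $(-1)^c$ on $z^n$ records the nontrivial class in $H^1(H_p)\simeq H^1(\mu_n)$ when $n$ is even. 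I expect the main obstacle to be bookkeeping rather than conceptual: one must check that the finitely many $\theta$'s can be chosen to land simultaneously on real equations and that distinct cocycle classes really give the non-conjugate (hence non-isomorphic) real structures, with no accidental coincidence collapsing the count. Since the cohomology classes are already proved distinct in Lemma~\ref{Lemm:H1A}, the non-isomorphism of the resulting real forms is automatic, and it remains only to confirm that each exhibited real surface is genuinely a real form by checking that its complexification is $D_p$ via the constructed $\theta$.
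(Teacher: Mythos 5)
Your proposal follows exactly the paper's own argument: reduce to $H^1(A_\C(p))$ via Lemma~\ref{Lemm:H1B-new} and Lemma~\ref{Lemm:H1A}, then for each cocycle representative $\tau_i$ exhibit a linear $\theta_i\in\GL_3(\C)$ with $\tau_i\circ\rho=\theta_i\circ\rho\circ\theta_i^{-1}$ and read off the real equation from $\theta_i^*(xy-p(z))$; the paper's tables use precisely the shapes $(x\pm\I y,\ldots,\alpha z)$ and $(\beta(x\pm y),\ldots,\alpha z)$ with $\alpha^2=c$, $\beta^2=a$ that you anticipate. Your observation that distinctness of the cohomology classes already guarantees non-isomorphism of the resulting real forms is also how the paper concludes, so the proposal is correct and essentially identical in approach.
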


\begin{proof}
Define 
\[\begin{array}{lll}
	\tau_1=(x,y,z), & \tau_2=(ax,ay,cz), & \tau_3=(y,x,z)\\
	\tau_4=(-y,-x,z), & \tau_5=(ay,a x,cz), & \tau_6=(-ay,-a x,cz),
\end{array} 
\]
which are the $1$-cocycles appearing in Lemma~\ref{Lemm:H1A}.

\ref{RealFormDHpinfinite} Suppose that $H_p$ is infinite. Then, $D_p$ is the surface of equation $xy=z^d$, and by Lemma~\ref{Lemm:H1A}\ref{H1A2}-\ref{H1A3}, we only need to consider $\tau_1$, $\tau_3$ and $\tau_4$.
In the table below, we produce, for every $i\in\{1,3,4\}$, an element $\theta_i\in \mathrm{GL}_3(\C)\subset \Aut(\A^3_\C)$ such that $\tau_i \circ \rho =\theta_i \circ \rho\circ\theta_i^{-1}$, where $\rho$ is the standard real form $(x,y,z)\mapsto (\overline{x},\overline{y},\overline{z})$ on $\A^3_\C$ and compute the equation of the hypersurface $\theta_i^{-1}(D_p)\subset\A_\C^3$. Combining Lemma~\ref{Lemm:H1B-new} with Lemma~\ref{Lemm:H1A}, this proves~\ref{RealFormDHpinfinite}.
\[ \begin{array}{|c|c|c|c|c|}
\hline
i &\tau_i&\theta_i \text{ with } \tau_i \circ \rho \circ \theta_i =\theta_i \circ \rho & \theta_i^*(xy-z^d)\\
\hline
1&(x,y,z)& (x+y,x-y,z) & x^2-y^2-z^d \\

3 & (y,x,z) &(x+\I y,x-\I y,z) &    x^2+y^2-z^d \\

4 & (-y,-x,z) &(x+\I y,-x+\I y,z) &    -(x^2+y^2+z^d) \\

\hline \end{array}\]

\ref{RealFormDHpfinite} Suppose that $H_p$ is cyclic of finite order $n\geq1$. Then, $D_p$ is given by an equation of the form $xy=z^mq(z^n)$ with $m\geq0$ and $\deg(q) \geq 1$ such that $q(0)\neq 0$.  Let $c=\e^{2\pi\I/n}$ be a primitive $n$-th root of unity and set $a=\e^{2\pi\I m/2n}$, which satisfies $a^2=c^m=c^{\deg(p)}$. 

Fix $\alpha=\e^{2\pi\I/2n}$ and $\beta=\e^{2\pi\I m/4n}$, for which $\alpha^2=c$ and $\alpha^n=-1$, and $\beta^2=a=\alpha^m$, respectively. In the table below, we produce, for every $i\in\{1,\ldots,6\}$, an element $\theta_i\in \mathrm{GL}_3(\C)\subset \Aut(\A^3_\C)$ such that $\tau_i \circ \rho =\theta_i \circ \rho\circ\theta_i^{-1}$, where $\rho$ is the standard real form $(x,y,z)\mapsto (\overline{x},\overline{y},\overline{z})$ on $\A^3_\C$ and compute the equation of the hypersurface $\theta_i^{-1}(D_p)\subset\A_\C^3$. Combining Lemma~\ref{Lemm:H1B-new} with Lemma~\ref{Lemm:H1A}, this proves~\ref{RealFormDHpfinite}. 

\[ \begin{array}{|c|c|c|c|}
\hline
i &\tau_i&\theta_i \text{ with }\tau_i \circ \rho \circ \theta_i =\theta_i \circ \rho& \theta_i^*(xy-z^mq(z^n))\\
\hline
1&(x,y,z)& (x+y,x-y,z) &  x^2-y^2-z^mq(z^n) \\

2&(ax,ay,cz)& (\beta(x+y),\beta(x-y),\alpha z) & \beta^2(x^2-y^2-z^mq(-z^n)) \\

3 & (y,x,z) &(x+\I y,x-\I y,z)   & x^2+y^2-z^mq(z^n) \\

4 & (-y,-x,z) &(x+\I y,-x+\I y,z)  & -(x^2+y^2+z^mq(z^n)) \\

5 & (ay,ax,cz) &(\beta(x+\I y),\beta(x-\I y),\alpha z) &    \beta^2(x^2+y^2-z^mq(-z^n)) \\

6 & (-ay,-ax,cz) &(\beta(x+\I y),\beta(-x+\I y),\alpha z)   & -\beta^2(x^2+y^2+z^mq(-z^n))\\
 \hline \end{array}\]
\end{proof}
We finalise this section by proving Theorem~\ref{ThmA} which summarises Propositions~\ref{Prop:RealformsDanielewski2a},~\ref{Prop:RealformsDanielewski2b} and~\ref{Prop:RealformsDanielewski3}.
\begin{proof}[Proof of Theorem~$\ref{ThmA}$]
We recall that $p\in \R[z]$ is a polynomial in reduced form of degree $d\ge 2$, $p(z)=z^mq(z^n)$ where $m\ge 0$, $n\ge 1$, $q\in \R[z]$, $q(0)\neq 0$ and where $q$ and $n$ are chosen such that $n$ is maximal if $q\neq1$. In particular, $q$, $n$ and $m$ are uniquely determined by $p$. 
	
We first remark that $S_{abc}$ is a real form of $D_p$ for all $a,b,c\in \{0,1\}$. Indeed, the linear map $(x+\I^{a-1}y,x-\I^{a-1}y,z)\in\Aut(\A^3_\C)$ sends the hypersurface $x^2+(-1)^ay^2+(-1)^bz^mq((-1)^cz^n)=0$ onto that of equation $xy+(-1)^bz^mq((-1)^cz^n)=0$, which is isomorphic to $D_p$ by Theorem~\ref{theo:DanielewskiIsoClasses}. Propositions~\ref{Prop:RealformsDanielewski2a},~\ref{Prop:RealformsDanielewski2b} and~\ref{Prop:RealformsDanielewski3} then give the number $2\le i\le 6$ of isomorphism classes together with a list of representatives.

Suppose first that $q=1$. Then $p(z)=z^d=z^m$ and  $H_p$ is thus infinite. 
If $d=2$, then Proposition~\ref{Prop:RealformsDanielewski2b} gives $i=2$ together with the representatives $S_{000}$ and $S_{110}$. 
If $d\ge 3$, Proposition~\ref{Prop:RealformsDanielewski3}\ref{RealFormDHpinfinite} gives $i=2$ when $d$ is odd and $i=3$ when $d$ is even. In the case where $d$ is odd,  Proposition~\ref{Prop:RealformsDanielewski3}\ref{RealFormDHpinfinite}\ref{RealFormDHpinfinitedodd} gives the two representatives $\Spec(\R[x,y,z]/(x^2+ y^2-z^d))=S_{010}$ and $\Spec(\R[x,y,z]/(x^2- y^2-z^d))=S_{110}$.  Using the isomorphism $(x,y,-z)\colon S_{000}\iso S_{010}$, we obtain the two representatives given in the statement of Theorem~\ref{ThmA}. In the  case where $d$ is even, the three representatives of Proposition~\ref{Prop:RealformsDanielewski3}\ref{RealFormDHpinfinite}\ref{RealFormDHpinfinitedeven} are precisely $S_{000}$,  $S_{010}$ and $S_{110}$.

Suppose now that $q\neq 1$. Hence, $\deg(q)\ge 1$, and as $n$ was chosen maximal, the group \[H_p=\{\lambda\in \C^*\mid p(\lambda z)=\lambda^dp(z)\}=\{(\lambda z)\mid \lambda\in \C^*, \lambda^n=1\}\] is cyclic of order $n$ by Lemma~\ref{lemma:Hp}. If $d=2$, then $p=z^2+\mu$, for some $\mu\in \R^*$. So $(m,n)=(0,2)$ and the surface $D_p$ is  isomorphic to $D_{p'}$ with $p'=z^2-1$ by Theorem~\ref{theo:DanielewskiIsoClasses}. Hence, Proposition~\ref{Prop:RealformsDanielewski2a}  gives $i=4$ and provides the four representatives  $\Spec(\R[x,y,z]/(x^2\pm y^2+z^2\pm 1))$. We now need to check that these surfaces are isomorphic to the four surfaces $S_{abb},a,b\in \{0,1\}$ that are given in the statement of Theorem~\ref{ThmA}. Since these latter are defined by $\Spec(\R[x,y,z]/(x^2\pm y^2+z^2\pm \mu))$, it actually suffices to apply the linear automorphism $(\xi x, \xi y, \xi z)\in\Aut(\A^3_\R)$ where $\xi=\sqrt{\lvert \mu \rvert}$.

If $d\ge 3$, Proposition~\ref{Prop:RealformsDanielewski3}\ref{RealFormDHpfinite}  specifies  three different cases.

If $n$ is odd, then $i=3$ and the representatives in Proposition~\ref{Prop:RealformsDanielewski3}\ref{RealFormDHpfinite}\ref{RealFormDHpfinitenodd} are precisely the surfaces $S_{000}$, $S_{010}$ and $S_{110}$.

If $n$ is even and $d$ is odd, then $i=4$ and the representatives  given by Proposition~\ref{Prop:RealformsDanielewski3}\ref{RealFormDHpfinite}\ref{RealFormDHpfinitenevendodd}  are the surfaces $S_{a1c}$ with $a,c\in \{0,1\}$. As the map $(x,y,-z)\in\Aut(\A^3_\R)$ sends $S_{a1c}$ to $S_{a0c}$,  we obtain $S_{a1c}\simeq S_{a0c}$, and in particular $S_{a1c}\simeq S_{acc}$. This gives the result.

The remaining case is when $n$ and $d$ are both even. Here, $i=6$ and  the real forms are $S_{00c},S_{a1c},a,c\in \{0,1\}$ by Proposition~\ref{Prop:RealformsDanielewski3}\ref{RealFormDHpfinite}\ref{RealFormDHpfinitenevendeven6}.
 \end{proof}

\section{The surfaces $(\A^1_\C\setminus \{0\})^2$ and $\A^1_\C\times (\A^1_\C\setminus \{0\})$}\label{CCC}

In this section, we compute the real forms of the two affine surfaces $(\A^1_\C\setminus \{0\})^2$ and $\A^1_\C\times (\A^1_\C\setminus \{0\})$.  In Propositions~\ref{Prop:Cstar2realforms} and \ref{Prop:CCstarrealforms}, we prove that these surfaces have respectively six and four isomorphism classes of real forms. In the case of $(\A^1_\C\setminus \{0\})^2$, a partial result, together with a sketch of the proof, is given in \cite[Lemma~1.5 and Remark 1.6]{MoserTerpereau}. Our proof follows essentially the same lines.

The following well-known result is an easy exercise. We give the proof for the sake of completeness.

\begin{lemma}\label{InvGL2Z}
There are exactly three conjugacy classes of elements of order $2$ in $\mathrm{GL}_2(\Z)$, namely those of $\sigma_1=\left(\begin{smallmatrix} 1 & \phantom{-}0\\ 0 & -1 \end{smallmatrix}\right)$, $\sigma_2=\left(\begin{smallmatrix} -1 & \phantom{-}0\\ \phantom{-}0 & -1 \end{smallmatrix}\right)$ and $\sigma_3=\left(\begin{smallmatrix} 0 & 1 \\ 1 & 0 \end{smallmatrix}\right)$. 
\end{lemma}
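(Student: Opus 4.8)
The plan is to classify elements of order $2$ in $\mathrm{GL}_2(\mathbb{Z})$ up to conjugacy by analysing their rational canonical form and then tightening the integral structure. An element $\sigma \in \mathrm{GL}_2(\mathbb{Z})$ of order $2$ satisfies $\sigma^2 = I$, so its minimal polynomial divides $x^2 - 1 = (x-1)(x+1)$. Since the order is exactly $2$, the minimal polynomial is one of $x-1$, $x+1$, or $(x-1)(x+1)$, but the first two force $\sigma = I$ or $\sigma = -I$; only $\sigma = -I$ has order $2$ among these, giving the class of $\sigma_2$. So I may assume the minimal polynomial is $(x-1)(x+1)$, whence $\sigma$ is diagonalisable over $\mathbb{Q}$ with eigenvalues $+1$ and $-1$, and in particular $\det(\sigma) = -1$ and $\operatorname{tr}(\sigma) = 0$.

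The main step is then to show that every such $\sigma$ (with eigenvalues $\pm 1$, trace $0$, determinant $-1$) is conjugate \emph{within} $\mathrm{GL}_2(\mathbb{Z})$ to $\sigma_1 = \left(\begin{smallmatrix} 1 & 0 \\ 0 & -1 \end{smallmatrix}\right)$. The natural approach is via the $+1$-eigenlattice: let $L = \ker(\sigma - I) \cap \mathbb{Z}^2$, a saturated (primitively embedded) rank-one sublattice of $\mathbb{Z}^2$, so that $\mathbb{Z}^2 / L$ is free of rank $1$. I would pick a generator $v_1$ of $L$, which is then a primitive vector and hence extends to a $\mathbb{Z}$-basis $(v_1, w)$ of $\mathbb{Z}^2$. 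In this basis $\sigma$ is upper triangular with diagonal $(1, -1)$, so $\sigma(w) = b v_1 - w$ for some $b \in \mathbb{Z}$; replacing $w$ by $v_2 = w - \tfrac{b}{2} v_1$ would diagonalise it, but the issue is that $\tfrac{b}{2}$ need not be an integer. This is precisely the parity obstruction: if $b$ is even one lands exactly on $\sigma_1$, and if $b$ is odd one must argue more carefully.

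The hard part will therefore be handling the parity of $b$ and confirming that $\sigma_1$ and $\sigma_3$ are genuinely non-conjugate. For the odd case I expect to use the $(-1)$-eigenlattice $M = \ker(\sigma + I) \cap \mathbb{Z}^2$ symmetrically, or equivalently to observe that the relevant invariant is whether $L \oplus M = \mathbb{Z}^2$ or has index $2$; in the former situation $\sigma \sim \sigma_1$, and in the latter $\sigma \sim \sigma_3$. Concretely, for $\sigma_3 = \left(\begin{smallmatrix} 0 & 1 \\ 1 & 0 \end{smallmatrix}\right)$ the eigenvectors are $\left(\begin{smallmatrix} 1 \\ 1 \end{smallmatrix}\right)$ and $\left(\begin{smallmatrix} 1 \\ -1 \end{smallmatrix}\right)$, which together span the index-$2$ sublattice of vectors with even coordinate sum, so $\sigma_3 \not\sim \sigma_1$. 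The cleanest conjugacy invariant separating the two is the isomorphism type of the $\mathbb{Z}[\mathbb{Z}/2]$-module $\mathbb{Z}^2$: for $\sigma_1$ it is $\mathbb{Z}_+ \oplus \mathbb{Z}_-$ (a sum of the two sign representations), while for $\sigma_3$ it is the regular representation $\mathbb{Z}[\mathbb{Z}/2]$, and these are non-isomorphic $\mathbb{Z}[\mathbb{Z}/2]$-modules. I would verify directly that any $\sigma$ with odd $b$ has $\mathbb{Z}^2$ the regular representation and is conjugate to $\sigma_3$, while even $b$ gives $\sigma_1$; the three classes $\sigma_1, \sigma_2, \sigma_3$ then exhaust all possibilities and are pairwise distinguished by determinant ($\det \sigma_2 = 1$ versus $\det \sigma_1 = \det \sigma_3 = -1$) together with the module-type invariant just described.
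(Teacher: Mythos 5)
Your proposal is correct and follows essentially the same route as the paper: both reduce to the case of eigenvalues $\pm 1$, pick a primitive eigenvector, complete it to a $\Z$-basis to reach an upper-triangular form $\left(\begin{smallmatrix} 1 & b \\ 0 & -1\end{smallmatrix}\right)$, and identify the parity of $b$ as the obstruction separating $\sigma_1$ from $\sigma_3$. Your non-conjugacy argument via the $\Z[\Z/2]$-module type (equivalently, the index of the sum of the eigenlattices) is just a conceptual repackaging of the paper's direct computation that a would-be conjugating matrix built from eigenvectors of $\sigma_3$ has determinant $-2ab \notin \{\pm 1\}$.
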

\begin{proof}We first prove that the involutions $\sigma_1,\sigma_2,\sigma_3$ are pairwise not conjugate. As $\det(\sigma_2)=1$ and $\det(\sigma_1)=\det(\sigma_3)=-1$, we only need to prove that $\sigma_1$ and $\sigma_3$ are not conjugate. If they were, we would have a matrix $M=\left(\begin{smallmatrix} a & b\\ c & d \end{smallmatrix}\right)\in \mathrm{GL}_2(\Z)$ such that $\left(\begin{smallmatrix} a \\ c \end{smallmatrix}\right)$ and $\left(\begin{smallmatrix} b \\ d \end{smallmatrix}\right)$ are eigenvectors of $\sigma_3$ of eigenvalue $1$ and $-1$ respectively. This would imply  $c=a$ and $d=-b$, which is impossible, as $\det(M)=ad-bc=-2ab\notin\{\pm1 \}$.

It remains to prove that every element $M\in \mathrm{GL}_2(\Z)$ of order $2$ is conjugate to $\sigma_1,\sigma_2$ or $\sigma_3$. If $M\neq\sigma_2$, then the eigenvalues of $M$ are $1$ and $-1$. Consider an eigenvector of $M$ with integer entries prime to each other and complete it to a matrix of $\mathrm{GL}_2(\Z)$ that conjugates $M$ to $M'=\left(\begin{smallmatrix} 1 & b\\ 0 & d \end{smallmatrix}\right)$ for some $b,d\in \Z$. Note that $d=-1$, since $M$ has eigenvalues $1$ and $-1$. Conjugating $M'$ by  $\left(\begin{smallmatrix} 1 & \mu\\ 0 & 1 \end{smallmatrix}\right)$ with $\mu\in \Z$, we get the matrix $\left(\begin{smallmatrix} 1 & b-2\mu\\ 0 & -1 \end{smallmatrix}\right)$. If $b$ is even, then $M$ is thus  conjugate to $\sigma_1$. If $b$ is odd, then $M$ is conjugate to $\left(\begin{smallmatrix} 1 & \phantom{-}1\\ 0 & -1 \end{smallmatrix}\right)$, which is conjugate to $\sigma_3$ by $\left(\begin{smallmatrix} 1 & 1\\ 1 & 0 \end{smallmatrix}\right)$.
\end{proof}

\begin{proposition}\label{Prop:Cstar2realforms}$ $
\begin{enumerate}[leftmargin=*]
\item\label{realformsCstar}
The affine complex curve $\A^1_\C\setminus \{0\}$ has exactly three equivalence classes of real structures, namely those of \[\rho_1\colon x\mapsto \overline{x}, \, \rho_2\colon x\mapsto \overline{x}^{-1}, \, \rho_3\colon x\mapsto -\overline{x}^{-1}.\] The corresponding real forms of $\A^1_\C\setminus \{0\}$ are the three affine conics $\Gamma_1,\Gamma_2,\Gamma_3\subset \A^2_\R$ given  by 
\[xy-1=0, \, x^2+y^2-1=0 \text{ and } x^2+y^2+1=0,\]
whose real parts are diffeomorphic to $\Gamma_1(\R)\simeq \R^*$, $\Gamma_2(\R)\simeq \mathbb{S}^1$ and $\Gamma_3(\R)=\varnothing$, respectively.
\item\label{realformsCstar2}
The affine complex surface $(\A^1_\C\setminus \{0\})^2$ has exactly six isomorphism classes of real forms, namely those of
\[ \Gamma_1 \times \Gamma_1, \Gamma_1\times \Gamma_2, \Gamma_1\times \Gamma_3, \Gamma_2\times \Gamma_2, \Gamma_3\times \Gamma_3   \text{ and }\A^2_{\R}\setminus \{x^2+y^2=0\}.\]
$($Note that $\Gamma_2\times \Gamma_3$ is isomorphic to $\Gamma_3\times \Gamma_3.)$
\end{enumerate}\end{proposition}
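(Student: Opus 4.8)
The plan is to classify, in both cases, the real structures on the torus up to equivalence by computing $H^1(\Aut_\C(X))$ directly from an explicit description of the automorphism group, and then to match each class with one of the listed real forms. Throughout I use that a real structure is the same as an anti-linear ring involution $\sigma$ of the Laurent-polynomial coordinate ring, and that equivalence of real structures is conjugacy by a $\C$-automorphism.

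For \ref{realformsCstar} I would first record that every $\C$-automorphism of $\C[x^{\pm1}]$ sends $x$ to a unit, so $\Aut_\C(\A^1_\C\setminus\{0\})=\{x\mapsto cx^{\pm1}\mid c\in\C^*\}\cong\C^*\rtimes\Z/2$. An anti-linear involution then satisfies $\sigma(x)=cx$ or $\sigma(x)=cx^{-1}$, and $\sigma^2=\id$ forces $\lvert c\rvert=1$ in the first case and $c\in\R^*$ in the second. Conjugating by $x\mapsto ax$ multiplies $c$ by $a/\overline a$ in the first case and by $\lvert a\rvert^{-2}$ in the second, so the first case collapses to $\rho_1$ and the second to $\rho_2$ (if $c>0$) or $\rho_3$ (if $c<0$). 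These three are genuinely distinct: the Galois-equivariant surjection $\Aut_\C\twoheadrightarrow\Z/2$ induces a map $H^1(\Aut_\C)\to H^1(\Z/2,\Z/2)=\Z/2$ separating $\rho_1$ from $\rho_2,\rho_3$, while the sign of $c$ is preserved by all conjugations, separating $\rho_2$ from $\rho_3$. Computing the fixed ring of each $\sigma$ (equivalently, embedding $\A^1_\C\setminus\{0\}$ suitably in $\A^2$) yields the conics $\Gamma_1,\Gamma_2,\Gamma_3$ with real loci $\R^*$, $\mathbb{S}^1$ and $\varnothing$.

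For \ref{realformsCstar2} the same reasoning gives $\Aut_\C((\A^1_\C\setminus\{0\})^2)=(\C^*)^2\rtimes\GL_2(\Z)$, where the $\GL_2(\Z)$-factor records the exponent matrix of the monomials $\sigma(x),\sigma(y)$. A real structure $\sigma$ determines a matrix $M\in\GL_2(\Z)$, and $\sigma^2=\id$ forces $M^2=I$; conjugating by a monomial automorphism conjugates $M$ in $\GL_2(\Z)$, so by Lemma~\ref{InvGL2Z} (together with the case $M=I$) we may assume $M\in\{I,\sigma_1,\sigma_2,\sigma_3\}$. I would then treat the four base cases via the induced map $H^1(\Aut_\C)\to H^1(\Z/2,\GL_2(\Z))$. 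For $M=I$ the torus splits as a product of two copies of the $\C^*$-case and $H^1((\C^*)^2)=H^1(\C^*)^2$ is trivial by Lemma~\ref{Lem:H1CCstar}, giving the single form $\Gamma_1\times\Gamma_1$. For $M=\sigma_1=\mathrm{diag}(1,-1)$ the structure is standard on the first factor and an inversion on the second; the first-factor scalar trivialises and the second reduces to a sign, giving $\Gamma_1\times\Gamma_2$ and $\Gamma_1\times\Gamma_3$. For $M=\sigma_3$ (the swap) the nontrivial Galois element exchanges and conjugates the two $\C^*$-coordinates, so the swap-twisted torus is an induced module and its $H^1$ vanishes by Hilbert's Theorem~90, producing a single form, the Weil restriction, whose real locus $\R^2\setminus\{0\}$ realises $\A^2_\R\setminus\{x^2+y^2=0\}$.

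The delicate case is $M=\sigma_2=-I$, where $\sigma$ inverts both coordinates; after trivialising the absolute values as in \ref{realformsCstar}, the cocycle is recorded by a sign vector $(\epsilon_1,\epsilon_2)\in\{\pm1\}^2$. Since $-I$ is central in $\GL_2(\Z)$, one may still conjugate by every monomial automorphism, and the induced action on $\{\pm1\}^2\cong\mathbb{F}_2^2$ factors through the reduction $\GL_2(\Z)\twoheadrightarrow\GL_2(\mathbb{F}_2)\cong S_3$, which has exactly two orbits: $\{(+,+)\}$ and the three remaining vectors. Hence $M=\sigma_2$ contributes exactly two classes, $\Gamma_2\times\Gamma_2$ and $\Gamma_3\times\Gamma_3\cong\Gamma_2\times\Gamma_3$, which is the identification flagged in the statement. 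Summing over the base cases gives $1+2+1+2=6$ classes, genuinely distinct because the conjugacy class of $M$ is an isomorphism invariant of the form (so e.g.\ $\Gamma_1\times\Gamma_3$ and $\Gamma_3\times\Gamma_3$, both with empty real locus, lie over the inequivalent bases $\sigma_1$ and $\sigma_2$); the real loci $(\R^*)^2,\ \R^*\times\mathbb{S}^1,\ (\mathbb{S}^1)^2,\ \R^2\setminus\{0\}$ confirm the non-empty cases. The main obstacle is precisely this $\sigma_2$ analysis: getting the residual $\GL_2(\Z)$-action on the two-torsion signs right, since a count by scalars alone would wrongly predict three classes rather than two.
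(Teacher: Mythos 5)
Your proposal is correct and follows essentially the same route as the paper: the same explicit description $\Aut_\C((\A^1_\C\setminus\{0\})^n)\simeq(\C^*)^n\rtimes\GL_n(\Z)$, the same reduction of the matrix part to the four conjugacy classes of involutions in $\GL_2(\Z)$ (Lemma~\ref{InvGL2Z}), and the same fiber-by-fiber analysis, with non-conjugate matrices separating the classes with empty real locus and real points separating the rest. The only differences are cosmetic: where the paper finishes the $\sigma_2$ and $\sigma_3$ fibers by exhibiting explicit conjugating automorphisms such as $(x,xy)$ and $(y,x)$, you package the same computations as the two-orbit action of $\GL_2(\mathbb{F}_2)$ on the sign vectors and as vanishing of $H^1$ of an induced module, both of which are correct.
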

\begin{proof}
We recall that  for $n\ge 1$, the   invertible regular functions on $(\A^1_\C\setminus \{0\})^n$ are  the Laurent monomials $\mu x_1^{a_1}\cdots x_n^{a_n}$, with $\mu\in \C^*$, $a_1,\ldots,a_n\in \Z$. This implies that $\Aut_\C((\A^1_\C\setminus \{0\})^n)\simeq (\C^*)^n\rtimes \GL_n(\Z)$, and gives in particular 
\begin{align*}
\Aut(\A^1_\C\setminus \{0\})&=\{\lambda x^a\mid \lambda \in \C^*, a=\pm 1\},\\
\Aut((\A^1_\C\setminus \{0\})^2)&=\{(a x^{m_{11}}y^{m_{12}},b x^{m_{21}}y^{m_{22}})\mid a,b\in \C^*, \left(\begin{smallmatrix} m_{11} & m_{12}\\ m_{21} & m_{22} \end{smallmatrix}\right)\in \mathrm{GL}_2(\Z)\}.\end{align*}

We  prove~\ref{realformsCstar}. As the complexification of $\Gamma_i$ is a smooth affine conic with two points at infinity, it is isomorphic to $\A^1_\C\setminus \{0\}$, and thus, $\Gamma_i$ is a real form of $\A^1_\C\setminus \{0\}$. Since $\Gamma_1$, $\Gamma_2$ and $\Gamma_3$ have nonhomeomorphic real parts, we get three pairwise nonisomorphic real forms. We now prove that these are the only ones. We fix the standard real structure $\rho_1$ that corresponds to the real form $\A^1_\R\setminus \{0\}$, isomorphic to $\Gamma_1$. The description of $\Aut(\A^1_\C\setminus \{0\})$ implies that every element of $Z^1(\Aut(\A^1_\C\setminus \{0\}))$ is either of the form $\nu=( \mu x)$ with $\mu\in \C^*$, $\mu\overline{\mu}=1$, or of the form $\nu=(\mu x^{-1})$ with $\mu\in \R^*$. In the first case, we reduce to $\mu=1$, as $H^1(\C^*)=\{1\}$ (Lemma~\ref{Lem:H1CCstar}), and obtain the trivial real form $\Gamma_1$. In the second case, we choose $\alpha=(\lambda x)$ with $\lambda \in \R,$  $\lambda^2=\lvert \mu\rvert$ and obtain $\alpha^{-1}\circ\nu \circ \overline{\alpha}=(\pm x^{-1})$. This gives the two real structures $\rho_2$ and $\rho_3$, which then necessarily correspond to $\Gamma_2$ and $\Gamma_3$. As $\Gamma_3(\R)=\varnothing$ and as no $x\in \C^*$ satisfies $x=\rho_3(x)=-\overline{x}^{-1}$, we find that $\rho_i$ corresponds to $\Gamma_i$ for $i=1,2,3$.

It remains to prove~\ref{realformsCstar2}. We fix the standard real form $\rho_1\times \rho_1$ and compute $H^1(\Aut((\A^1_\C\setminus \{0\})^2))$. Let $\nu\in Z^1(\Aut((\A^1_\C\setminus \{0\})^2))$ be a $1$-cocycle. As $\Aut((\A^1_\C\setminus \{0\})^2)\simeq (\C^*\times \C^*)\rtimes \GL_2(\Z)$, the $1$-cocycle $\nu$ gives rise to an involution $\sigma\in \GL_2(\Z)$. Up to conjugation, $\sigma$ is equal to precisely one of $\sigma_0=\left(\begin{smallmatrix} 1 & 0\\ 0 & 1 \end{smallmatrix}\right)$, $\sigma_1=\left(\begin{smallmatrix} 1 & \phantom{-}0\\ 0 & -1 \end{smallmatrix}\right)$, $\sigma_2=\left(\begin{smallmatrix} -1 & \phantom{-}0\\ \phantom{-}0 & -1 \end{smallmatrix}\right)$, or $\sigma_3=\left(\begin{smallmatrix} 0 & 1 \\ 1 & 0 \end{smallmatrix}\right)$, see Lemma~\ref{InvGL2Z}. These four being pairwise not conjugate in $\GL_2(\Z)$, two $1$-cocycles arising from two different $\sigma_i$, $\sigma_j$ are not equivalent, so we can study each $\sigma_i$ separately. For~$\sigma_0$, $\sigma_1$ and $\sigma_2$, we can, on each component of the map $\nu$, apply the same reduction as we did above for $\Aut(\A_\C^1\setminus \{0\})$.

If $\sigma=\sigma_0$, then $\nu=(\lambda x,\mu y )$, where $\lambda,\mu\in \C^*$ have modulus $1$. As $H^1(\C^*)=\{1\}$, we can reduce to the case $\lambda=\mu=1$, and get the real structure $\rho_1\times \rho_1$, and thus the real form $\Gamma_1\times \Gamma_1\simeq (\A^1_\R\setminus \{0\})^2$.

If $\sigma=\sigma_1$, then  $\nu=(\lambda x,\mu y^{-1} )$, where $\lambda\in \C^*$ has modulus $1$ and $\mu\in \R^*$. We reduce to $\lambda=1$ and $\mu=\pm 1$,  get two real structures $\rho_1\times \rho_2$ and $\rho_1\times \rho_3$, and thus the real forms $\Gamma_1\times \Gamma_2$ and $\Gamma_1\times \Gamma_3$. These real forms are not isomorphic, as the second one has no real points, whereas the first has.

If $\sigma=\sigma_2$, then $\nu=(\lambda x^{-1} ,\mu y^{-1} )$, where $\lambda,\mu\in \R^*$. We reduce to $\lambda,\mu\in \{\pm 1\}$,  get the four real structures $\rho_i\times \rho_j$, where $i,j=2,3$, and hence four real forms $\Gamma_i\times \Gamma_j$. With $\alpha=(x,xy)$, we obtain $\alpha^{-1} \circ (-x^{-1},y^{-1}) \circ \overline{\alpha}=(-x^{-1},-y^{-1})$, hence an isomorphism $\Gamma_2\times \Gamma_3\iso \Gamma_3\times \Gamma_3$. Similarly, $\alpha=(y,x)$ provides an isomorphism  $\Gamma_2\times \Gamma_3\iso \Gamma_3\times \Gamma_2$. As $\Gamma_2\times \Gamma_2$ has real points and $\Gamma_2\times \Gamma_3$ does not, we obtain exactly two isomorphism classes of real forms in this case.

If $\sigma=\sigma_3$, then $\nu=(\frac{1}{\lambda}y,\overline{\lambda}x)$, for some $\lambda\in \C^*$. With $\alpha=(\frac{1}{\lambda} x,y)$, we obtain $\alpha^{-1} \circ \nu \circ \overline{\alpha}=(y,x)$, resulting in the real structure $\rho'\colon (x,y)\mapsto (\overline{y},\overline{x})$. We use the isomorphism $(\A^1_\C\setminus \{0\})^2=\A^2_\C\setminus\{xy=0\}\iso \A^2_\C\setminus \{x^2+y^2=0\}$, $(x,y)\mapsto (x+y,\I (x-y))$. It conjugates the real structure $\rho'$ to the standard real structure $(x,y)\mapsto (\overline{x},\overline{y})$. The real form induced is then isomorphic to $\A^2_\R\setminus\{x^2+y^2=0\}$.
\end{proof}
\begin{proposition}\label{Prop:CCstarrealforms}$ $
The affine complex surface $\A^1_\C\times (\A^1_\C\setminus \{0\})$ has exactly four isomorphism classes of real forms, namely those of
\[ \A^1_\R \times \Gamma_1, \, \A^1_\R \times \Gamma_2, \, \A^1_\R \times \Gamma_3 \text{ and } \mathbb{P}^2_{\R}\setminus \{x^2+y^2=0\},\]
where $\Gamma_1,\Gamma_2$ and $\Gamma_3$ are the real forms of $\A^1_\C\setminus \{0\}$, given in Proposition~$\ref{Prop:Cstar2realforms}\ref{realformsCstar}.$
\end{proposition}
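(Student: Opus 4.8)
The plan is to mimic the strategy of the proof of Proposition~\ref{Prop:Cstar2realforms}: describe $\Aut_\C(X)$ for $X=\A^1_\C\times(\A^1_\C\setminus\{0\})$, fix the standard real structure, and compute $H^1(\Aut_\C(X))$ by reducing an arbitrary $1$-cocycle to a short list of normal forms. Writing $\C[X]=\C[x,y^{\pm 1}]$ and recalling that its units are the monomials $\mu y^k$ with $\mu\in\C^*$, $k\in\Z$, I would first show that every automorphism has the form
\[(x,y)\longmapsto\bigl(a(y)x+b(y),\ \lambda y^{\epsilon}\bigr),\qquad a\in\C[y^{\pm 1}]^{*},\ b\in\C[y^{\pm1}],\ \lambda\in\C^*,\ \epsilon\in\{\pm1\}.\]
Indeed $f^*(y)$ must be a unit, which forces $f^*(y)=\lambda y^{\epsilon}$ with $\epsilon=\pm1$, and on each fibre of the projection $(x,y)\mapsto y$ the map $f$ restricts to an automorphism of $\A^1$, hence is affine in $x$ with nowhere-vanishing (i.e.\ invertible) leading coefficient $a(y)$. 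This yields a $\Gal(\C/\R)$-equivariant homomorphism $\pi\colon\Aut_\C(X)\to\Aut(\A^1_\C\setminus\{0\})$, $f\mapsto(\lambda y^{\epsilon})$, admitting the section $(\lambda y^\epsilon)\mapsto(x,\lambda y^\epsilon)$.

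I would fix the standard real structure $\rho\colon(x,y)\mapsto(\overline x,\overline y)$, which is the real form $\A^1_\R\times\Gamma_1$, and take $\nu\in Z^1(\Aut_\C(X))$. Applying $\pi$ gives a cocycle of $\Aut(\A^1_\C\setminus\{0\})$, which by Proposition~\ref{Prop:Cstar2realforms}\ref{realformsCstar} is equivalent to exactly one of $\rho_1,\rho_2,\rho_3$; lifting the conjugating element through the section, I may assume the $y$-part of $\nu$ is one of $(y),(y^{-1}),(-y^{-1})$. Expanding the cocycle identity $\nu\circ\overline\nu=\id$ and writing $a(y)=\mu y^k$ then forces $k=0,\,|\mu|=1$ in the $(y)$-case, $|\mu|=1$ with $k$ arbitrary in the $(y^{-1})$-case, and $|\mu|=1$ with $k$ even in the $(-y^{-1})$-case. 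Conjugating by maps $(\eta y^m x,y)$ (which lie in $\ker\pi$, hence preserve the $y$-part) rescales $\mu$ by a unit-modulus factor and shifts $k$ by $2m$, so the multiplier reduces to $1$ in the first and third cases and to either $1$ or $y$ (according to the parity of $k$) in the second. Finally, conjugating by translations $(x+t(y),y)$ I would kill $b$ entirely; this amounts to solving a linear system over $\C[y^{\pm1}]$ whose solvability is the twisted analogue of the vanishing of $H^1((\C[y^{\pm1}],+))$ established in Lemma~\ref{Lem:H1CCstar}, to be checked by comparing coefficients.

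This produces four candidate normal forms for $\nu$: the identity (structure $\rho_1\times\rho_1$), $(x,y^{-1})$ (i.e.\ $\rho_1\times\rho_2$), $(x,-y^{-1})$ (i.e.\ $\rho_1\times\rho_3$), and the twisted cocycle $(yx,y^{-1})$, whose real structure is $\sigma\colon(x,y)\mapsto(\overline x\,\overline y,\overline y^{-1})$. The first three are the forms $\A^1_\R\times\Gamma_1$, $\A^1_\R\times\Gamma_2$ and $\A^1_\R\times\Gamma_3$. To identify the fourth, I would use the isomorphism $\A^1_\C\times(\A^1_\C\setminus\{0\})\iso\mathbb{P}^2_\C\setminus\{x_0^2+x_1^2=0\}$ recovering the point from $X=x_2/p$, $Y=p/q$, where $p=x_0+\I x_1$ and $q=x_0-\I x_1$; since complex conjugation on $\mathbb{P}^2$ interchanges $p\leftrightarrow\overline q$ and $q\leftrightarrow\overline p$, a direct computation shows it transports to exactly $\sigma$, so the fourth form is $\mathbb{P}^2_\R\setminus\{x^2+y^2=0\}$.

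It remains to see these four are pairwise non-isomorphic. Three have distinct images under $\pi$ in $H^1(\Aut(\A^1_\C\setminus\{0\}))$, so are mutually inequivalent, and this separates each from the fourth as well, except that $\A^1_\R\times\Gamma_2$ and $\mathbb{P}^2_\R\setminus\{x^2+y^2=0\}$ share the $y$-part $\rho_2$. The main obstacle is precisely to show that the multiplier-parity distinguishing $(x,y^{-1})$ from $(yx,y^{-1})$ is a genuine invariant, not an artefact of my reductions. I would settle this geometrically by comparing real loci: the real part of $\A^1_\R\times\Gamma_2$ is the cylinder $\R\times\mathbb{S}^1$, while the real conic $\{x^2+y^2=0\}$ consists of the single real point $[0:0:1]$, so the real part of $\mathbb{P}^2_\R\setminus\{x^2+y^2=0\}$ is $\mathbb{P}^2(\R)$ minus a point, a Möbius band; as one is orientable and the other is not, these are not homeomorphic, forcing the two real forms to be non-isomorphic and giving exactly four classes.
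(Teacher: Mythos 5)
Your proposal is correct and follows the same overall skeleton as the paper's proof (describe $\Aut_\C(\A^1_\C\times(\A^1_\C\setminus\{0\}))$, reduce a $1$-cocycle to a short list of normal forms, identify the resulting real forms), but it diverges at the one genuinely delicate point, and in an interesting way. The paper does not project onto $\Aut(\A^1_\C\setminus\{0\})$; it uses the finer homomorphism $\theta\colon(\lambda xy^m+c(y),\mu y^{\pm1})\mapsto\left(\begin{smallmatrix}1&m\\0&\pm1\end{smallmatrix}\right)\in\GL_2(\Z)$, which remembers the exponent $m$ of the $x$-multiplier. Since the Galois action on $\GL_2(\Z)$ is trivial, the conjugacy class of the involution $\theta(\nu)$ is an invariant of the cohomology class, and Lemma~\ref{InvGL2Z} shows that $\left(\begin{smallmatrix}1&0\\0&-1\end{smallmatrix}\right)$ and $\left(\begin{smallmatrix}1&-1\\0&-1\end{smallmatrix}\right)$ are non-conjugate; this is exactly the ``multiplier-parity'' invariance you correctly flag as the main obstacle, established purely algebraically and before any normal-form reduction. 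Your weaker invariant $\pi$ cannot see this, so you compensate at the end by comparing real loci: the cylinder $\R\times\mathbb{S}^1$ for $\A^1_\R\times\Gamma_2$ against the open M\"obius band $\mathbb{P}^2(\R)\setminus\{[0:0:1]\}$ for $\mathbb{P}^2_\R\setminus\{x^2+y^2=0\}$, separated by orientability. Both arguments are sound (and the topological one is in the spirit of how the paper distinguishes forms elsewhere, e.g.\ in Proposition~\ref{Prop:Cstar2realforms}); the paper's $\theta$ buys a uniform algebraic separation of all the relevant cases at once, while your route is more geometric and makes the answer visibly non-isomorphic, at the cost of having to carry out the identification of the fourth form with $\mathbb{P}^2_\R\setminus\{x^2+y^2=0\}$ (your $p$, $q$ computation, which checks out and parallels the paper's change of coordinates $[x:y:z]\mapsto[y+z:\I(y-z):x]$) before the count of classes is complete. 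The remaining reductions you leave implicit (killing $b(y)$ by an averaging translation $\bigl(x-\overline{c}(y)/2,y\bigr)$ or $\bigl(x-\overline{c}(y)y/2,y\bigr)$, and normalising the unit-modulus multiplier) are exactly the computations the paper performs and do go through.
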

\begin{proof}
First, recall that $\Aut(\A^1_\C\times (\A^1_\C\setminus \{0\}))$ is equal to 
\[\{(\lambda xy^m +c(y),\mu y^{\pm 1})\mid \lambda,\mu\in \C^*, m\in \Z, c\in \C[y,y^{-1}]\subset \C(y)\}.\]
To obtain this, we can use the fact that every morphism $\A^1_\C\to\A^1_\C\setminus \{0\}$ is constant, so any automorphism $\varphi$ of $\A^1_\C\times (\A^1_\C\setminus \{0\})$ sends a fibre of the first projection to another fibre. Thus, $\varphi$ is of the form $(a(x,y),b(y))$, where $x\mapsto a(x,y)$ is an automorphism of $\A_\C^1$ for every $y$, and where $y \mapsto b(y)$ is an automorphism of $\A_\C^1 \setminus \{0\}$, since the inverse of $\varphi$ is of the same form. 

We fix the standard real structure $(x,y)\mapsto (\overline{x},\overline{y})$ on $\A^1_\C\times (\A^1_\C\setminus \{0\})$, corresponding to the real form $\A^1_\R \times (\A^1_\R\setminus \{0\})\simeq \A^1_\R \times \Gamma_1$, see Proposition~\ref{Prop:Cstar2realforms}\ref{realformsCstar}, and compute $H^1(\Aut(\A^1_\C\times (\A^1_\C\setminus \{0\})))$. 
We consider the group homomorphism $\theta\colon\Aut(\A^1_\C\times (\A^1_\C\setminus \{0\}))\to \GL_2(\Z)$ that sends $(\lambda xy^m+c(y),\mu y^{\pm 1})$ onto $\left(\begin{smallmatrix} 1 & m \\ 0 & \pm 1 \end{smallmatrix}\right)$. 

Let $\nu\in Z^1(\Aut(\A^1_\C\times (\A^1_\C\setminus \{0\})))$ be a $1$-cocycle. Then, the matrix $\theta(\nu)$ is  an involution in the group $H=\{\left(\begin{smallmatrix} 1 & m \\ 0 & \pm 1 \end{smallmatrix}\right)\mid m\in \Z\,\}\subset \GL_2(\Z)$. This involution is either $\sigma_0=\left(\begin{smallmatrix} 1 & 0 \\ 0 & 1 \end{smallmatrix}\right)$, $\sigma_1=\left(\begin{smallmatrix} 1 & \phantom{-}0 \\ 0 & -1 \end{smallmatrix}\right)$, $\sigma_2=\left(\begin{smallmatrix} 1 & -1 \\ 0 & -1 \end{smallmatrix}\right)$, or more generally $\left(\begin{smallmatrix} 1 & m \\ 0 & -1 \end{smallmatrix}\right)$ for any $m\in \Z$. Conjugating the latter by $\left(\begin{smallmatrix} 1 & a \\ 0 & 1 \end{smallmatrix}\right)$ gives the matrix $\left(\begin{smallmatrix} 1 & m-2a \\ 0 & -1 \end{smallmatrix}\right)$, so we may reduce to the cases of $\sigma_0$, $\sigma_1$ or $\sigma_2$. Since $\left(\begin{smallmatrix} 0 & 1 \\ 1 & 0 \end{smallmatrix}\right)$ is conjugate to $\sigma_2$, using $\left(\begin{smallmatrix} 0 & -1 \\ 1 & -1 \end{smallmatrix}\right)$, Lemma~\ref{InvGL2Z} implies that the involutions $\sigma_1,\sigma_2$ are not conjugate in $\GL_2(\Z)$, and thus also not conjugate in $H$. We then obtain three disjoint families of real forms, up to isomorphism, and may consider the three cases separately.

Consider first the case where $\theta(\nu)=$ $\sigma_0$. Thus, $\nu=(\lambda x+c(y),\mu y)$ for some $\lambda,\mu\in \C^*$ of modulus $1$ and $c\in \C[y, y^{- 1}]$. Considering $\alpha^{-1}\circ\nu\circ \overline{\alpha}$ with $\alpha=(\xi_1x,\xi_2 y)$ where $\xi_1^2=\lambda,\xi_2^2=\mu$, we may reduce to the case where $\lambda=\mu=1$. Then, the $1$-cocycle condition  $\nu \circ\overline{\nu}=1$ gives $c(y)+\overline{c}(y)=0$. Considering $\alpha^{-1}\circ\nu\circ \overline{\alpha}$ with $\alpha=(x+c(y)/2,y)$, we further reduce to the trivial real structure, corresponding to the real form $\A^1_\R\times (\A^1_\R\setminus \{0\})\simeq \A^1_\R\times \Gamma_1$.

We now consider the case where $\theta(\nu)=\sigma_1$. Thus,  $\nu=(\lambda x +c(y),\mu y^{-1})$ for some $\lambda\in \C^*$ with $\lvert \lambda\rvert=1$, $\mu\in \R^*$ and $c\in \C[y, y^{- 1}]$. Considering $\alpha^{-1}\circ\nu\circ \overline{\alpha}$ with $\alpha=(\xi_1 x,\xi_2 y)$, $\xi_1\in \C^*,\xi_2\in \R^*$, $\xi_1^2=\lambda$, $\xi_2^2=\lvert \mu\rvert$, we reduce to the case where $\lambda=1, \mu\in \{\pm 1\}$.  Then, the $1$-cocycle condition  $\nu \circ\overline{\nu}=1$ gives $\overline{c}(y)+c(\mu y^{-1})=0$. Considering $\alpha^{-1}\circ\nu\circ \overline{\alpha}$ with $\alpha=(x-\overline{c}(y)/2,y)$, we reduce to $c=0$. This gives the two real structures $(x,y)\mapsto ( \overline{x},\overline{y}^{-1})$ and $(x,y)\mapsto ( \overline{x},-\overline{y}^{-1})$ and the real forms $\A^1_\R\times \Gamma_2$ and $\A^1_\R\times \Gamma_3$. The first one has real points and the second does not, so these are not isomorphic.

We now study the case where $\theta(\nu)=\sigma_2$. Thus,  $\nu=(\lambda xy^{-1}+c(y),\mu y^{-1})$ for some $\lambda,\mu\in \C^*$. As $\nu \circ \overline{\nu}=1$, we obtain  $\lambda\overline{\lambda}/\overline{\mu}=1$ and $\mu=\overline{\mu}$, whence $\mu\in \R_{>0}$. Considering $\alpha^{-1} \circ \nu \circ \overline{\alpha}$ with $\alpha=(x,\xi y)$, where $\xi\in \R^*$, $\xi^2=\mu$, we may reduce to the case where $\mu=1$, and consequently $\lvert \lambda \lvert=1$. Considering $\alpha^{-1} \circ \nu \circ \overline{\alpha}$ with $\alpha=(\varepsilon x,y)$, where $\varepsilon\in \C^*$ and $\varepsilon^2=\lambda$, we may further assume that $\lambda=1$. Then, the $1$-cocycle condition implies 
$\overline{c}(y)y+c(y^{-1})=0$. With $\alpha= (x- \overline{c}(y)y/2,y)$, we get $\alpha^{-1} \circ \nu \circ \overline{\alpha}= ( xy^{-1},y^{-1})$. Taking the morphism $\A^1_\C\times (\A^1_\C\setminus \{0\})\hookrightarrow \mathbb{P}^2, (x,y)\mapsto [x:y:1]$, we obtain the real structure $\rho'\colon [x:y:z]\mapsto [\overline{x}:\overline{z}:\overline{y}]$ on $\mathbb{P}^2 \setminus \{yz=0\}$. It remains to apply the automorphism $[x:y:z]\mapsto [y+z:\I (y-z):x]$ of $\mathbb{P}^2$, that gives an isomorphism $\mathbb{P}^2 \setminus \{yz=0\}\iso \mathbb{P}^2 \setminus \{x^2+y^2=0\}$, and conjugate the real structure $\rho'$ to the standard one. The corresponding real form is then isomorphic to $\mathbb{P}^2_{\R}\setminus \{x^2+y^2=0\}$.
\end{proof}

\section{Koras-Russell threefolds of the first kind}\label{Section:KR}
\subsection{Automorphisms of the three-space fixing the last coordinate}

Throughout this section, $\kk$ is a field and we denote by $x,y,z$ the coordinates of the affine three-space $\A^3_{\kk}=\Spec(\kk[x,y,z])$.

\begin{notation}
Let $\pi\colon \A^3_{\kk}\to \A^1_{\kk}$ be the projection $(x,y,z)\mapsto z$. Then, denote by $G_{\kk,z}$ the subgroup 
\begin{align*}G_{\kk,z}&=\{f\in \Aut(\A^3_{\kk})\mid \pi \circ f=\pi\}\\
&=\{f\in \Aut(\A^3_{\kk})\mid f^*(z)=z\}\\
&=\{f\in \Aut(\A^3_{\kk})\mid f=(P_1(x,y,z), P_2(x,y,z),z) \textrm{ with } P_1,P_2\in \kk[x,y,z]\}
\end{align*}
of all automorphisms of $\A^3_{\kk}$ that fix the last coordinate. 
\end{notation}

\medskip

Let $\kk\subseteq\KK$ be a field extension and let $f\in G_{\kk,z}$. Then, for each $q\in\KK$, we can define an automorphism $f|q$ of $\A^2_\KK=\Spec(\KK[x,y])$ by setting
\[f|q\colon(x,y)\mapsto(P_1(x,y,q), P_2(x,y,q)).\] 
We remark that $\Jac(f|q)=\Jac(f)\in\kk^*$.

\medskip

\begin{lemma}\label{Lemm:Psiq}Let $q\in\C\setminus\R$. Then, the map \[\begin{array}{rcc}
\Psi_q\colon G_{\R,z}&\to& \Aut(\A^2_\C),\\
f&\mapsto & f|q\end{array}\]
is a group homomorphism
whose image consists of all elements of $\Aut(\A^2_\C)$ that have a real Jacobian determinant.
\end{lemma}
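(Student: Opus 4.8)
The plan is to verify the homomorphism property directly and then split the image statement into the easy inclusion and the surjectivity, treating the latter via a generating-set argument. First I would check that $\Psi_q$ is a group homomorphism: substitution $z\mapsto q$ is a ring homomorphism $\R[z]\to\C$, so evaluating a composition $f\circ g$ at $z=q$ agrees with composing $f|q$ and $g|q$; equivalently, $\Psi_q$ is the base-change map along $\R[z]\to \R[z]/(m_q)\cong\C$, where $m_q\in\R[z]$ is the (quadratic) minimal polynomial of $q$. The inclusion $\Psi_q(G_{\R,z})\subseteq\{h\in\Aut(\A^2_\C)\mid \Jac(h)\in\R^*\}$ is then immediate from the remark preceding the statement, namely $\Jac(f|q)=\Jac(f)\in\R^*$ for every $f\in G_{\R,z}$. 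Writing $W=\{h\in\Aut(\A^2_\C)\mid\Jac(h)\in\R^*\}$, this $W$ is a subgroup (indeed normal), being the preimage of $\R^*\trianglelefteq\C^*$ under the multiplicative character $\Jac\colon\Aut(\A^2_\C)\to\C^*$. Since $\Psi_q(G_{\R,z})$ is a subgroup contained in $W$, it then suffices to exhibit a generating set of $W$ inside the image.

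For the generating set I would invoke the Jung--van der Kulk Theorem (Theorem~\ref{JungvdK}) in the form that $\Aut(\A^2_\C)$ is generated by $\GL_2(\C)$, the translations $T\cong(\C^2,+)$, and the elementary automorphisms $E=\{(x,y+s(x))\mid s\in\C[x]\}$, using that every triangular map is affine composed with such an elementary map. Set $N=\langle\,\GL_2(\C)\cap W,\;T,\;E\,\rangle\subseteq W$. A short computation shows that conjugation by a scalar $(\lambda x,y)$ maps $E$, $T$ and $\GL_2(\C)\cap W$ into themselves, so the diagonal subgroup $D=\{(\lambda x,y)\mid\lambda\in\C^*\}$ normalises $N$; combined with $\GL_2(\C)=D\cdot \mathrm{SL}_2(\C)$ and $\mathrm{SL}_2(\C)\subseteq \GL_2(\C)\cap W\subseteq N$, this gives $\Aut(\A^2_\C)=\langle D,N\rangle=D\cdot N$. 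Then for $h\in W$ I write $h=d\,n$ with $d\in D$, $n\in N$; as $n,h\in W$ we get $d\in D\cap W=\{(\lambda x,y)\mid\lambda\in\R^*\}\subseteq N$, whence $h\in N$. This proves $W=N$, reducing the problem to lifting the three families $\GL_2(\C)\cap W$, $T$ and $E$.

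Finally I would lift each family to $G_{\R,z}$. Because $q\notin\R$, the evaluation $\R[z]\to\C,\ z\mapsto q$, is surjective (indeed $\{1,q\}$ is an $\R$-basis of $\C$), so every $s\in\C[x]$ equals $S(x,q)$ for some $S\in\R[x,z]$ and every $b\in\C$ equals $B(q)$ for some $B\in\R[z]$; this lifts the elementary maps to $(x,y+S(x,z),z)$ and the translations to $(x+B_1(z),y+B_2(z),z)$, both lying in $G_{\R,z}$ and mapping to the prescribed generator under $\Psi_q$. The delicate family is $\GL_2(\C)\cap W$: given $A\in\GL_2(\C)$ with $\det A=\delta\in\R^*$, I factor $A=\mathrm{diag}(\delta,1)\,A_0$ with $A_0\in\mathrm{SL}_2(\C)$, write $A_0$ as a product of transvections over $\C$, and lift each transvection entrywise through $z\mapsto q$ to a transvection over $\R[z]$; the resulting $M(z)\in\mathrm{SL}_2(\R[z])$ satisfies $M(q)=A_0$, and $M_\delta(z)=\mathrm{diag}(\delta,1)M(z)$ lies in $\GL_2(\R[z])$ with $\det M_\delta(z)=\delta\in\R^*=(\R[z])^\times$, so the linear automorphism $f\in G_{\R,z}$ with coefficient matrix $M_\delta(z)$ is a genuine $\R[z]$-automorphism satisfying $f|q=A$. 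The crux of the whole argument is exactly this last step: an $\R[z]$-linear automorphism is forced to have unit, hence constant real, determinant, so $A$ cannot be lifted by a naive coefficientwise lift of its entries; the resolution is that $\R[z]$ is Euclidean, whence $\mathrm{SL}_2(\R[z])$ is generated by transvections that lift while keeping the determinant identically $1$. Together with the inclusion $\Psi_q(G_{\R,z})\subseteq W$, this yields $\Psi_q(G_{\R,z})=W$, as claimed.
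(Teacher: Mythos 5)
Your proof is correct, and it shares the paper's essential ingredients---the Jung--van der Kulk theorem, the surjectivity of the evaluation $\R[z]\to\C$, $z\mapsto q$ (which is what lets you lift elementary maps, translations and transvections coefficientwise), and a real diagonal map to absorb the real determinant---but it organises the reduction differently. The paper first lifts every element of $\Aff_2(\C)\cup\BA_2(\C)$ of Jacobian determinant one, then takes an arbitrary $f$ with $\Jac(f)\in\R^*$, writes it as a word $\alpha_1\circ\cdots\circ\alpha_{n+1}$ in these two factors, and intercalates diagonal maps $\gamma_i=(a_ix,b_iy)$ into the word so that all but the last factor have Jacobian one, the last being handled by the real scaling $(\nu x,y)$. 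You instead prove the clean group-theoretic statement that $W=\{h\mid\Jac(h)\in\R^*\}$ equals $N=\langle\GL_2(\C)\cap W,\,T,\,E\rangle$, by showing $D=\{(\lambda x,y)\}$ normalises $N$, deducing $\Aut(\A^2_\C)=D\cdot N$, and then cutting down with $D\cap W\subseteq N$; this replaces the paper's word surgery with a one-line coset argument and isolates exactly which generators need lifting. Your route is arguably tidier and avoids tracking a telescoping product; the paper's is more hands-on and stays entirely inside the amalgamated-product formalism. One small remark: your appeal to $\R[z]$ being Euclidean and to $\mathrm{SL}_2(\R[z])$ being generated by transvections is superfluous---you only use that $\mathrm{SL}_2(\C)$ is generated by transvections and that each individual transvection lifts through $z\mapsto q$ to a transvection over $\R[z]$, which already forces the lifted product to lie in $\mathrm{SL}_2(\R[z])$. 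The determinant point you flag as the ``crux'' (that a naive entrywise lift of $A$ would not have unit determinant over $\R[z]$) is real and is handled correctly by your transvection factorisation; the paper sidesteps it by only ever lifting $\mathrm{SL}_2(\C)$ via its standard generators $\left(\begin{smallmatrix}0&-1\\1&\phantom{-}0\end{smallmatrix}\right)$ and unipotents.
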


\begin{proof}
By construction, $\Psi_q$ is a group homomorphism and $\Jac(\Psi_q(f))=\Jac(f)\in\R^*$ for all $f\in G_{\R,z}$. So, we only need to prove that every element $f$ in $\Aut(\A^2_\C)$ with real Jacobian determinant is indeed in the image of $\Psi_q$.

{\bf (A)} We prove that any element of $\Aff_2(\C)\cup\BA_2(\C)$ of Jacobian determinant one is in $\Psi_q(G_{\R,q})$.

Suppose first that $f$ is an elementary triangular map of the form $f=(x,y+\xi x^n)$ for some integer $n\ge 0$ and some constant $\xi\in \C$. Since $q$ is not real, there exist $s,t\in \R$ such that $sq+t=\xi$ and we then have $f=\Psi_q(g)$ where $g\in G_{\R,z}$ is defined by $g=(x,y+(sz+t)x^n,z)$. Since $\Psi_q$ is a group homomorphism, this implies that all triangular maps of the form $(x,y+p(x))$ with $p\in \C[x]$ also belong to $\Psi_q(G_{\R,q})$. 

We now consider affine maps. We have already proven that $(x,y+\lambda x)\in \Psi_q(G_{\R,q})$ for each $\lambda\in \C$.
Let us write $\sigma=(-y,x)=\Psi_q((-y,x,z)).$ As $\mathrm{SL}_2(\C)$ is generated by $\left(\begin{smallmatrix} 0 & -1 \\ 1 & 0 \end{smallmatrix}\right)$ and by $\left\{\left(\begin{smallmatrix} 1 & 0 \\ \lambda & 1 \end{smallmatrix}\right)\mid \lambda\in \C\right\}$, we can infer that every element $(ax+by,cx+dy)$ with $\left(\begin{smallmatrix} a & b \\ c & d \end{smallmatrix}\right)\in \mathrm{SL}_2(\C)$ belongs to $\Psi_q(G_{\R,q})$. As the translations are generated by $\sigma$ and by $(x,y+\nu)$ with $\nu \in \C$, every element of $\Aff_{2}(\C)$ of Jacobian determinant one lies in $\Psi_q(G_{\R,q})$. With the above, we can deduce that any element of $\BA_2(\C)$ of Jacobian determinant one is also in $\Psi_q(G_{\R,q})$, as it is of the form $(ax+b, \frac{1}{a}y+p(x))$ with $a \in \C^\ast$, $b\in \C$ and $p \in \C[x]$. This shows the claim.

{\bf (B)} Let $f\in\Aut(\A^2_\C)$ be such that $\Jac(f)\in\R^*$. By the Jung--van der Kulk theorem (Theorem~\ref{JungvdK}),  we can write $f$ as a product $\alpha_1\circ\alpha_2\circ\cdots\circ\alpha_{n+1}$, where $\alpha_1,\ldots,\alpha_{n+1}$ are elements of $\Aff_2(\C)\cup\BA_2(\C)$. We may change this expression by choosing some elements $\gamma_i=(a_ix,b_iy)\in\Aff_2(\C)\cap\BA_2(\C)$ and intercalating $\gamma_i\circ\gamma_i^{-1}$ between $\alpha_i$ and $\alpha_{i+1}$ as follows.
\begin{align*}
f&=\alpha_1\circ\alpha_2\circ\cdots\circ\alpha_i\circ\alpha_{i+1}\circ\cdots\circ\alpha_{n+1}\\
&=\alpha_1\circ(\gamma_1\circ\gamma_1^{-1})\circ\alpha_2\circ\cdots\circ\alpha_i\circ(\gamma_i\circ\gamma_i^{-1})\circ\alpha_{i+1}\circ\cdots\circ\alpha_{n+1}\\
&=(\alpha_1\circ\gamma_1)\circ(\gamma_1^{-1}\circ\alpha_2\circ\gamma_2)\circ\cdots\circ(\gamma_{i-1}^{-1}\circ\alpha_i\circ\gamma_i)\circ\cdots\circ(\gamma_n^{-1}\circ\alpha_{n+1})\\
&=\beta_1\circ\beta_2\circ\cdots\circ\beta_i\circ\cdots\circ\beta_{n+1},
\end{align*}
where $\beta_1=\alpha_1\circ\gamma_1$, $\beta_{n+1}=\gamma_n^{-1}\circ\alpha_{n+1}$ and $\beta_i=\gamma_{i-1}^{-1}\circ\alpha_i\circ\gamma_i$ for all $2\leq i\leq n$.

By choosing successively suitable elements $\gamma_1, \gamma_2,\ldots,\gamma_n$ as above, we can then arrange that $\Jac(\beta_{i})=1$ for all $1\leq i\leq n$. Hence, we can assume that $\beta_i\in\Psi_q(G_{\R,q})$ for all $i\leq n$ and it just remains to prove that $\beta_{n+1}$ belongs to $\Psi_q(G_{\R,q})$, too.

Since $\Jac(f)\in\R^*$ and $\Jac(\beta_i)=1$ for all $i\leq n$, we have that $\Jac(\beta_{n+1})=\Jac(f)\in\R^*$. Finally, we consider the map $\gamma_{n+1}=(\nu x,y)=\Psi_q((\nu x,y,z))$, where $\nu=\Jac(\beta_{n+1})$. This allows us to conclude the proof, because $\beta_{n+1}=(\beta_{n+1}\circ\gamma_{n+1}^{-1})\circ\gamma_{n+1}$ is in $\Psi_q(G_{\R,q})$ since $\beta_{n+1}\circ\gamma_{n+1}^{-1}$ is of Jacobian one.
\end{proof}

\begin{lemma}\label{Lem:vReal} The following propositions hold true.
\begin{enumerate}[leftmargin=*]
\item\label{vReal1}
Let $v\in \C[x,y]$ be a variable and let $q\in \C\setminus\R$. Then, there exists $f=(P_1(x,y,z),P_2(x,y,z),z)\in G_{\R,z}$ such that $v=P_1(x,y,q)$.
\item\label{vReal2}
Let $v\in \R[x,y]\subset\C[x,y]$ be a variable of $\C[x,y]$ and let $q\in\R$. Then, there exists $f=(P_1(x,y,z),P_2(x,y,z),z)\in G_{\R,z}$ such that $v=P_1(x,y,q)$.\end{enumerate}
\end{lemma}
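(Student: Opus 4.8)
The plan is to produce, in each case, an automorphism of $\A^2_\C$ whose first component is $v$ and then lift it to $G_{\R,z}$ through the maps studied above. For \ref{vReal1}, since $v$ is a variable of $\C[x,y]$, fix $w\in\C[x,y]$ with $g=(v,w)\in\Aut(\A^2_\C)$ and put $\lambda=\Jac(g)\in\C^*$. Composing with $(x,\lambda^{-1}y)$ gives $g'=(v,\lambda^{-1}w)\in\Aut(\A^2_\C)$ with $\Jac(g')=1\in\R^*$. As $q\in\C\setminus\R$, Lemma~\ref{Lemm:Psiq} identifies the image of $\Psi_q$ with the automorphisms of $\A^2_\C$ of real Jacobian determinant, so $g'=\Psi_q(f)=f|q$ for some $f=(P_1,P_2,z)\in G_{\R,z}$, and reading off the first component yields $P_1(x,y,q)=v$.

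For \ref{vReal2} the key observation is that, since $q\in\R$, evaluating any $f=(P_1,P_2,z)\in G_{\R,z}$ at $z=q$ produces $f|q=(P_1(x,y,q),P_2(x,y,q))\in\Aut(\A^2_\R)$, and conversely every real automorphism $(Q_1,Q_2)\in\Aut(\A^2_\R)$ is realised by the $z$-independent lift $(Q_1,Q_2,z)\in G_{\R,z}$. Hence the existence of the desired $f$ is equivalent to the statement that $v$, which is a variable of $\C[x,y]$ and lies in $\R[x,y]$, is already a variable of $\R[x,y]$: granting this, we choose $w\in\R[x,y]$ with $(v,w)\in\Aut(\A^2_\R)$ and take $f=(v,w,z)$. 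Note that here $\Psi_q$ is of no help, since it was set up only for $q\in\C\setminus\R$, so the content of \ref{vReal2} is exactly this arithmetic descent.

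To establish the descent I would argue cohomologically, in the spirit of the rest of the paper. Let $U=\{h\in\Aut(\A^2_\C)\mid h^*(x)=x\}=\{(x,ay+b(x))\mid a\in\C^*,\,b\in\C[x]\}$, a $\Gal(\C/\R)$-invariant subgroup. Fixing $g_0=(v,w_0)\in\Aut(\A^2_\C)$ and using that $v$ is real, both $g_0$ and $\overline{g_0}$ have first component $v$, so $T=\overline{g_0}\circ g_0^{-1}$ fixes $x$ and satisfies $T\circ\overline{T}=\id$, i.e.\ $T\in Z^1(U)$. A real automorphism with first component $v$ exists precisely when $T$ represents the trivial class, so it remains to show $H^1(U)=\{1\}$. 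This I would deduce from the split extension $1\to(\C[x],+)\to U\to\C^*\to1$: the image of $T$ in $\C^*$ is a cocycle, hence trivial since $H^1(\C^*)=\{1\}$, so after modifying $T$ by a coboundary coming from $(x,\gamma y)$ one may assume $T\in(\C[x],+)$, and then $T$ itself is trivial because $H^1((\C[x],+))=\{1\}$ (Lemma~\ref{Lem:H1CCstar}). The main obstacle is precisely this descent step: one must verify that the reality of $v$ really does force $T$ into $U$, that the cocycle condition is preserved under the reduction, and that the two standard vanishing results can be chained correctly through the semidirect product structure of $U$ rather than naively for the factors.
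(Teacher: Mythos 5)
Your proof is correct. Part \ref{vReal1} is essentially identical to the paper's argument: normalise the Jacobian of a completion $(v,w)$ of $v$ to $1$ and invoke the surjectivity statement of Lemma~\ref{Lemm:Psiq}. For part \ref{vReal2} you correctly isolate the real content, namely that a polynomial $v\in\R[x,y]$ which is a variable of $\C[x,y]$ is already a variable of $\R[x,y]$; the paper simply cites this as a known consequence of a theorem of van den Essen and van Rossum, whereas you prove it from scratch by Galois descent. Your descent argument holds up under scrutiny: since $v$ is real, $T=\overline{g_0}\circ g_0^{-1}$ does land in the stabiliser $U=\{(x,ay+b(x))\mid a\in\C^*,\ b\in\C[x]\}$ of the first coordinate, it is a $1$-cocycle, and the vanishing $H^1(U)=\{1\}$ follows from the split extension $1\to(\C[x],+)\to U\to\C^*\to 1$ together with Lemma~\ref{Lem:H1CCstar}\ref{H1CCstar} (first kill the $\C^*$-component of $T$ by conjugating with $(x,\gamma y)$, then kill the remaining additive part); triviality of the class of $T$ then produces $h=\overline{\alpha}\circ g_0=(v,w)$ with $\overline{h}=h$, hence $w\in\R[x,y]$. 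The trade-off is clear: the paper's route is shorter and rests on a more general statement about variables under arbitrary field extensions in characteristic zero, while yours is self-contained, specific to $\R\subset\C$, and fits naturally into the cohomological machinery already set up in the paper. Either is acceptable; if you keep your version, state explicitly that you only need the implication ``$T$ trivial $\Rightarrow$ real completion exists'' (your ``precisely when'' is not used in the reverse direction).
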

\begin{proof}

\ref{vReal1} Suppose that $v\in \C[x,y]$ is a variable. Let $w\in \C[x,y]$ be such that $\varphi=(v,w)$ is an automorphism of $\A^2_\C$. Replacing $w$ with $\xi w$ for some $\xi\in \C^*$, we may assume that $\mathrm{Jac}(\varphi)=1$. Then, for any $q\in \C\setminus\R$, there exist by Lemma~\ref{Lemm:Psiq} polynomials $P_1,P_2\in \R[x,y,z]$ such that the automorphism 
  \[f=(P_1(x,y,z),P_2(x,y,z),z)\in G_{\R,z}\] satisfies $f|q=\varphi$. In particular, $v=P_1(x,y,q)$ as desired.
  
\ref{vReal2} It is a well-known fact that a polynomial $v\in\R[x,y]$ is a variable of $\R[x,y]$ if and only if it is a variable of $\C[x,y]$. For instance, this is an immediate consequence of \cite[Theorem 3.2]{Essen-Rossum}. Hence, there exists $w\in\R[x,y]$ such that $(v,w)\in\Aut(\A^2_{\R})$ and the result follows. Indeed, the map $f=(P_1(x,y,z),P_2(x,y,z),z)\in G_{\R,z}$ with $P_1(x,y,z)=v(x,y)$ and $P_2(x,y,z)=w(x,y)$ satisfies $v=P_1(x,y,q)$ for any $q\in\R$.
\end{proof}

\begin{proposition}\label{AutR} Consider the standard real structure $\rho\colon (x,y,z)\mapsto (\overline{x},\overline{y},\overline{z})$ on $\A^3_\C$.
Then, \[H^1(G_{\C,z})=\{1\}.\]
Consequently, every real structure $\hat\rho$ on $\A^3_\C$ that makes the following diagram commutative
\[
			\xymatrix@R=0.5cm@C=1.5cm{
			\A^3_\C\ar@{->}[r]^{\hat\rho}\ar@{->}[d]^{\pi}& \A^3_\C\ar@{->}[d]^{\pi}\\
			\A^1_\C \ar@{->}[r]^{z\mapsto \overline{z}}& \A^1_\C,
			}
			\]	
is equivalent to the standard real structure $\rho$.
\end{proposition}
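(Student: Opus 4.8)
The plan is to translate the triviality of $H^1(G_{\C,z})$ into the statement that every real structure on $\A^3_\C$ over the base $\A^1_\C$ admits a system of real coordinates, and to produce such coordinates by first solving the problem generically over the function field and then globalising with the help of Lemmas~\ref{Lemm:Psiq} and~\ref{Lem:vReal}. Concretely, a $1$-cocycle is an element $\nu\in G_{\C,z}$ with $\nu\circ\overline{\nu}=\id$, and proving $\nu\sim\id$ amounts to finding $\alpha=(A,B,z)\in G_{\C,z}$ with $\nu=\alpha\circ\overline{\alpha}^{-1}$; writing $\hat\rho=\nu\circ\rho$, this says precisely that $(A,B,z)$ is a coordinate system whose components are fixed by the twisted conjugation attached to $\hat\rho$, together with the already $\hat\rho$-invariant coordinate $z$. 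The final ``consequently'' follows at once, since any $\hat\rho$ commuting with $\pi$ as in the diagram is of the form $\nu\circ\rho$ with $\nu=\hat\rho\circ\rho\in G_{\C,z}$ a $1$-cocycle, so that $H^1(G_{\C,z})=\{1\}$ yields $\hat\rho\sim\rho$.

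First I would normalise the Jacobian. Since $G_{\C,z}=\Aut_{\C[z]}(\A^2_{\C[z]})$ and the units of $\C[z]$ are the nonzero constants, $\Jac(\nu)=J\in\C^*$; the cocycle relation forces $J\overline{J}=1$, so $|J|=1$, and conjugating $\nu$ by a suitable linear map $(ax,y,z)$ (exactly as in the proof of $H^1(\C^*)=\{1\}$ in Lemma~\ref{Lem:H1CCstar}\ref{H1CCstar}) reduces to $\Jac(\nu)=1$. Next I would pass to the generic fibre, i.e.\ base-change to the extension $\C(z)/\R(z)$, which is Galois with group $\Gal(\C/\R)$ acting by conjugating coefficients and fixing $z$. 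Over the field $K=\C(z)$ the Jung--van der Kulk Theorem~\ref{JungvdK} applies, so $\Aut(\A^2_K)=\Aff_2(K)\Asterisk_{\cap}\BA_2(K)$ and one has the cocartesian square in cohomology used in Lemma~\ref{Lemm:H1B-new}; the $H^1$ of the two factors and of their intersection are governed by $H^1(\Gal,\GL_2(K))$, $H^1(\Gal,K^{+})$ and $H^1(\Gal,K^{*})$, all of which vanish by Hilbert~90 for $\C(z)/\R(z)$ in its additive and multiplicative forms. This gives $H^1(\Aut(\A^2_K))=\{1\}$, so the generic fibre $\nu_\eta$ of $\nu$ is a coboundary: there is $\alpha_\eta\in\Aut_{K}(\A^2_K)$ with $\nu_\eta=\alpha_\eta\circ\overline{\alpha_\eta}^{-1}$.

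The main obstacle is the last step: promoting the generic trivialisation $\alpha_\eta$, whose coordinates lie in $\C(z)$, to an honest element $\alpha\in G_{\C,z}=\Aut_{\C[z]}(\A^2_{\C[z]})$ depending polynomially on $z$. This cannot be automatic, because the Jung--van der Kulk structure fails over the ring $\C[z]$ (there exist wild automorphisms), so generic triviality does not formally descend. This is exactly where Lemmas~\ref{Lem:vReal} and~\ref{Lemm:Psiq} enter: the first component $A\in K[x,y]$ of $\alpha_\eta$ is a variable of $\A^2_K$, and I would use Lemma~\ref{Lem:vReal}\ref{vReal1} (together with the realisation of real variables in~\ref{vReal2}) to replace it, up to equivalence of cocycles, by a variable defined over $\C[z]$, while Lemma~\ref{Lemm:Psiq} controls the Jacobian so that the chosen lift is genuinely a $\C[z]$-automorphism rather than merely a $\C(z)$-one. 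Carrying this out for both coordinates yields the desired $\hat\rho$-real coordinate system $(A,B,z)$ over $\C[z]$, completing the argument. I expect the care needed to clear denominators while preserving both the variable property and the cocycle-equivalence class---rather than the generic computation itself---to be the delicate heart of the proof.
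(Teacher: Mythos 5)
Your set-up and first reduction match the paper: a $1$-cocycle is an element $\nu\in G_{\C,z}$ with $\nu\circ\overline{\nu}=\id$, and viewing $G_{\C,z}$ inside $\Aut(\A^2_K)$ for $K=\C(z)$, the triviality of $H^1(\Aut_K(K[x,y]))$ (this is exactly Kambayashi's Theorem~3, which the paper cites rather than rederives via the amalgamated square) produces $\alpha_\eta\in\Aut_K(\A^2_K)$ with $\nu=\alpha_\eta^{-1}\circ\overline{\alpha_\eta}$. You also correctly identify that the entire difficulty is the globalisation of $\alpha_\eta$ from $\C(z)$ to $\C[z]$. But that is precisely the step you do not carry out, and the sketch you offer for it would not work as stated. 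Lemma~\ref{Lem:vReal} does not ``replace a variable of $\A^2_K$ by a variable defined over $\C[z]$'': it takes a variable $v$ of $\C[x,y]$ (a polynomial in two variables over $\C$, not over $K$) and a single point $q$, and realises $v$ as the specialisation at $q$ of the first component of an automorphism in $G_{\R,z}$. Likewise Lemma~\ref{Lemm:Psiq} describes the image of the specialisation map $G_{\R,z}\to\Aut(\A^2_\C)$ at one non-real $q$; it does not ``control the Jacobian so that the lift is a $\C[z]$-automorphism''. Neither lemma, applied to the components of $\alpha_\eta$ themselves, converts a $\C(z)$-trivialisation into a $\C[z]$-one.

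The paper's actual mechanism is an induction on the degree of the product $a_1a_2$ of the denominators of $f^{-1}$ (after clearing denominators in $f$ itself by a real map $(c(z)x,c(z)y,z)$). For each root $q$ of $a_1$, one studies the image $\Delta_{f,q}=f(\A^2_\C\times\{q\})$ of the exceptional fibre, using the identity $h_1(f_1(x,y,q),f_2(x,y,q),q)=0$. If $\Delta_{f,q}$ is a point one translates and divides by $z-q$ (or $(z-q)(z-\overline{q})$). If not, one invokes Furter's theorem that $h_1(x,y,q)$ lies in $\C[v]$ for some variable $v$ of $\C[x,y]$, concludes $\Delta_{f,q}=\Gamma_{q,w}$ for $w=v-\xi_i$, arranges $w\in\R[x,y]$ when $q\in\R$ (this needs $H^1(\C^*)=\{1\}$), and only then applies Lemma~\ref{Lem:vReal} to straighten $\Gamma_{q,w}$ to $\{x=0\}$ by a \emph{real} element of $G_{\R,z}$ --- realness being essential so that the relation $\nu=f^{-1}\circ\overline{f}$ survives the modification --- after which $z-q$ divides $f_1$ and the denominator drops in degree. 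None of this case analysis, the use of Furter's result, or the mechanism for preserving the cocycle relation appears in your proposal; since you yourself flag this as ``the delicate heart of the proof'', the argument as written has a genuine gap exactly there.
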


\begin{proof}Let $\nu\in Z^1(G_{\C,z})$ be a $1$-cocycle, that is, an element $\nu\in G_{\C,z}$ such that $\nu\circ \overline{\nu}=\mathrm{id}_{\A^3_{\C}}$. We need to show that there exists $f\in G_{\C,z}$ such that $\nu=f^{-1}\circ\overline{f}$. 

Consider $G_{\C,z}$ as a subgroup of $\Aut(\A^2_K)$, where $K=\C(z)$ and $\A^2_K=\Spec(K[x,y])$. Since $H^1(\Aut_K(K[x,y]))=1$ by \cite[Theorem~3]{Kambayashi}, there is an element $f\in \Aut_{\C(z)}(\A^2_{\C(z)})$ such that $\nu=f^{-1}\circ \overline{f}$. In other words, there exist $f_1,f_2,g_1,g_2\in \C(z)[x,y]$ such that 
 \[\begin{array}{cccc}
f \colon &  \A^3_\C & \dasharrow &  \A^3_\C\\
&(x,y,z)&\mapsto & (f_1(x,y,z),f_2(x,y,z),z)
\end{array}\]
and 
\[\begin{array}{cccc}
g=f^{-1}\colon &  \A^3_\C & \dasharrow &  \A^3_\C\\
&(x,y,z)&\mapsto & (g_1(x,y,z),g_2(x,y,z),z),\\
\end{array}\]
are inverse birational maps and $\nu=f^{-1}\circ \overline{f}$.

We may actually assume that $f_1$ and $f_2$ are both elements of $\C[x,y,z]$. Indeed, there exists $c\in \R[z]\setminus \{0\}$ such that $c(z)f_1(x,y,z)$ and $c(z)f_2(x,y,z)$ belong to $\C[x,y,z]$, and the equality $\nu=f^{-1}\circ \overline{f}$ remains true when we replace $f$ with $\gamma\circ f$, where $\gamma\in \Bir(\A^3_\C)$ is defined by $(x,y,z)\mapsto (c(z)x,c(z)y,z)$, because $\overline{\gamma}=\gamma$.

Let us write $g_i=\frac{h_i}{a_i}$ for each $i=1,2$, where $h_i\in \C[x,y,z]$ and $a_i\in \C[z]\setminus \{0\}$ are without common factors. 

If $\deg(a_1\cdot a_2)=0$, i.e., if $a_1$ and $a_2$ are nonzero constants, then $g$ is a morphism too. In this case, $f$ is in $G_{\C,z}$ and we are done. If $\deg(a_1\cdot a_2)\geq1$, we proceed by decreasing induction on $\deg(a_1\cdot a_2)$. To prove the proposition, it suffices to find a suitable birational map $\varphi\in \Bir(\A^3_\C)$ with the following four properties:
\begin{enumerate}[leftmargin=*]
\item\label{pty1} $\varphi^*(z)=z$;
\item\label{pty2} all components of $\varphi\circ f$ are in $\C[x,y,z]$;
\item\label{pty3} $\varphi=\overline{\varphi}$, which implies $\nu=(\varphi\circ f)^{-1}\circ \overline{(\varphi\circ f)}$;
\item\label{pty4} the degree of the product of the denominators appearing in the components of $(\varphi\circ f)^{-1}$ is strictly smaller than that of $a_1\cdot a_2$. 
\end{enumerate}

\medskip
So, suppose from now on that $\deg(a_1 \cdot a_2)\geq1$ and let $q\in\C$ be such that $a_1(q)a_2(q)=0$. 
Without loss of generality, we may assume that $a_1(q)=0$. Since $g\circ f=\mathrm{id}_{\A^3_{\C}}$, we then obtain that 
\[a_1(z)x=h_1(f_1(x,y,z),f_2(x,y,z),z)\] and thus that the equality
\[\label{Eqhjf}h_1(f_1(x,y,q),f_2(x,y,q),q)=0\tag{$\spadesuit$}\]
holds in $\C[x,y]$.

For each $p\in \A^1_\C$, we consider the set $\Delta_{f,p}\subset \A^3_\C$ defined by \[\Delta_{f,p}=\{(f_1(x,y,p),f_2(x,y,p),p)\mid (x,y)\in \A^2_\C\}=f(\A^2_\C\times \{p\}).\]
We remark that applying the complex conjugation to the set $\Delta_{f,p}$ gives 
\[\label{EqDelta}\overline{\Delta_{f,p}}=\Delta_{f,\overline{p}}\tag{$\heartsuit$}\]
for all $p\in\C$. Indeed, as $\nu=f^{-1}\circ \overline{f}$, we have $f\circ \nu=\overline{f}=\rho\circ f \circ \rho$ and therefore
\[\overline{\Delta_{f,p}}=\rho(\Delta_{f,p})=\rho\circ f(\A^2_\C\times \{p\})=f\circ \nu\circ \rho(\A^2_\C\times \{p\})=f(\A_\C^2\times \{\overline{p}\})=\Delta_{f,\overline{p}}.\]

We shall prove later that if $\Delta_{f,q}$, with $a_1(q)=0$ as above, is not a point, then it isomorphic to $\A^1_{\C}$.

\medskip

{\bf (A)} Let us first consider the case where the set $\Delta_{f,q}$ is a point. Then, there exist $r_1,r_2\in\C$ such that $\Delta_{f,q}=\{(r_1,r_2,q)\}$ and $R_1,R_2\in\C[x,y,z]$ such that 
\[f_i(x,y,z)=r_i+(z-q)R_i(x,y,z)\]
for both $i=1, 2$.

{\bf (A1)} Suppose now that $q\in \R$. By the equality \eqref{EqDelta}, we then have that $r_1,r_2\in \R$. 
Therefore, the birational map 
\[\varphi=\left(\frac{x-r_1}{z-q}, \frac{y-r_2}{z-q}, z\right)\in \Bir(\A^3_\C)\] 
satisfies $\varphi=\overline{\varphi}$ and we compute  
\[\varphi\circ f=(R_1(x,y,z), R_2(x,y,z), z).\]
The inverse map of $\varphi\circ f$ is given by 
\begin{align*}(\varphi\circ f)^{-1}&=\left(\frac{h_1(x(z-q)+r_1,y(z-q)+r_2,z)}{a_1(z)}, \right.\\
&\qquad\left.\frac{h_2(x(z-q)+r_1,y(z-q)+r_2,z)}{a_2(z)}, z\right)\\
&=\left(\frac{\widetilde{h}_1(x,y,z)}{a_1(z)}, \frac{\widetilde{h}_2(x,y,z)}{a_2(z)}, z\right),
\end{align*}
where $\widetilde{h}_1,\widetilde{h}_2\in\C[x,y,z]$. We obtain that
\[\widetilde{h}_1(x,y,q)=h_1(r_1,r_2,q)=h_1(f_1(x,y,q),f_2(x,y,q),q)\stackrel{\eqref{Eqhjf}}{=}0.\] 
Therefore, $\widetilde{h}_1(x,y,z)$ is divisible by $(z-q)$ and the map $\varphi$ fulfils the four desired properties \ref{pty1}--\ref{pty4}.

{\bf (A2)} We now consider the case where $q\not\in \R$. For each $i=1,2$, we define two real numbers $s_i=\frac{\overline{r_i}-r_i}{\overline{q}-q}$ and $t_i=\frac{\overline{q}r_i-q\overline{r_i}}{\overline{q}-q}$. Then, the polynomials $p_i(z)=s_iz+t_i\in\R[z]$ satisfy that $p_i(q)=r_i$ and $p_i(\overline{q})=\overline{r_i}$. We recall that the equality $f_i(x,y,q)=r_i$ holds true in $\C[x,y]$. Similarly, it follows from~\eqref{EqDelta}, that $f_i(x,y,\overline{q})=\overline{r_i}$. Therefore the polynomials $f_i(x,y,z)-s_iz-t_i$ are divisible by $(z-q)(z-\overline{q})\in\R[z]$. 
This implies that the birational map 
\[\varphi=\left(\frac{x-s_1z-t_1}{(z-q)(z-\overline{q})}, \frac{y-s_2z-t_2}{(z-q)(z-\overline{q})}, z\right)\] 
satisfies $\varphi=\overline{\varphi}$ and that all components of $\varphi\circ f$ are elements of $\C[x,y,z]$.
Moreover, the inverse map of $\varphi\circ f$ is then given by 
\begin{align*}(\varphi\circ f)^{-1}&=\left(\frac{h_1(x(z-q)(z-\overline{q})+s_1z+t_1,y(z-q)(z-\overline{q})+s_2z+t_2,z)}{a_1(z)},\right.\\ 
&\quad\left.\frac{h_2(x(z-q)(z-\overline{q})+s_1z+t_1, y(z-q)(z-\overline{q})+s_2z+t_2,z)}{a_2(z)}, z\right)\\
&=\left(\frac{\widetilde{h}_1(x,y,z)}{a_1(z)}, \frac{\widetilde{h}_2(x,y,z)}{a_2(z)}, z\right),
\end{align*}
where $\widetilde{h}_1,\widetilde{h}_2\in\C[x,y,z]$. We obtain the two equalities
\[\widetilde{h}_1(x,y,q)=h_1(r_1,r_2,q)=h_1(f_1(x,y,q),f_2(x,y,q),q)\stackrel{\eqref{Eqhjf}}{=}0,\]
\[\widetilde{h}_1(x,y,\overline{q})=h_1(\overline{r_1},\overline{r_2},\overline{q})=h_1(f_1(x,y,q),f_2(x,y,q),q)\stackrel{\eqref{Eqhjf}}{=}0.\]

Therefore, $\widetilde{h}_1(x,y,z)$ is divisible by $(z-q)(z-\overline{q})$ and the map $\varphi$ fulfils the four desired properties \ref{pty1}--\ref{pty4}.

\medskip
{\bf (B)} We now proceed with the case where $\Delta_{f,q}$ is not a point. For every $p\in \A^1_\C$ and every variable $u\in \C[x,y]$, we define the curve 
\[\Gamma_{p,u}=\{(x,y,p)\in \A^3_\C\mid u(x,y)=0\}\simeq \A^1_\C.\]

Consider the polynomial $h_1(x,y,q)\in\C[x,y]$. It is an element of the closure of the set of all variables of $\C[x,y]$  in the ind-topology (see \cite{Furter}). By \cite{Furter} (see also \cite[Corollary 16.7.5]{Furter-Kraft}), there exists a variable $v\in \C[x,y]$ such that $h_1(x,y,q)\in \C[v]$. Note that $h_1(x,y,q)$ is not the zero-polynomial because $h_1$ and $a_1$ were chosen without common factors. Setting $\mu\in\C^*$, $m\geq1$ and $\xi_1,\ldots,\xi_m\in\C$ such that 
\[h_1(x,y,q)=\mu\prod_{i=1}^m(v(x,y)-\xi_i)\in\C[x,y],\]
it then follows from \eqref{Eqhjf} that there exists $1\leq i \leq m$ such that the equality 
\[v(f_1(x,y,q),f_2(x,y,q))-\xi_i=0\] 
holds true in $\C[x,y]$. Therefore, the set $\Delta_{f,q}$ is contained in the curve $\Gamma_{q,w}$, where $w=v-\xi_i$. As a nonconstant morphism $\A^2_\C\to \A^1_\C$ is surjective, and since $\Delta_{f,q}$ is not a point, this implies that \[\Delta_{f,q}=\Gamma_{q,w}\simeq\A^1_{\C}.\]
\par We now prove that we can assume that $w\in \R[x,y]$ if $q\in \R$, so that we may apply Lemma~\ref{Lem:vReal}. Indeed, suppose $q\in\R$. In this case, we have $\overline{\Delta_{f,q}}\stackrel{\eqref{EqDelta}}{=}\Delta_{f,\overline{q}}=\Delta_{f,q}$. Thus, $\Gamma_{q,w}=\Gamma_{q,\overline{w}}$. As both polynomials $w$ and $\overline{w}$ are variables, they are irreducible. Since their zero-sets are equal, there exists a constant $\mu\in \C^*$ such that $\overline{w}=\mu w$. It then follows that $w=\overline{\mu}\,\overline{w}=\overline{\mu} \, \overline{w}=\mu \overline{\mu}w$, whence $\mu \overline{\mu}=1$. As $H^1(\C^*)=\{1\}$ by Lemma~\ref{Lem:H1CCstar}, we may choose   $\eta\in \C^*$ with $\eta/\overline{\eta}=\mu$. The variable $w'=\eta w$ then satisfies $\overline{w'}=\overline{\eta}\,\overline{w}=\overline{\eta}\mu w=w'$ and $\Delta_{f,q}=\Gamma_{q,w'}$. Thus, we may replace $w$ by $w'\in\R[x,y]$ if necessary, as desired.

By Lemma~\ref{Lem:vReal}, there exists an element $\psi=(P_1(x,y,z), P_2(x,y,z), z)$ in $G_{\R,z}$ such that $P_1(x,y,q)=w$. Observe that $\psi(\Gamma_{q,w})\subseteq \Gamma_{q,x}$. As these two curves are isomorphic to $\A^1_\C$, they are actually equal, i.e., $ \psi(\Gamma_{q,w})= \Gamma_{q,x}$. We may thus replace $f$ with $ \psi\circ f$ and suppose that $\Delta_{f,q}=\Gamma_{q,x}$. Note that, as $\psi\in G_{\R,z}$ is defined over $\R$ and is an automorphism, the equality $\nu=f^{-1}\circ \overline{f}$ is preserved when replacing $f$ with $\psi\circ f$, and we do not change the denominators $a_1,a_2$ appearing in the expression of the inverse of $f$. Moreover, the fact that $\Delta_{f,q}=\Gamma_{q,x}$ implies that $f_1(x,y,q)=0$, or, equivalently, that $z-q$ divides $f_1$ in $\C[x,y,z]$. We note that in the case where $q\not\in \R$, we also have $\Delta_{f,\overline{q}}=\overline{\Delta_{f,q}}=\Gamma_{\overline{q},x}$, and so $(z-\overline{q})$ also divides $f_1$. Defining $u(z)=z-q$ if $q\in \R$ and $u(z)=(z-q)(z-\overline{q})$ if $q\not\in \R$, we thus get a polynomial $u\in \R[z]$ with $u(q)=0$ that divides $f_1$.

Finally, since the birational map $\varphi\in \Bir(\A^3_\C)$ defined by $\varphi\colon (x,y,z)\mapsto (\frac{x}{u(z)},y,z)$ satisfies the four properties \ref{pty1}--\ref{pty4}, we can conclude the proof.
\end{proof}

\begin{corollary}\label{coro:AutCzxy}
Taking the standard action of $\mathrm{Gal}(\C/\R)$ on  $\C[x,y,z]$, we obtain 
\[H^1(\Aut_{\C[z]}(\C[x,y,z]))=\{1\}.\]
\end{corollary}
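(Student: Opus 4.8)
The plan is to obtain this corollary formally from Proposition~\ref{AutR}, which already establishes $H^1(G_{\C,z})=\{1\}$, by transporting the computation along the anti-equivalence between the scheme $\A^3_\C$ and its coordinate ring. First I would record that assigning to an automorphism $f\in G_{\C,z}$ its comorphism $f^*$ defines a bijection $\Phi\colon G_{\C,z}\to\Aut_{\C[z]}(\C[x,y,z])$: a $\C$-algebra automorphism of $\C[x,y,z]$ fixes $z$ precisely when the corresponding automorphism of $\A^3_\C$ does, so its image lands in the $\C[z]$-automorphisms, and every $\C[z]$-automorphism arises this way. Since $(f\circ g)^*=g^*\circ f^*$, the map $\Phi$ is an anti-isomorphism of groups. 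Moreover it is equivariant for the two Galois actions: the standard real structure $\rho$ on $\A^3_\C$ corresponds to coefficientwise complex conjugation on $\C[x,y,z]$, and unwinding the definitions gives $\Phi(\overline{f})=\overline{\Phi(f)}$ for every $f\in G_{\C,z}$. This is exactly the compatibility underlying the isomorphism $H^1(\Aut_\C(X))\iso H^1(\Aut_\C(\C[X]))$ recorded earlier, here restricted to the $z$-fixing subgroups.

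Next I would invoke the general fact that such a $\mathrm{Gal}(\C/\R)$-equivariant anti-isomorphism induces a bijection of cohomology pointed sets. Concretely, if $\nu\circ\overline{\nu}=1$, then applying $\Phi$ and using $\Phi(\overline{\nu})=\overline{\Phi(\nu)}$ yields $\overline{\Phi(\nu)}\circ\Phi(\nu)=1$, hence $\Phi(\nu)\circ\overline{\Phi(\nu)}=1$, so $\Phi$ carries $Z^1(G_{\C,z})$ into $Z^1(\Aut_{\C[z]}(\C[x,y,z]))$. If moreover $\tau=\alpha^{-1}\circ\nu\circ\overline{\alpha}$, then setting $\beta=\overline{\Phi(\alpha)}^{-1}$ one checks that $\Phi(\tau)=\beta^{-1}\circ\Phi(\nu)\circ\overline{\beta}$, so $\Phi$ sends equivalent cocycles to equivalent cocycles. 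As $\Phi$ is bijective and $\Phi(\id)=\id$, it descends to a bijection of pointed sets $H^1(G_{\C,z})\iso H^1(\Aut_{\C[z]}(\C[x,y,z]))$.

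Finally, combining this bijection with Proposition~\ref{AutR}, which gives $H^1(G_{\C,z})=\{1\}$, yields $H^1(\Aut_{\C[z]}(\C[x,y,z]))=\{1\}$, as claimed.

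I expect the only point requiring care to be the equivariance $\Phi(\overline{f})=\overline{\Phi(f)}$, i.e., checking that the anti-isomorphism $f\mapsto f^*$ intertwines the real structure on $\A^3_\C$ with coefficientwise conjugation on $\C[x,y,z]$; this is a routine unwinding of the two definitions of the $\mathrm{Gal}(\C/\R)$-action and is already implicit in the remark cited above, so no genuine obstacle arises. Everything else is a purely formal consequence of Proposition~\ref{AutR}.
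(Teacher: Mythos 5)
Your proof is correct and follows essentially the same route as the paper: both deduce the corollary from Proposition~\ref{AutR} by transporting cocycles along the comorphism correspondence between $G_{\C,z}$ and $\Aut_{\C[z]}(\C[x,y,z])$. The only cosmetic difference is that the paper uses the genuine group isomorphism $f\mapsto (f^{-1})^*$, whereas you work directly with the anti-isomorphism $f\mapsto f^*$ and verify by hand that an equivariant anti-isomorphism induces a bijection on $H^1$; your verification of that step is sound.
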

\begin{proof}
The map $f\mapsto (f^{-1})^*$ defines an isomorphism between the groups $G_{\C,z}$ and $\Aut_{\C[z]}(\C[x,y,z])$. As the action of $\mathrm{Gal}(\C/\R)$ on both groups is compatible with this isomorphism, $H^1(\Aut_{\C[z]}(\C[x,y,z]))=\{1\}$ then follows from $H^1(G_{\C,z})=\{1\}$. 
\end{proof}
\begin{lemma}\label{Lemm:GrH1}
Let $r\ge 1$ and let 
\[G_r=\{f\in \Aut_{\C[z]}(\C[x,y,z])\mid f\equiv \mathrm{id} \pmod{z^r}\}.\]
Taking the standard action of $\mathrm{Gal}(\C/\R)$ on  $\C[x,y,z]$, we obtain 
\[H^1(G_r)=\{1\}.\]
\end{lemma}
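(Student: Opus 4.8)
The plan is to exploit the $z$-adic filtration of $G_r$ by the congruence subgroups
\[
G_s=\{f\in\Aut_{\C[z]}(\C[x,y,z])\mid f\equiv\mathrm{id}\pmod{z^s}\},\qquad s\ge r,
\]
and to trivialise a given cocycle one power of $z$ at a time. A cocycle $\nu\in Z^1(G_r)$ is an element with $\nu\circ\overline\nu=\mathrm{id}$, and I must produce $\alpha\in G_r$ with $\alpha^{-1}\circ\nu\circ\overline\alpha=\mathrm{id}$. Each $G_s$ is normal in $G_r$, being the kernel of the reduction homomorphism $G_r\to\Aut_{\C[z]/(z^s)}\bigl((\C[z]/(z^s))[x,y]\bigr)$, and $\bigcap_{s\ge r}G_s=\{\mathrm{id}\}$.

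First I would analyse the graded pieces. Writing $f\in G_s$ as $f=(x+z^sa,\,y+z^sb,\,z)$, the assignment $f\mapsto(a(x,y,0),b(x,y,0))$ is a group homomorphism $G_s\to\C[x,y]^2$ whose kernel is exactly $G_{s+1}$, since leading coefficients add under composition. Its image $W_s$ is a $\C$-linear subspace of $\C[x,y]^2$ (in fact the divergence-free fields, forced by $\Jac(f)\in\C^\ast$), and the Galois action induced by $\rho$ is coefficient-wise complex conjugation, which is semilinear. Hence, exactly as in Lemma~\ref{Lem:H1CCstar}\ref{H1CCstar} (replace a cocycle $w$ with $w+\overline w=0$ by the coboundary attached to $\tfrac12 w$), one gets $H^1(G_s/G_{s+1})=H^1(W_s)=\{1\}$.

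With this in hand I would run a successive approximation. Given $\nu\in Z^1(G_r)$, its image in $G_r/G_{r+1}=W_r$ is a cocycle, hence a coboundary; choosing $\alpha_r\in G_r$ lifting the corresponding element, the conjugate $\nu_1=\alpha_r^{-1}\circ\nu\circ\overline{\alpha_r}$ lies in $G_{r+1}$. Iterating inside $G_{r+1},G_{r+2},\dots$ yields $\alpha_s\in G_s$ with $(\alpha_r\circ\cdots\circ\alpha_{r+k-1})^{-1}\circ\nu\circ\overline{(\alpha_r\circ\cdots\circ\alpha_{r+k-1})}\in G_{r+k}$. Structurally, the same input gives a clean finite d\'evissage: repeated use of the exact sequence of pointed sets attached to $1\to G_t/G_{t+1}\to G_r/G_{t+1}\to G_r/G_t\to1$, together with $H^1(W_t)=\{1\}$ and the implication ``$H^1(N)=H^1(G/N)=\{1\}\Rightarrow H^1(G)=\{1\}$'', proves $H^1(G_r/G_s)=\{1\}$ for every \emph{finite} $s\ge r$.

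The main obstacle is the passage to the limit. The partial products $A_k=\alpha_r\circ\cdots\circ\alpha_{r+k-1}$ are $z$-adically Cauchy, and their limit $\alpha$ formally satisfies $\alpha^{-1}\circ\nu\circ\overline\alpha=\mathrm{id}$; but a priori $\alpha$ lies only in the completion $\widehat{G_r}=\varprojlim_s G_r/G_s$ and may involve power series, not polynomials, in $z$ (composition makes the $z$-degrees of the $A_k$ grow). So the delicate point is a descent statement: a cocycle of $G_r$ that becomes a coboundary in $\widehat{G_r}$ should already be one in $G_r$. I expect to handle this either through a degree bound forcing the construction to terminate, or by combining the formal trivialisation with the \emph{polynomial} trivialisation coming from $H^1(\Aut_{\C[z]}(\C[x,y,z]))=\{1\}$ (Corollary~\ref{coro:AutCzxy}), in the spirit of the denominator-clearing argument in the proof of Proposition~\ref{AutR}; matching the two so that the correcting automorphism is simultaneously real and polynomial is the crux of the argument.
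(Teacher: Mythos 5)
There is a genuine gap, and you have located it yourself: your scheme only trivialises the cocycle in the $z$-adic completion $\varprojlim_s G_r/G_s$, and you offer no actual mechanism for descending from a formal trivialisation to a polynomial one. Neither of your two suggested fixes is carried out, and the first (a degree bound forcing termination) is not plausible: the correcting elements $\alpha_s$ have $z$-order at least $s$, the partial products' degrees grow, and nothing forces the residual cocycle to become exactly the identity after finitely many steps. So what you have is a correct computation of $H^1(G_r/G_s)$ for finite $s$ (the graded pieces $W_s$ are $\Q$-subspaces of $\C[x,y]^2$ stable under conjugation, so $H^1(W_s)=\{1\}$ as you say, and the d\'evissage through the finite quotients is fine), but not a proof of the lemma.

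The paper closes exactly this gap with an external input you do not mention. It argues by a \emph{finite} induction on $r$: given $\nu\in Z^1(G_r)$, the inductive hypothesis $H^1(G_{r-1})=\{1\}$ (base case $r=1$ from Corollary~\ref{coro:AutCzxy}, i.e.\ $H^1(\Aut_{\C[z]}(\C[x,y,z]))=\{1\}$) yields $\alpha\in G_{r-1}$ with $\nu=\alpha^{-1}\circ\overline{\alpha}$, and the cocycle condition forces $\alpha\equiv(x+z^{r-1}a,\,y+z^{r-1}b)\pmod{z^r}$ with $a,b\in\R[x,y]$. The crux is then to produce a \emph{real polynomial} automorphism $\varphi$ with $\varphi\equiv\alpha\pmod{z^r}$, so that $\beta=\varphi^{-1}\circ\alpha\in G_r$ still satisfies $\nu=\beta^{-1}\circ\overline{\beta}$. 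For $r=1$ one simply evaluates the components of $\alpha$ at $z=0$; for $r\ge 2$ the truncation of $\alpha$ is an automorphism of $(\R[z]/(z^r))[x,y]$ with Jacobian $1$, and the existence of a lift $\varphi\in\Aut_{\R[z]}(\R[z][x,y])$ is precisely the main theorem of \cite{EMV} (lifting of automorphisms in two variables over $R[z]/(z^r)$ with liftable Jacobian). This lifting theorem is the descent statement your argument is missing; it is a substantial result, not something recoverable from the filtration alone. Your second suggested route (combining the formal trivialisation with Corollary~\ref{coro:AutCzxy}) points in the right direction, but without the \cite{EMV} lifting --- or an equivalent statement that an automorphism prescribed modulo $z^r$ over $\R$ can be realised by a genuine real polynomial automorphism --- the proof does not close.
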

\begin{proof}
{\bf (A)} 
We first prove the result in the case where $r=1$. Let $\nu\in Z^1(G_1)$. By Corollary~\ref{coro:AutCzxy}, there exists $\alpha\in\Aut_{\C[z]}(\C[x,y,z])$ such that $\nu=\alpha^{-1}\circ\overline{\alpha}$. Since $\alpha\circ\nu=\overline{\alpha}$ and since $\nu\equiv\mathrm{id}$ $\pmod{z}$, we have that $\alpha\equiv\overline{\alpha}\pmod{z}$.

Denoting $\alpha(x)=a(x,y,z)$ and $\alpha(y)=b(x,y,z)$, we can define an automorphism $\varphi\in\Aut_{\C[z]}(\C[x,y,z])$ by letting $\varphi(x)=a(x,y,0)$ and $\varphi(y)=b(x,y,0)$. Note that $\varphi\equiv\alpha\pmod{z}$ and that $\overline{\varphi}=\varphi$.

Thus, $\beta=\varphi^{-1}\circ\alpha$ defines an element in $G_1$ and we check that
\[\beta^{-1}\circ \overline{\beta}=\alpha^{-1}\circ\varphi\circ\overline{\varphi^{-1}}\circ\overline{\alpha}=\alpha^{-1}\circ\varphi\circ\varphi^{-1}\circ\overline{\alpha}=\alpha^{-1}\circ\overline{\alpha}=\nu.\] Hence, $H^1(G_1)=\{1\}$ is proved.

{\bf (B)} We prove the lemma for every $r\geq2$ by induction on $r$. We fix $r\geq2$ and suppose that $H^1(G_{r-1})=\{1\}$ holds. Let $\nu\in Z^1(G_r)$. We want to find an element $\beta\in G_{r}$ such that $\nu=\beta^{-1}\circ \overline{\beta}$. By our induction hypothesis, there exists $\alpha\in G_{r-1}$ such that $\nu=\alpha^{-1}\circ \overline{\alpha}$.

 Now, it suffices to construct an element $\varphi\in\Aut_{\C[z]}(\C[x,y,z])$ with $\varphi=\overline{\varphi}$ such that $\varphi\equiv\alpha\pmod{z^r}$. The lemma will indeed follow since the automorphism $\beta=\varphi^{-1}\circ\alpha$ is then in $G_r$ and satisfies 
\[\beta^{-1}\circ \overline{\beta}=\alpha^{-1}\circ\varphi\circ\overline{\varphi^{-1}}\circ\overline{\alpha}=\alpha^{-1}\circ\varphi\circ\varphi^{-1}\circ\overline{\alpha}=\alpha^{-1}\circ\overline{\alpha}=\nu,\] as desired.

Let $a,b\in\mathbb{C}[x,y]$ be such that $\alpha(x)\equiv x+z^{r-1}a(x,y)$ and $\alpha(y)\equiv y+z^{r-1}b(x,y)\pmod{z^r}$. Since $\alpha\circ\nu=\overline{\alpha}$ and $\nu\equiv\mathrm{id} \pmod{z^r}$, we have that $a(x,y)$ and $b(x,y)$ both belong to $\mathbb{R}[x,y]$. Therefore, $\alpha$ induces an endomorphism $\tilde{\alpha}\in \End_{\R[z]/(z^r)}(\R[z]/(z^r)[x,y])$ defined by $\tilde{\alpha}(x)=x+z^{r-1}a(x,y)$ and $\tilde{\alpha}(y)= y+z^{r-1}b(x,y)$. In fact, $\tilde{\alpha}$ is an isomorphism. Indeed, one can check that its inverse map is simply defined by $\tilde{\alpha}^{-1}(x)=x-z^{r-1}a(x,y)$ and $\tilde{\alpha}^{-1}(y)= y-z^{r-1}b(x,y)$. Moreover, the Jacobian determinant of $\tilde{\alpha}$ is equal to $1\in\R[z]/(z^r)$ because  $\tilde\alpha\equiv\alpha\pmod{z^r}$ and  $\Jac(\alpha)=1\in \R[z]$.

 By the main result of \cite{EMV}, there thus exists $\varphi\in\Aut_{\R[z]}(\R[z][x,y])$ with $\varphi(x)\equiv x+z^{r-1}a(x,y)\equiv \alpha(x)$ and $\varphi(y)\equiv y+z^{r-1}b(x,y)\equiv \alpha(y)\pmod{z^r}$. This concludes the proof.
\end{proof}

\subsection{Real forms of Koras-Russell threefolds of the first kind}\label{section:KR}

The Koras-Russell threefolds of the first kind are the hypersurfaces $X_{d,k,\ell}$ in $\A^4_{\C}$ defined by an equation of the form $x^dy+z^k+x+t^\ell=0$, where $d\geq2$ and $2\leq k<\ell$ are integers with $k$ and $\ell$ relatively prime. Their automorphism groups are computed in \cite{DMJP, Moser-Jauslin}, see also \cite{DMJP-proceedings} where the following notations are introduced. We fix the integers $d,k,\ell$ as above, and denote by $\mathcal{A}\subset \Aut_{\C}(\C[x,z,t])$ the subgroup of all automorphisms of $\C[x,z,t]$ that preserves the ideals $(x)$ and $(x^d,z^k+x+t^\ell)$. For every $1\leq n\leq d$ we further denote  by $\mathcal{A}_n$ the normal subgroup of $\mathcal{A}$ defined by
\[\mathcal{A}_n=\{f\in\mathcal{A} \mid f\equiv \id\mod(x^n)\}\subset\Aut_{\C[x]}(\C[x,z,t]).\]

\begin{proposition}[\cite{Moser-Jauslin}]\label{prop:Automorphisms_KR}\hfill
\begin{enumerate}[leftmargin=*]
\item $\Aut_\C(X_{d,k,\ell}) \simeq\mathcal{A}$. \label{item: auto of threeforld}
\item $\mathcal{A} \simeq\mathcal{A}_1\rtimes \C^*$, where $\C^*$ acts on $\C[x,z,t]$ by $a\cdot P(x,z,t)=P(a^{k\ell}x,a^{\ell}z,a^{k}t)$ for all $a\in\C^*, P\in\C[x,z,t]$. \label{item: semidirect product}
\item $\mathcal{A}_{n+1}$ is a normal subgroup of $\mathcal{A}_{n}$ and $\mathcal{A}_{n}/\mathcal{A}_{n+1}\simeq(\C[z,t],+)$ for all $1\leq n\leq d-1$. \label{item: normal subgroups}
\end{enumerate}
\end{proposition}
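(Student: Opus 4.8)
The plan is to reduce the whole statement to the Makar-Limanov invariant and to an order-by-order study of the $x$-adic filtration, treating the three parts in turn. For part~\ref{item: auto of threeforld}, the decisive input is the computation of the Makar-Limanov invariant $\mathrm{ML}(X_{d,k,\ell})=\C[x]$ by Kaliman and Makar-Limanov \cite{Kaliman-ML}, together with their description of the locally nilpotent derivations of $\C[X_{d,k,\ell}]$. Since any $\phi\in\Aut_\C(X_{d,k,\ell})$ permutes the $\mathbb{G}_a$-actions, it preserves $\C[x]$, and since the common degeneration locus $\{x=0\}$ of all these actions is preserved, $\phi^*(x)\in(x)$, whence $\phi^*(x)=\lambda x$ for some $\lambda\in\C^*$. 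Writing $w=z^k+x+t^\ell$, so that $w=-x^dy$ on $X_{d,k,\ell}$, I would then show that $\phi$ preserves the subring $\C[x,z,t]$ — this is the heart of the argument and rests on the derivation classification — and read off an element $g\in\Aut_\C(\C[x,z,t])$. Preservation of the two ideals is then forced and, conversely, yields the inverse construction: a $g$ preserving $(x)$ and $(x^d,w)$ lifts to $X_{d,k,\ell}$ precisely because $g^*(w)\in(x^d,w)$ makes $\phi^*(y)=-g^*(w)/(\lambda x)^d$ a regular function on $X_{d,k,\ell}$ (here one uses $x^d\C[X_{d,k,\ell}]\cap\C[x,z,t]=(x^d,w)$). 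This yields the isomorphism $\Aut_\C(X_{d,k,\ell})\simeq\mathcal{A}$.

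For part~\ref{item: semidirect product}, I would work inside $\C[x,z,t]$. The weighted action $a\cdot(x,z,t)=(a^{k\ell}x,a^\ell z,a^kt)$ scales $w$ by $a^{k\ell}$, hence preserves $(x)$ and $(x^d,w)$, giving an embedding $\C^*\hookrightarrow\mathcal{A}$, while $\mathcal{A}_1$ is normal by construction. It remains to identify the quotient $\mathcal{A}/\mathcal{A}_1$ with this $\C^*$. Reducing modulo $(x)$ and using $d\ge2$, every element of $\mathcal{A}$ induces an automorphism of $\C[z,t]$ preserving $(z^k+t^\ell)$; since $k<\ell$ are coprime, the curve $\{z^k+t^\ell=0\}$ is quasi-homogeneous with an isolated singularity, and its automorphisms are exactly the weighted scalings $(z,t)\mapsto(\mu^\ell z,\mu^kt)$, the assignment $\mu\mapsto(\mu^\ell,\mu^k)$ being faithful by coprimality. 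Together with $\phi^*(x)=\lambda x$, compatibility with $(x^d,w)$ forces $\lambda=\mu^{k\ell}$, so that $\mathcal{A}/\mathcal{A}_1\cong\C^*$ with the displayed action as section; hence $\mathcal{A}\simeq\mathcal{A}_1\rtimes\C^*$.

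For part~\ref{item: normal subgroups}, fix $1\le n\le d-1$ and $f\in\mathcal{A}_n$, so $f^*(x)=x$ and $f^*(z)\equiv z+x^na_0$, $f^*(t)\equiv t+x^nb_0\pmod{x^{n+1}}$ with $a_0,b_0\in\C[z,t]$. The assignment $f\mapsto(a_0,b_0)\bmod\mathcal{A}_{n+1}$ is a homomorphism into an additive group with kernel exactly $\mathcal{A}_{n+1}$, the cross terms in a composition being of order $x^{n+1}$. The two constraints on $(a_0,b_0)$ are the crux. Since $\Jac(f)=1$, the order-$x^n$ term of $\Jac(f)-1$ vanishes, giving $\partial_z a_0+\partial_tb_0=0$, so that $(a_0,b_0)=(\partial_th,-\partial_zh)$ is the Hamiltonian field of some $h\in\C[z,t]$; and $f^*(w)\in(x^d,w)$ forces, at order $x^n$, that $\{u,h\}\in(u)$, where $u=z^k+t^\ell$ and $\{u,h\}=\partial_zu\,\partial_th-\partial_tu\,\partial_zh$ is the standard Poisson bracket. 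As $u$ is irreducible, $\{u,h\}\in(u)$ means that $h$ is constant on $\{u=0\}$, i.e.\ $h\in\C+(u)$; writing $h=ug$ with $g\in\C[z,t]$, the added constant being irrelevant to $(a_0,b_0)$, identifies the image with $(\C[z,t],+)$ via $f\mapsto g$. Surjectivity is obtained by realizing each $g$ as the leading term of an automorphism and correcting higher-order terms order-by-order exactly as in the proof of Lemma~\ref{Lemm:GrH1} via \cite{EMV}; the relevant admissibility $\{u,ug\}=u\{u,g\}\in(u)$ holds automatically.

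The main obstacle is part~\ref{item: auto of threeforld}: proving that every automorphism of $X_{d,k,\ell}$ descends to an automorphism of $\C[x,z,t]$ preserving the two ideals. This relies essentially on the Makar-Limanov invariant and on the full classification of the locally nilpotent derivations of $\C[X_{d,k,\ell}]$. Once $\Aut_\C(X_{d,k,\ell})\simeq\mathcal{A}$ is in hand, parts~\ref{item: semidirect product} and~\ref{item: normal subgroups} reduce to the filtration bookkeeping above, the only genuinely computational point being the identification of the graded pieces through the Jacobian and Poisson-bracket conditions.
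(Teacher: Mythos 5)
First, a point of reference: the paper does not prove this proposition at all --- it is imported verbatim from \cite{Moser-Jauslin} and \cite{DMJP} --- so there is no in-paper argument to compare yours against. Your reconstruction does follow the strategy of those works, and the ``necessity'' halves are sound: deducing $\phi^*(x)=\lambda x$ from $\mathrm{ML}(X_{d,k,\ell})=\C[x]$ plus the unique singular fibre, the reduction mod $(x)$ to the stabiliser of the cusp $\{z^k+t^\ell=0\}$ in part~\ref{item: semidirect product}, and the computation in part~\ref{item: normal subgroups} that $\Jac(f)=1$ forces $(a_0,b_0)=(\partial_t h,-\partial_z h)$ while $f^*(w)\in(x^d,w)$ forces $\{u,h\}\in(u)$, hence $h\in\C+(u)$, are all correct and correctly identify $\mathcal{A}_n/\mathcal{A}_{n+1}$ with a \emph{subgroup} of $(\C[z,t],+)$.

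Two steps remain genuinely open. (i) In part~\ref{item: auto of threeforld}, the assertion that every $\phi\in\Aut_\C(X_{d,k,\ell})$ stabilises the subring $\C[x,z,t]\subset\C[X_{d,k,\ell}]$ is the crux, and you only attribute it to ``the derivation classification'': knowing $\phi^*(x)=\lambda x$ does not yet single out $\C[x,z,t]$ inside $\C[X_{d,k,\ell}]\subset\C[x,x^{-1},z,t]$, and an intrinsic characterisation of this subring (via the degree functions attached to the locally nilpotent derivations of \cite{Kaliman-ML}) is needed and not supplied. (ii) In part~\ref{item: normal subgroups}, surjectivity onto $\C[z,t]$ is the hard direction and your EMV-plus-correction scheme does not close as stated. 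Any correcting automorphism at an intermediate order $x^m$ with $n<m<d$ that fixes $x$ must itself have a Hamiltonian leading term $(\partial_t h_m,-\partial_z h_m)$, so its effect on the order-$x^m$ obstruction to $f^*(w)\in(x^d,w)$ is $\{u,h_m\}$ modulo $(u)$. But the image of $h\mapsto\{u,h\}$ in $\C[z,t]/(u)$ lies in $(z^{k-1},t^{\ell-1})/(u)$, a proper subspace missing the constants since $k,\ell\geq 2$; one must therefore \emph{prove} that the successive obstructions land in this subspace, which uses the precise shape $w=z^k+t^\ell+x$ and the recursion determining the cofactor $B$ of $w$. Your closing remark that ``the relevant admissibility $\{u,ug\}=u\{u,g\}\in(u)$ holds automatically'' only treats the leading order $x^n$; it is exactly the orders $x^{n+1},\ldots,x^{d-1}$ that carry the nontrivial content of \cite{Moser-Jauslin}.
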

Moreover, the above isomorphisms are compatible with the natural action of $\Gal(\C/\R)$ on $\C^*$ and on the polynomial rings $\C[z,t]\subset\C[x,z,t]\subset\C[x,x^{-1},y,z,t]$, where we see the ring $\C[X_{d,k,\ell}]$ of regular functions on $X_{d,k,\ell}$ as the subalgebra of $\C[x,x^{-1},z,t]$ that is generated by $x$, $z$, $t$ and $y=-(z^k+x+t^\ell)/x^d$. We may now prove Theorem~\ref{ThmB}.

\begin{proof}[Proof of Theorem~\ref{ThmB}]
By Proposition~\ref{prop:Automorphisms_KR}\ref{item: normal subgroups}, we have a subnormal series \[\{1\} \triangleleft\mathcal{A}_d \triangleleft \mathcal{A}_{d-1} \triangleleft \cdots \triangleleft \mathcal{A}_1 \triangleleft \mathcal{A},\]
where $\mathcal{A}_n/\mathcal{A}_{n+1} \simeq (\C[z,t],+)$ for each $1\leq n \leq d-1$. We may write the latter isomorphism in the form of a short exact sequence of group homomorphisms
\[ \{1\}\longrightarrow \mathcal{A}_{n+1}\longrightarrow \mathcal{A}_{n}\longrightarrow (\C[z,t],+) \longrightarrow \{1\},\]
that gives rise to a short exact sequence of homomorphisms of pointed sets 
\[ \{1\}\longrightarrow H^1(\mathcal{A}_{n+1})\longrightarrow H^1(\mathcal{A}_{n})\longrightarrow H^1(\C[z,t]) \longrightarrow \{1\}\]
(see for example \cite[Proposition 1.17.]{Borel-Serre}). Observe that 
\[\mathcal{A}_{d}=\{f\in\Aut_{\C[x]}(\C[x,z,t])\mid f\equiv\mathrm{id}\mod (x^d)\},\] for which Lemma~\ref{Lemm:GrH1} implies that the first cohomology pointed set $H^1(\mathcal{A}_d)$ is trivial. Since, by Lemma~\ref{Lem:H1CCstar}, $H^1(\C[z,t])$ is trivial, $H^1(\mathcal{A}_{d-1})$ is, too. By repeating the same argument, we  see that the triviality of $H^1(\mathcal{A}_{n+1})$ implies that of $H^1(\mathcal{A}_{n})$. Hence, we successively find that all cohomology pointed sets $H^1(\mathcal{A}_d),\ldots, H^1(\mathcal{A}_1)$ are trivial.

Now, by Proposition~\ref{prop:Automorphisms_KR}\ref{item: semidirect product}, we again obtain a short exact sequence of group homomorphisms
\[ \{1\}\longrightarrow \mathcal{A}_{1}\longrightarrow \mathcal{A}\longrightarrow \C^* \longrightarrow \{1\},\] and thus a short exact sequence of homomorphisms of pointed sets 
\[ \{1\}\longrightarrow H^1(\mathcal{A}_{1})\longrightarrow H^1(\mathcal{A})\longrightarrow H^1(\C^*) \longrightarrow \{1\}.\]
As $H^1(\mathcal{A}_1)=\{1\}$ by the preceding argument, and since $H^1(\C^*)$ is trivial by Lemma~\ref{Lem:H1CCstar}, we can deduce that $H^1(\mathcal{A})$ is trivial. 

Therefore, $H^1(\Aut_{\C}(X_{d,k,\ell}))$ is also trivial and we obtain that all real forms of $X_{d,k,\ell}$ are isomorphic to the standard one.
\end{proof}

\bibliographystyle{alpha}
\bibliography{biblio}

\begin{thebibliography}{vdEMV07}

\bibitem[BD11]{BD}
J\'{e}r\'{e}my Blanc and Adrien Dubouloz.
\newblock Automorphisms of {$\Bbb{A}^{1}$}-fibered affine surfaces.
\newblock {\em Trans. Amer. Math. Soc.}, 363(11):5887--5924, 2011.

\bibitem[Ben16]{benzerga2016structures}
Mohamed Benzerga.
\newblock {\em Structures r{\'e}elles sur les surfaces rationnelles}.
\newblock PhD thesis, Angers, 2016.

\bibitem[Bot21]{Bot}
Anna Bot.
\newblock A smooth complex rational affine surface with uncountably many real
  forms.
\newblock {\em arXiv preprint arXiv:2105.08044}, 2021.

\bibitem[BS64]{Borel-Serre}
A.~Borel and J.-P. Serre.
\newblock Th\'{e}or\`emes de finitude en cohomologie galoisienne.
\newblock {\em Comment. Math. Helv.}, 39:111--164, 1964.

\bibitem[CD94]{Dimca}
A.~D.~R. Choudary and A.~Dimca.
\newblock Complex hypersurfaces diffeomorphic to affine spaces.
\newblock {\em Kodai Math. J.}, 17(2):171--178, 1994.

\bibitem[Dai04]{Daigle}
Daniel Daigle.
\newblock Locally nilpotent derivations and {D}anielewski surfaces.
\newblock {\em Osaka J. Math.}, 41(1):37--80, 2004.

\bibitem[DF18]{Dubouloz-Fasel}
Adrien Dubouloz and Jean Fasel.
\newblock Families of {$\Bbb A^1$}-contractible affine threefolds.
\newblock {\em Algebr. Geom.}, 5(1):1--14, 2018.

\bibitem[DFMJ21]{DFM}
Adrien Dubouloz, Gene Freudenburg, and Lucy Moser-Jauslin.
\newblock Smooth rational affine varieties with infinitely many real forms.
\newblock {\em J. Reine Angew. Math.}, 771:215--226, 2021.

\bibitem[DMJP10]{DMJP}
A.~Dubouloz, L.~Moser-Jauslin, and P.-M. Poloni.
\newblock Inequivalent embeddings of the {K}oras-{R}ussell cubic 3-fold.
\newblock {\em Michigan Math. J.}, 59(3):679--694, 2010.

\bibitem[DMJP14]{DMJP-proceedings}
Adrien Dubouloz, Lucy Moser-Jauslin, and Pierre-Marie Poloni.
\newblock Automorphism groups of certain rational hypersurfaces in complex
  four-space.
\newblock In {\em Automorphisms in birational and affine geometry}, volume~79
  of {\em Springer Proc. Math. Stat.}, pages 301--312. Springer, Cham, 2014.

\bibitem[Dub04]{DuboulozML}
Adrien Dubouloz.
\newblock Completions of normal affine surfaces with a trivial
  {M}akar-{L}imanov invariant.
\newblock {\em Michigan Math. J.}, 52(2):289--308, 2004.

\bibitem[FK18]{Furter-Kraft}
Jean-Philippe Furter and Hanspeter Kraft.
\newblock On the geometry of the automorphism groups of affine varieties.
\newblock {\em arXiv preprint arXiv:1809.04175}, 2018.

\bibitem[FKZ07]{FKZ}
Hubert Flenner, Shulim Kaliman, and Mikhail Zaidenberg.
\newblock Completions of {$\Bbb C^*$}-surfaces.
\newblock In {\em Affine algebraic geometry}, pages 149--201. Osaka Univ.
  Press, Osaka, 2007.

\bibitem[Fur02]{Furter}
Jean-Philippe Furter.
\newblock On the length of polynomial automorphisms of the affine plane.
\newblock {\em Math. Ann.}, 322(2):401--411, 2002.

\bibitem[Giz71]{GizatullinQH}
M.~H. Gizatullin.
\newblock Quasihomogeneous affine surfaces.
\newblock {\em Izv. Akad. Nauk SSSR Ser. Mat.}, 35:1047--1071, 1971.

\bibitem[Kam75]{Kambayashi}
Tatsuji Kambayashi.
\newblock On the absence of nontrivial separable forms of the affine plane.
\newblock {\em J. Algebra}, 35:449--456, 1975.

\bibitem[KL16]{KL16}
Frank Kutzschebauch and Matthias Leuenberger.
\newblock The {L}ie algebra generated by locally nilpotent derivations on a
  {D}anielewski surface.
\newblock {\em Ann. Sc. Norm. Super. Pisa Cl. Sci. (5)}, 15:183--207, 2016.

\bibitem[KML97]{Kaliman-ML}
Shulim Kaliman and Leonid Makar-Limanov.
\newblock On the {R}ussell-{K}oras contractible threefolds.
\newblock {\em J. Algebraic Geom.}, 6(2):247--268, 1997.

\bibitem[MJ11]{Moser-Jauslin}
L.~Moser-Jauslin.
\newblock Automorphism groups of {K}oras-{R}ussell threefolds of the first
  kind.
\newblock In {\em Affine algebraic geometry}, volume~54 of {\em CRM Proc.
  Lecture Notes}, pages 261--270. Amer. Math. Soc., Providence, RI, 2011.

\bibitem[MJT21]{MoserTerpereau}
Lucy Moser-Jauslin and Ronan Terpereau.
\newblock Real structures on horospherical varieties.
\newblock {\em Michigan Math. J.}, (to appear), 2021.

\bibitem[ML90]{ML1}
L.~Makar-Limanov.
\newblock On groups of automorphisms of a class of surfaces.
\newblock {\em Israel J. Math.}, 69(2):250--256, 1990.

\bibitem[ML96]{ML-cubique}
L.~Makar-Limanov.
\newblock On the hypersurface {$x+x^2y+z^2+t^3=0$} in {${\bf C}^4$} or a {${\bf
  C}^3$}-like threefold which is not {${\bf C}^3$}.
\newblock {\em Israel J. Math.}, 96(part B):419--429, 1996.

\bibitem[vdEMV07]{EMV}
Arno van~den Essen, Stefan Maubach, and St\'{e}phane V\'{e}n\'{e}reau.
\newblock The special automorphism group of {$R[t]/(t^m)[x_1,\dots,x_n]$} and
  coordinates of a subring of {$R[t][x_1,\dots,x_n]$}.
\newblock {\em J. Pure Appl. Algebra}, 210(1):141--146, 2007.

\bibitem[vdEvR04]{Essen-Rossum}
Arno van~den Essen and Peter van Rossum.
\newblock Coordinates in two variables over a {$\Bbb Q$}-algebra.
\newblock {\em Trans. Amer. Math. Soc.}, 356(5):1691--1703, 2004.

\end{thebibliography}

\end{document}